\numberwithin{equation}{section}
\theoremstyle{plain}
\newtheorem{theorem}{Theorem}[section]
\newtheorem{lemma}[theorem]{Lemma}
\theoremstyle{definition}
\newtheorem{definition}[theorem]{Definition}
\newtheorem{assumption}[theorem]{Assumption}
\theoremstyle{remark}
\newtheorem{remark}{Remark}[section]
\def\dashint{\operatorname%
{\,\,\text{\bf--}\kern-.98em\DOTSI\intop\ilimits@\!\!}}
\def\bR{\mathbb{R}}
\def\cL{\mathcal{L}}
\def\cB{\mathcal{B}}
\def\cD{\mathcal{D}}
\begin{document}
\title[Green function]{Green functions of conormal derivative problems for stationary Stokes system}

\author[J. Choi]{Jongkeun Choi}
\address[J. Choi]{Division of Applied Mathematics, Brown University, 182 George Street, Providence, RI 02912, USA}
\email{Jongkeun\_Choi@brown.edu}
\thanks{J. Choi was supported by Basic Science Research Program
through the National Research Foundation of Korea (NRF) funded by the Ministry of Education
(2017R1A6A3A03005168)}

\author[H. Dong]{Hongjie Dong}
\address[H. Dong]{Division of Applied Mathematics, Brown University, 182 George Street, Providence, RI 02912, USA}
\email{Hongjie\_Dong@brown.edu}

\thanks{H. Dong was partially supported by the NSF under agreement DMS-1600593}

\author[D. Kim]{Doyoon Kim}
\address[D. Kim]{Department of Mathematics, Korea University, 145 Anam-ro, Seongbuk-gu, Seoul, 02841, Republic of Korea}
\email{doyoon\_kim@korea.ac.kr}

\thanks{D. Kim was supported by Basic Science Research Program through the National Research Foundation of Korea (NRF) funded by the Ministry of Education (2016R1D1A1B03934369)}

\subjclass[2010]{35J08, 35J57, 76N10, 76D07}
\keywords{Green function, Stokes system, measurable coefficients, conormal derivative problem}

\begin{abstract}
We study Green functions for stationary Stokes systems satisfying the conormal derivative boundary condition.
We establish existence, uniqueness, and various estimates for the Green function under the assumption that weak solutions of the Stokes system are continuous in the interior of the domain.
Also, we establish the global pointwise bound for the Green function under the additional assumption that weak solutions of the conormal derivative problem for the Stokes system are locally bounded up to the boundary.
We provide some examples satisfying such continuity and boundedness properties.
\end{abstract}

\maketitle

\section{Introduction}		\label{S1}

In this paper, we are concerned with construction and pointwise estimates for Green functions for stationary Stokes systems with the conormal derivative boundary condition in a bounded domain $\Omega\subset \bR^d$, $d\ge 3$.
Let $\cL$ be a second-order elliptic operator in divergence form
$$
\cL u=D_\alpha(A^{\alpha\beta}D_\beta u)
$$
acting on column vector valued functions $u=(u^1,\ldots,u^d)^{\top}$.
The coefficients $ A^{\alpha\beta}= A^{\alpha\beta}(x)$ are $d\times d$ matrix-valued functions on $\bR^d$ with entries $A_{ij}^{\alpha\beta}$ satisfying the strong ellipticity condition;
i.e., there is a constant $\lambda\in (0,1]$ such that
\begin{equation}		\label{171209@eq1}
|A^{\alpha\beta}|\le \lambda^{-1}, \quad \sum_{\alpha,\beta=1}^dA^{\alpha\beta}\xi_\beta\cdot \xi_\alpha\ge \lambda \sum_{\alpha=1}^d|\xi_\alpha|^2
\end{equation}
for any $\xi_\alpha\in \bR^d$, $\alpha\in \{1,\ldots,d\}$.
By a Green function with conormal derivative boundary condition for the Stokes system, we mean a pair $(G, \Pi)=(G(x,y),\Pi(x,y))$, where $G$ is a $d\times d$ matrix-valued function and $\Pi$ is a $1\times d$ vector-valued function, which is a solution of the conormal derivative problem
$$
\left\{
\begin{aligned}
\operatorname{div} G(\cdot,y)=0 &\quad \text{in }\, \Omega,\\
\cL G(\cdot,y)+\nabla \Pi(\cdot,y)=-\delta_yI+\frac{1}{|\Omega|}I &\quad \text{in }\, \Omega,\\
\cB G(\cdot,y)+{\nu}\Pi(\cdot,y)=0 &\quad \text{on }\, \partial \Omega,
\end{aligned}
\right.
$$
where $\nu=(\nu_1,\ldots,\nu_d)^{\top}$ is the outward unit normal to $\partial \Omega$, $\cB G(\cdot,y)$ is the conormal derivative of $G(\cdot,y)$ on $\partial \Omega$ associated with $\cL$, $\delta_y$ is the Dirac delta function concentrated at $y$, and $I$ is the $d\times d$ identity matrix; see Section \ref{S2} for a more precise definition of the Green function.
The conormal derivative boundary condition arises from the variational principle, and one may consider this type of condition on the output of the channel where the velocity of the flow is a prior unknown when describing a flow through a finite channel. See \cite{MR1972357,
MR3458302} and references therein.

Our focus in this paper is to find minimal regularity assumptions on the coefficients and on the boundary of the domain for the existence of the Green function $(G, \Pi)$ satisfying the pointwise bound
\begin{equation}		\label{171207@eq1}
|G(x,y)|\le C|x-y|^{2-d},   \quad x\neq y.
\end{equation}
We prove that if the Poincar\'e inequality \eqref{170602@eq2} holds in $\Omega$ and weak solutions of the Stokes system
with bounded data are continuous in the interior of the domain,
then the Green function exists and satisfies the pointwise bound \eqref{171207@eq1} away from the boundary of the domain; see Theorem \ref{T1}.
The Green function satisfies the pointwise bound \eqref{171207@eq1}
globally if we further assume that weak solutions of the Stokes system with bounded data are locally bounded up to the boundary; see Theorem \ref{T2}.
For the uniqueness of Green functions, we impose the normalization condition
\begin{equation}		\label{171212@eq1}
\int_\Omega G(x,y)\,dx=0,
\end{equation}
which enables us to construct the Green function in a large class of domains.
One can investigate the Green function with the normalization condition
\begin{equation}		\label{171212@eq2}
\int_{\partial \Omega} G(x,y)\,dx=0
\end{equation}
if the boundary trace of a $W^1_2(\Omega)$ function is well defined.
See \cite{MR3017032} for the Neumann Green functions of elliptic and parabolic systems with the normalization condition \eqref{171212@eq1} and \cite{MR3105752} for that of elliptic systems with the normalization condition \eqref{171212@eq2}.

An easy consequence of our results combined with the $L_q$-estimates for the Stokes system established in \cite{MR3809039} is the following.
If $A^{\alpha\beta}$ are merely measurable in one direction, which may differ depending on the local coordinates, and have small mean oscillations in the other directions (variably partially BMO) and the domain is Reifenberg flat, then the Green function exists and satisfies the global pointwise bound \eqref{171207@eq1}; see Appendix.
We note that Stokes systems with such type of variable coefficients can be used to describe the motion of inhomogeneous fluids with density dependent viscosity and two fluids with interfacial boundaries;
see \cite{arXiv:1604.02690v2, MR3758532} and the references therein.
It also can occur when performing a change of coordinates or when flattening the boundary; see \cite{MR3670039}.

As such, Stokes systems with variable coefficients were discussed in many papers. With regard to the Green function of the Dirichlet problem, we refer the recent papers \cite{MR3693868, MR3670039}.
In \cite{MR3693868}, the authors proved the existence and pointwise estimates of Green functions in a bounded $C^1$ domain when $d\ge 3$ and coefficients have vanishing mean oscillations.
The corresponding results for the fundamental solution and the Green function on a half space were obtained in \cite{MR3670039} when coefficients are merely measurable in one direction and have small mean oscillations in the other directions (partially BMO).
See also \cite{arXiv:1710.05383v2} for the asymptotic behavior of the Green function for Stokes system with oscillating periodic coefficients.
Regarding the regularity theory, we refer the reader to \cite{arXiv:1604.02690v2, MR3758532, MR3809039} for Stokes system with variably partially BMO coefficients, and \cite{MR3693868, MR3614614} for Stokes system with coefficients having small mean oscillations in all directions.
See also \cite{MR3505170} for the homogenization theory of Stokes system with periodic coefficients.
On the other hand, we are unable to find any literature explicitly dealing with the Green function satisfying the conormal derivative boundary condition for Stokes system with variable coefficients.
With respect to the classical Stokes system with the Laplace operator
$$
\Delta u+\nabla p=f,
$$
the Neumann Green function was studied by Maz'ya-Rossmann.
In \cite{MR2182091}, they obtained estimates for the Green function of a mixed boundary value problem (containing the Neumann boundary value problem) on a polyhedral cone in $\bR^3$.
See also \cite{MR3320459} for the Green function of the mixed problem in a two dimensional domain.
Regarding the Green function for the Dirichlet problem, we refer the reader to Maz'ya-Plamensvski\u \i~ \cite{MR0725151}, where the authors proved the existence and the pointwise bound for the Green function in three-dimensional domains of polyhedral type.
For this line of research, see \cite{MR2676181, MR2808700} and the references therein.
We also refer to \cite{MR2763343} for the Green function on a Lipschitz domain in $\bR^2$ or $\bR^3$, and \cite{MR1683625, MR2808162} for the fundamental solution of the Stokes system.

Our argument in establishing the existence of the Green function is based on techniques used in Gr\"uter-Widman \cite{MR0657523} and Hofmann-Kim \cite{MR2341783}, where the authors constructed Green functions for elliptic equations and systems with irregular coefficients.
The key for obtaining the Green function lies in constructing a sequence of approximated Green functions, getting uniform estimates for the sequence, and applying a compactness theorem.
In this paper, to establish the existence of the Green function $(G,\Pi)$,  we refine the techniques for the uniform estimates since the presence of the pressure term $\Pi$ makes the argument more involved.
For the global pointwise bound \eqref{171207@eq1}, we adapt the argument in Kang-Kim \cite{MR2718661}, where the authors proved global pointwise bounds of Green functions for elliptic systems.

In a subsequent paper,
we will study Green functions $(G,\Pi)$ for Stokes systems with measurable coefficients in two dimensional domains.
In this case, $G$ should have a logarithmic growth.
As a matter of fact, our method breaks down and is not applicable in the two dimensional case.

The remainder of this paper is organized as follows.
In Section \ref{S2}, we state our main results along with some notation, assumptions, and the definition of the Green function.
We provide some auxiliary results in Section \ref{S3}.
In Sections \ref{S4} and \ref{S5}, we prove the main theorems.
Finally we provide some applications of our results in Appendix.

\section{Main results}		\label{S2}

Throughout the paper,
we denote by $\Omega$ a bounded domain in the Euclidean space $\bR^d$, $d\ge 3$.
For any $x\in \Omega$ and $r>0$, we write $\Omega_r(x)=\Omega \cap B_r(x)$, where $B_r(x)$ is the usual Euclidean ball of radius $r$ centered at $x$.
For $1\le q\le \infty$, we denote by $W^1_q(\Omega)$ the usual Sobolev space.
We define
$$
\tilde{L}_q(\Omega)=\{u\in L_q(\Omega):(u)_\Omega=0\}, \quad
\tilde{W}^1_{q}(\Omega)=\{u\in W^1_{q}(\Omega): (u)_{\Omega}=0\},
$$
where $(u)_\Omega$ is the average of $u$ in $\Omega$, i.e.,
\[
(u)_\Omega=\dashint_\Omega u\,dx=\frac{1}{|\Omega|}\int_\Omega u\,dx.
\]

Let $\cL$ be an elliptic operator in divergence form
$$
\cL u=D_\alpha(A^{\alpha\beta}D_\beta u),
$$
where the coefficients $A^{\alpha\beta}$ satisfy the strong ellipticity condition \eqref{171209@eq1}.
We denote by $\cB u=A^{\alpha\beta} D_\beta u \nu_\alpha$ the conormal derivative of $u$ on the boundary of $\Omega$ associated with $\cL$.
The adjoint operator $\cL^*$ and the conormal derivative operator $\cB^*$ associated with $\cL^*$ are defined by
$$
\cL^* u=D_\alpha((A^{\beta\alpha})^\top D_\beta u), \quad \cB^* u=(A^{\beta\alpha})^{\top} D_\beta u \,\nu_\alpha.
$$
For $f\in\tilde{L}_{2d/(d+2)}(\Omega)^d$ and $f_\alpha \in L_{2}(\Omega)^d$,  we say that $(u, p)\in W^1_{2}(\Omega)^d\times L_{2}(\Omega)$ is a weak solution of the problem
$$
\left\{
\begin{aligned}
\cL u+\nabla p= f+D_\alpha  f_\alpha \quad &\text{in }\ \Omega,\\
\cB u+p\nu = f_\alpha \nu_\alpha \quad &\text{on }\ \partial \Omega,
\end{aligned}
\right.
$$
if
\[
\int_{\Omega}  A^{\alpha\beta}D_\beta  u\cdot D_\alpha  \phi\ dx+\int_\Omega p\, \operatorname{div}  \phi\ dx=-\int_{\Omega} f\cdot \phi\ dx+\int_\Omega  f_\alpha\cdot D_\alpha \phi \ dx
\]
holds for any $\phi\in W^1_2(\Omega)^d$.
Similarly, we say that $(u, p)\in W^1_{2}(\Omega)^d\times L_{2}(\Omega)$ is a weak solution of the adjoint problem
$$
\left\{
\begin{aligned}
\cL^* u+\nabla p= f+D_\alpha  f_\alpha \quad &\text{in }\ \Omega,\\
\cB^* u+p\nu = f_\alpha \nu_\alpha \quad &\text{on }\ \partial \Omega,
\end{aligned}
\right.
$$
if
\[
\int_{\Omega}  A^{\alpha\beta}D_\beta  \phi \cdot D_\alpha  u\ dx+\int_\Omega p \operatorname{div}  \phi\ dx=-\int_{\Omega} f\cdot \phi\ dx+\int_\Omega  f_\alpha\cdot D_\alpha \phi \ dx
\]
holds for any $\phi\in W^{1}_2(\Omega)^d$.
For $u\in W^{1}_1(\Omega)^d$ and $g\in L_1(\Omega)^d$,
by
$$
\operatorname{div} u=g \quad \text{in }\, \Omega
$$
we mean the equation holds in the almost everywhere sense.

In the definition below, $G=G(x,y)$ is a $d\times d$ matrix-valued function with the entries $G^{ij}:\Omega\times \Omega\to [-\infty,\infty]$, and $\Pi=\Pi(x,y)$ is a ${1\times d}$ vector-valued function with the entries $\Pi^i:\Omega\times \Omega \to [-\infty,\infty]$.

\begin{definition}[Green function]		\label{D1}
We say that $(G,\Pi)$ is a Green function of $\cL$ in a bounded domain $\Omega$ if it satisfies the following properties:
\begin{enumerate}[$(a)$]
\item
For any $y\in \Omega$ and $r>0$,
$$
G(\cdot,y)\in \tilde{W}^1_1(\Omega)^{d\times d}\cap W^1_2(\Omega\setminus B_r(y))^{d\times d},
$$
$$
\Pi(\cdot,y)\in L_1(\Omega)^d\cap L_2(\Omega\setminus B_r(y))^d.
$$
\item
For any $y\in \Omega$, $(G(\cdot,y),\Pi(\cdot,y))$ satisfies
$$
\left\{
\begin{aligned}
\operatorname{div} G(\cdot,y)=0 \quad &\text{in }\, \Omega,\\
\cL G(\cdot,y)+\nabla \Pi(\cdot,y)=-\delta_y I+\frac{1}{|\Omega|}I \quad &\text{in }\ \Omega,\\
\cB G(\cdot,y)+{\nu}\Pi(\cdot,y)=0 \quad&\text{on }\ \partial \Omega,
\end{aligned}
\right.
$$
in the sense that, for any $k\in \{1,\ldots,d\}$ and $\phi\in W^1_\infty(\Omega)^d\cap C(\Omega)^d$, we have
$$
\operatorname{div} G^{\cdot k}(\cdot,y)=0 \quad \text{in }\, \Omega
$$
and
$$
\int_\Omega A^{\alpha\beta}D_\beta G^{\cdot k}(\cdot,y)\cdot D_\alpha \phi\,dx+\int_\Omega \Pi^k(\cdot,y) \operatorname{div}\phi\,dx=\phi^k(y)-\dashint_\Omega \phi^k\,dx,
$$
where $G^{\cdot k}(\cdot,y)$ is the $k$-th column of $G(\cdot,y)$.
\item
Let $f\in \tilde{L}_\infty(\Omega)^d$ and $g\in L_\infty(\Omega)$.
If $(u,p)\in \tilde{W}^1_2(\Omega)^d\times L_2(\Omega)$ is a weak solution of
\begin{equation}		\label{170129@eq3}
\left\{
\begin{aligned}
\operatorname{div}u=g \quad &\text{in }\ \Omega,\\
\cL^* u+\nabla p= f\quad &\text{in }\ \Omega,\\
\cB^*u+p\nu =0 \quad &\text{on }\ \partial \Omega,
\end{aligned}
\right.
\end{equation}
then for a.e. $y\in \Omega$, we have
\begin{equation}		\label{180123@eq1}
u(y)=-\int_\Omega G(x,y)^{\top}f(x)\,dx+\int_{\Omega} \Pi(x,y)^{\top}g(x)\,dx.
\end{equation}
\end{enumerate}
\end{definition}

We remark that the property (c) in the above definition together with the solvability of the conormal derivative problem in $\tilde{W}^1_2(\Omega)^d\times L_2(\Omega)$ (see Lemma \ref{170602@lem2}) gives the uniqueness of a Green function in the sense that, if $(\tilde{G},\tilde{\Pi})$ is another Green function satisfying the above properties, then
for each $\phi \in C_0^\infty(\Omega)^d$ and $\varphi \in C_0^\infty(\Omega)$, we have
$$
\int_\Omega \big( G(x,y)^{\top} - \tilde{G}(x,y)^{\top} \big)  \phi(x) \, dx = \int_\Omega \big( \Pi(x,y)^{\top} - \tilde{\Pi}(x,y)^{\top} \big) \varphi(x) \, dx = 0
$$
for a.e. $y \in \Omega$.
We also note that the definition of the Green function depends on the {\em{normalization condition}}.
In the above definition, the Green function satisfies
$\int_\Omega G(x,y)\,dx=0$.
On the other hand, if $\Omega$ is a Lipschitz domain so that the boundary trace of a $W^1_2(\Omega)$ function is well defined, then one may use the normalization condition $\int_{\partial \Omega}G(x,y)\,d\sigma_x=0$.
Under this condition, the Green function $(G,\Pi)$ can be defined as a solution of the problem
$$
\left\{
\begin{aligned}
\operatorname{div} G(\cdot,y)=0 &\quad \text{in }\, \Omega,\\
\cL G(\cdot,y)+\nabla \Pi(\cdot,y)=-\delta_y I &\quad \text{in }\ \Omega,\\
\cB G(\cdot,y)+{\nu}\Pi(\cdot,y)=-\frac{1}{|\partial\Omega|}I &\quad \text{on }\ \partial \Omega.
\end{aligned}
\right.
$$

We make the following assumptions to construct the Green function for $\cL$ in $\Omega$.

\begin{assumption}		\label{A0}
There exists a constant $K_0>0$ satisfying
\begin{equation}		\label{170602@eq2}
\|\phi\|_{L_{2d/(d-2)}(\Omega)}\le K_0\|D\phi\|_{L_2(\Omega)} \quad \text{for all }\, \phi\in \tilde{W}^1_2(\Omega).
\end{equation}
\end{assumption}

\begin{remark}
By following the argument in \cite[pp. 286--290]{MR2597943}, one can show that
 Assumption \ref{A0} holds when $\Omega$ is an extension domain, in particular, when
$\Omega$ is a Reifenberg flat domain as in Assumption \ref{170219@ass2} $(ii)$; see, for instance, \cite{MR0631089, MR3186805}.
\end{remark}

The following assumption holds, for instance, when the coefficients $A^{\alpha\beta}$ are variably partially BMO; see Appendix.

\begin{assumption}		\label{A1}
There exist constants $R_0\in (0,1]$ and $A_0>0$ such that the following holds:
Let $x_0\in \Omega$ and $0<R<\min\{R_0, \operatorname{dist}(x_0, \partial \Omega)\}$.
If $(u,p)\in W^1_2(B_R(x_0))^d\times L_2(B_R(x_0))$
satisfies
\begin{equation}		\label{171129@eq1}
\left\{
\begin{aligned}
\operatorname{div}u=0 &\quad \text{in }\ B_R(x_0),\\
\cL u+\nabla p=f &\quad \text{in }\ B_R(x_0),
\end{aligned}
\right.
\end{equation}
where $f\in L_\infty(B_R(x_0))^d$, then we have
$u\in C(\overline{B_{R/2}(x_0)})^d$ (in fact, a version of $u$ belongs to $C(\overline{B_{R/2}(x_0)})^d)$ with the estimate
$$
\|u\|_{L_\infty(B_{R/2}(x_0))}\le A_0 \big(R^{-d/2}\|u\|_{L_2(B_R(x_0))}+R^2\|f\|_{L_\infty(B_R(x_0))}\big).
$$
The same statement holds true when $\cL$ is replaced by $\cL^*$.
\end{assumption}

\begin{theorem} 		\label{T1}
Let $d\ge 3$ and $\Omega$ be a bounded domain in $\bR^d$.
Then under Assumptions \ref{A0} and \ref{A1}, there exist Green functions $(G,\Pi)$ of $\cL$ and $(G^*, \Pi^*)$ of $\cL^*$
such that for any $y\in \Omega$, we have
$$
G(\cdot,y),\, G^*(\cdot,y) \in C(\Omega\setminus \{y\})^{d\times d},
$$
and there exists a measure zero set $N_y\subset \Omega$ such that
\begin{equation}		\label{170702@eq2}
G(x,y)=G^*(y,x)^{\top}, \quad G(y,x)=G^*(x,y)^\top \quad \text{for all $x\in \Omega\setminus N_y$}.
\end{equation}
Moreover, for any $x,y\in \Omega$ satisfying
$$
0<|x-y|<\frac{1}{2}\min\{R_0, \operatorname{dist}(y, \partial \Omega)\},
$$
we have
\begin{equation}		\label{170702@eq1}
|G(x,y)|\le C|x-y|^{2-d},
\end{equation}
where $C=C(d,\lambda,K_0,A_0)$.
Furthermore,
the following estimates hold for all $y\in \Omega$ and $0<R<\min\{R_0,\operatorname{dist}(y,\partial \Omega)\}$:
\begin{enumerate}[$i)$]
\item
$\|G(\cdot,y)\|_{L_{2d/(d-2)}(\Omega\setminus B_R(y))}+\|DG(\cdot,y)\|_{L_2(\Omega\setminus B_R(y))}\le CR^{(2-d)/2}$.
\item
$\|\Pi(\cdot,y)\|_{L_2(\Omega\setminus B_R(y))}\le CR^{(2-d)/2}$.
\item
$\big|\{x\in \Omega:|G(x,y)|>t\}\big|\le C t^{-d/(d-2)}$ for all $t> \min\{R_0,\operatorname{dist}(y,\partial \Omega)\}^{2-d}$.
\item
$\big|\{x\in \Omega:|D_xG(x,y)|>t\}\big|\le C t^{-d/(d-1)}$ for all $t> \min\{R_0,\operatorname{dist}(y,\partial \Omega)\}^{1-d}$.
\item
$\big|\{x\in \Omega:|\Pi(x,y)|>t\}\big|\le C t^{-d/(d-1)}$ for all $t> \min\{R_0,\operatorname{dist}(y,\partial \Omega)\}^{1-d}$.
\item
$\|G(\cdot,y)\|_{L_q(B_R(y))}\le C_qR^{2-d+d/q}$, where $q\in [1,d/(d-2))$.
\item
$\|DG(\cdot,y)\|_{L_q(B_R(y))}\le C_q R^{1-d+d/q}$, where $q\in [1,d/(d-1))$.
\item
$\|\Pi(\cdot,y)\|_{L_q(B_R(y))}\le C_q R^{1-d+d/q}$, where $q\in [1,d/(d-1))$.
\end{enumerate}
In the above, the constant $C$ depends only on $d$, $\lambda$, $K_0$, and $A_0$, and $C_q$ depends also on $q$.
\end{theorem}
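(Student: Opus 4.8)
\emph{Proof proposal.} The plan is to construct $(G,\Pi)$ as a limit of \emph{averaged Green functions} and to extract estimates that are uniform as the mollification parameter tends to zero, following the scheme of Gr\"uter--Widman \cite{MR0657523} and Hofmann--Kim \cite{MR2341783}, with the bulk of the new work concentrated on the pressure. Fix $y\in\Omega$. For small $\varepsilon>0$ define $(G_\varepsilon(\cdot,y),\Pi_\varepsilon(\cdot,y))$ columnwise: let the $k$-th column solve, in the weak sense of Section~\ref{S2}, the conormal problem
\[
\operatorname{div}G_\varepsilon^{\cdot k}(\cdot,y)=0,\quad \cL G_\varepsilon^{\cdot k}(\cdot,y)+\nabla\Pi_\varepsilon^{k}(\cdot,y)=\Big(\tfrac1{|\Omega|}-\tfrac{\mathbf 1_{B_\varepsilon(y)}}{|B_\varepsilon(y)|}\Big)e_k,\quad \cB G_\varepsilon^{\cdot k}(\cdot,y)+\nu\Pi_\varepsilon^{k}(\cdot,y)=0,
\]
which is uniquely solvable in $\tilde W^1_2(\Omega)^d\times L_2(\Omega)$ by Lemma~\ref{170602@lem2}, since the right-hand side has zero mean over $\Omega$; in particular $(G_\varepsilon(\cdot,y))_\Omega=0$, which is the normalization \eqref{171212@eq1}. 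Running the same construction with $\cL^*$ gives $(G_\varepsilon^*,\Pi_\varepsilon^*)$. The a priori estimate of Lemma~\ref{170602@lem2}, combined with Assumption~\ref{A0}, yields the crude bound $\|DG_\varepsilon(\cdot,y)\|_{L_2(\Omega)}+\|\Pi_\varepsilon(\cdot,y)\|_{L_2(\Omega)}\le C\varepsilon^{(2-d)/2}$, which blows up as $\varepsilon\to0$ but will serve as a starting point.

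The heart of the argument is upgrading this to bounds uniform in $\varepsilon$ away from $y$ and to the pointwise bound \eqref{170702@eq1} near $y$. On any ball $B_s(x)\subset\Omega\setminus\overline{B_\varepsilon(y)}$, the pair $(G_\varepsilon(\cdot,y),\Pi_\varepsilon(\cdot,y))$ solves the homogeneous Stokes system up to the bounded source $\frac1{|\Omega|}I$, so Assumption~\ref{A1} controls $\|G_\varepsilon(\cdot,y)\|_{L_\infty(B_{s/2}(x))}$ by $C\,(s^{-d/2}\|G_\varepsilon(\cdot,y)\|_{L_2(B_s(x))}+s^2)$. Feeding this local bound, a Caccioppoli inequality on dyadic annuli $A_j=B_{2^{j+1}r}(y)\setminus B_{2^j r}(y)$, and the Sobolev inequality of Assumption~\ref{A0} into a hole-filling/iteration argument as in \cite{MR0657523,MR2341783} produces the uniform energy bound $\|DG_\varepsilon(\cdot,y)\|_{L_2(\Omega\setminus B_R(y))}\le CR^{(2-d)/2}$, and, running the iteration down to the scale $|x-y|$, the uniform pointwise bound $|G_\varepsilon(x,y)|\le C|x-y|^{2-d}$; one also needs here the representation identity obtained by using one averaged Green function as the test function in the weak formulation of the other. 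The weak-$L^{d/(d-2)}$ bound for $G_\varepsilon$ and the weak-$L^{d/(d-1)}$ bound for $DG_\varepsilon$ then follow by the usual layer-cake estimate, all uniformly in $\varepsilon$.

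The step I expect to be the main obstacle is the \emph{uniform control of $\Pi_\varepsilon$}: unlike in the elliptic setting, the pressure satisfies no equation of its own, so it must be recovered through the Stokes structure. On a subdomain $U$ (an annulus around $y$, or a region abutting $\partial\Omega$) I would use the inf-sup (Bogovskii) inequality to write $\|\Pi_\varepsilon(\cdot,y)-(\Pi_\varepsilon(\cdot,y))_U\|_{L_2(U)}$ as a supremum of $\int_U\Pi_\varepsilon(\cdot,y)\operatorname{div}\phi\,dx$ over test functions $\phi$ supported in $U$ with $\|D\phi\|_{L_2}\le1$, and then bound this quantity through the weak formulation by $C\|DG_\varepsilon(\cdot,y)\|_{L_2(\operatorname{supp}\phi)}$ plus a harmless local term. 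Two points require care: near $\partial\Omega$ the test functions cannot be taken compactly supported, so the conormal boundary condition has to be used in place of an interior cutoff; and the additive constants $(\Pi_\varepsilon(\cdot,y))_U$, together with the global $L_1$ bound on $\Pi_\varepsilon(\cdot,y)$ needed for Definition~\ref{D1}(a), must be pinned down, which I would do via a global Bogovskii argument on $\Omega$ (legitimate under Assumption~\ref{A0}) tied to the normalization $(G_\varepsilon(\cdot,y))_\Omega=0$. This yields $\|\Pi_\varepsilon(\cdot,y)\|_{L_2(\Omega\setminus B_R(y))}\le CR^{(2-d)/2}$ and the weak-$L^{d/(d-1)}$ bound, uniformly in $\varepsilon$.

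With all bounds uniform in $\varepsilon$, a diagonal argument gives a sequence $\varepsilon_j\to0$ along which $G_{\varepsilon_j}(\cdot,y)\rightharpoonup G(\cdot,y)$ in $\tilde W^1_1(\Omega)^{d\times d}$ and weakly in $W^1_2(\Omega\setminus B_R(y))^{d\times d}$ for every $R>0$, and $\Pi_{\varepsilon_j}(\cdot,y)\rightharpoonup\Pi(\cdot,y)$ likewise; by Assumption~\ref{A1} the functions $G_{\varepsilon_j}(\cdot,y)$ are equicontinuous on compact subsets of $\Omega\setminus\{y\}$, so the convergence there is locally uniform, whence $G(\cdot,y)\in C(\Omega\setminus\{y\})^{d\times d}$ and the pointwise bound \eqref{170702@eq1} passes to the limit. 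Passing to the limit in the weak formulations verifies Definition~\ref{D1}(a)--(b), and (c) follows by inserting a weak solution $(u,p)$ of \eqref{170129@eq3} as a test function in the averaged problem and letting $j\to\infty$, using that such $u$ is continuous and bounded by Assumption~\ref{A1}. Carrying out the same construction for $\cL^*$ produces $(G^*,\Pi^*)$, and the representation formula \eqref{180123@eq1} applied to the two families of averaged Green functions, in the limit as the two mollification parameters tend to $0$, yields the symmetry \eqref{170702@eq2} outside a null set $N_y$. Finally, estimates $i)$--$v)$ are the $\varepsilon\to0$ limits of the uniform bounds established above, and $vi)$--$viii)$ follow from the distribution-function bounds $iii)$--$v)$ by integrating over $B_R(y)$ in the indicated ranges of $q$.
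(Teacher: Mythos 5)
Your overall scheme matches the paper's: mollified delta sources, the crude Lemma~\ref{170602@lem2} bound, Assumption~\ref{A1} for interior local boundedness, a duality/representation identity against the adjoint approximated Green function, a Bogovskii-type argument for the pressure on annuli and on $\Omega\setminus B_R(y)$, weak compactness, and symmetry via the two families of approximated Green functions. So the route is the same; but there is one genuine gap in the limit-passage step.

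You claim that ``by Assumption~\ref{A1} the functions $G_{\varepsilon_j}(\cdot,y)$ are equicontinuous on compact subsets of $\Omega\setminus\{y\}$, so the convergence there is locally uniform.'' Assumption~\ref{A1} only asserts continuity of each individual solution together with a uniform $L_\infty$ bound; it provides no modulus of continuity, so equicontinuity of the family does \emph{not} follow and locally uniform convergence cannot be inferred this way. This is not a cosmetic point: if your equicontinuity argument were available, it would yield \eqref{170702@eq2} for \emph{every} $x\neq y$, whereas the theorem deliberately only claims it outside a null set $N_y$, and the paper's own remark explains that the stronger conclusion requires a modulus-of-continuity (e.g.\ H\"older) version of Assumption~\ref{A1}. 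The correct route is to pass to the weak limit first, verify that $(G(\cdot,y),\Pi(\cdot,y))$ satisfies property $(b)$ of Definition~\ref{D1}, and then apply Assumption~\ref{A1} directly to this limit pair on small balls away from $y$ to choose a continuous version of $G(\cdot,y)$; the a.e.\ identification with the subsequential pointwise limit of $G_{\varepsilon_j}(\cdot,y)$ is then obtained from Rellich--Kondrachov on a dense family of smooth subdomains (this is where the null set $N_y$ comes from), and the symmetry identity is completed by combining this a.e.\ pointwise convergence with $\lim_{\sigma\to 0}G^*_\sigma(y,x)=G(x,y)^\top$, which uses the already-established continuity of $G(\cdot,y)$.

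Two lesser points. First, you cannot extract a weakly convergent subsequence directly in $\tilde W^1_1(\Omega)$ (it is not reflexive); the paper instead uses weak convergence in $W^1_q(B_R(y))$ for $1<q<d/(d-1)$ (backed by the uniform $L_q$ estimates of Lemma~\ref{170710@lem2}) together with weak $W^1_2$-convergence off $B_R(y)$, and $\tilde W^1_1$ membership of the limit is then a consequence. Second, your description of the pointwise bound mixes a dyadic hole-filling iteration with the duality identity; in the paper these are cleanly separated as a two-step argument (duality against an adjoint solution with $L_\infty$ data gives the $L_1$ bound $\|v\|_{L_1(\Omega_R(x))}\le CR^2$; then Assumption~\ref{A1} applied to $v$ on a ball of radius $|x-y|/2$ upgrades it to the pointwise bound), and it is worth keeping them distinct since the duality step is what ties the constant to $A_0$ rather than to the mollification scale.
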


\begin{remark}
In Theorem \ref{T1}, one may prove the existence of the Green functions without the continuity condition in Assumption \ref{A1}.
In this case, the continuity of the Green functions and the identities in \eqref{170702@eq2} are not guaranteed, and the estimate \eqref{170702@eq1} is satisfied in the almost everywhere sense.
On the other hand, if one has a modulus of continuity estimate such as a H\"older estimate for the solution $u$ in Assumption \ref{A1}, then one can show that \eqref{170702@eq2} holds for all $x,y\in \Omega$ with $x\neq y$.
For further details, see the end of the proof of Theorem \ref{T1}.
\end{remark}

\begin{remark}
Let $(G,\Pi)$ and $(G^*,\Pi^*)$ be Green functions for $\cL$ and $\cL^*$, respectively, constructed in Theorem \ref{T1}.
Assume that  $(u,p)\in \tilde{W}^1_2(\Omega)^d\times L_2(\Omega)$ is a weak solution of
$$
\left\{
\begin{aligned}
\operatorname{div}u=g \quad &\text{in }\ \Omega,\\
\cL u+\nabla p= f+D_\alpha f_\alpha\quad &\text{in }\ \Omega,\\
\cB u+p\nu =f_\alpha \nu_\alpha \quad &\text{on }\ \partial \Omega,
\end{aligned}
\right.
$$
where $f\in \tilde{L}_\infty(\Omega)^d$, $f_\alpha\in L_\infty(\Omega)^d$, and $g\in L_\infty(\Omega)$.
Then for a.e. $y\in \Omega$, it holds that
$$
u(y)=-\int_\Omega G^*(x,y)^{\top}f(x)\,dx+\int_\Omega D_\alpha G^*(x,y)^{\top}f_\alpha(x)\,dx+\int_\Omega \Pi^*(x,y)^{\top}g(x)\,dx.
$$
Using this together with \eqref{170702@eq2}, we have
$$
u(y)=-\int_\Omega G(y,x)f(x)\,dx+\int_\Omega D_\alpha G(y,x)f_\alpha(x)\,dx+\int_\Omega \Pi^*(x,y)^{\top}g(x)\,dx.
$$
\end{remark}

To obtain the global pointwise bound for $G(x,y)$, we impose the following assumption.
We note that the assumption holds, for instance, when the coefficients $A^{\alpha\beta}$ of $\cL$ are variably partially BMO and $\Omega$ is a Reifenberg flat domain; see Appendix.

\begin{assumption}		\label{A2}
There exist constants $R_0\in (0,1]$ and $A_1>0$ such that the following holds:
Let $x_0\in \partial \Omega$ and $0<R<R_0$.
If $(u,p)\in {W}^1_2(\Omega_R(x_0))^d\times L_2(\Omega_R(x_0))$ satisfies
\begin{equation}		\label{171129@eq1a}
\left\{
\begin{aligned}
\operatorname{div}u=0 \quad &\text{in }\ \Omega_R(x_0),\\
\cL u+\nabla p= f\quad &\text{in }\ \Omega_R(x_0),\\
\cB u+p\nu =0 \quad &\text{on }\ \partial \Omega\cap B_R(x_0),
\end{aligned}
\right.
\end{equation}
where $f\in {L}_\infty(\Omega_R(x_0))^d$, then we have
$$
\|u\|_{L_\infty(\Omega_{R/2}(x_0))}\le A_1\big(R^{-d/2}\|u\|_{L_2(\Omega_R(x_0))}+R^2\|f\|_{L_\infty(\Omega_R(x_0))}\big).
$$
The same statement holds true when $\cL$ is replaced by $\cL^*$.
\end{assumption}

\begin{remark}
In the above assumption and throughout the paper, $(u,p)$ is said to satisfy \eqref{171129@eq1a} if $\operatorname{div}u=0$ in $\Omega_R(x_0)$ and
$$
\int_{\Omega_R(x_0)}A^{\alpha\beta}D_\beta u\cdot D_\alpha \phi\,dx+\int_{\Omega_R(x_0)}p \operatorname{div} \phi\,dx=-\int_{\Omega_R(x_0)}f\cdot \phi\,dx
$$
for any $\phi\in W^1_2(\Omega_R(x_0))^d$ such that $\phi=0$ on $\partial B_R(x_0)\cap \Omega$.
\end{remark}

\begin{theorem}		\label{T2}
Let $d\ge 3$ and  $\Omega$ be a bounded domain in $\bR^d$ with
$|\Omega|\ge m_0>0$.
Let $(G, \Pi)$ be the Green function constructed in Theorem \ref{T1} under Assumptions \ref{A0} and \ref{A1}.
If we assume Assumption \ref{A2} (in addition to Assumptions \ref{A0} and \ref{A1}), then we have
\begin{equation}		\label{180115@A1}
|G(x,y)| \le C|x-y|^{2-d}\quad  \text{for any $x,y\in \Omega$ with $0<|x-y|<R_0$},
\end{equation}
where $C=C(d,\lambda,m_0,K_0,A_0,A_1)$.
If we further assume that there exists a constant $\theta>0$ satisfying
\begin{equation}		\label{170709@eq1}
|B_R(x_0)\setminus \Omega|\ge \theta R^d, \quad \forall x_0\in \partial \Omega, \quad \forall R\in (0, R_0),
\end{equation}
then for any $y\in \Omega$ and $0<R< R_0$, the following estimates hold:
\begin{enumerate}[$i)$]
\item
$\|G(\cdot,y)\|_{L_{2d/(d-2)}(\Omega\setminus B_R(y))}+\|DG(\cdot,y)\|_{L_2(\Omega\setminus B_R(y))}\le CR^{(2-d)/2}$.
\item
$\|\Pi(\cdot,y)\|_{L_2(\Omega\setminus B_R(y))}\le CR^{(2-d)/2}$.
\item
$\big|\{x\in \Omega:|G(x,y)|>t\}\big|\le C t^{-d/(d-2)}$ for all $t>  R_0^{2-d}$.
\item
$\big|\{x\in \Omega:|D_xG(x,y)|>t\}\big|\le C t^{-d/(d-1)}$ for all $t> R_0^{1-d}$.
\item
$\big|\{x\in \Omega:|\Pi(x,y)|>t\}\big|\le C t^{-d/(d-1)}$ for all $t > R_0^{1-d}$.
\item
$\|G(\cdot,y)\|_{L_q(\Omega_R(y))}\le C_qR^{2-d+d/q}$, where $q\in [1,d/(d-2))$.
\item
$\|DG(\cdot,y)\|_{L_q(\Omega_R(y))}\le C_q R^{1-d+d/q}$, where $q\in [1,d/(d-1))$.
\item
$\|\Pi(\cdot,y)\|_{L_q(\Omega_R(y))}\le C_q R^{1-d+d/q}$, where $q\in [1,d/(d-1))$.
\end{enumerate}
In the above, the constant $C$ depends only on $d$, $\lambda$, $m_0$, $K_0$, $A_0$, $A_1$, and $\theta$, and $C_q$ depends also on $q$.
\end{theorem}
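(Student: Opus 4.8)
The plan is to follow the scheme of Section~\ref{S4} together with the argument of Kang--Kim~\cite{MR2718661}, upgrading each interior estimate used in the proof of Theorem~\ref{T1} to a global one: wherever that proof invokes the interior bound of Assumption~\ref{A1} on a ball $B_R(x_0)\subset\Omega$, one instead invokes the boundary bound of Assumption~\ref{A2} on a half-ball $\Omega_R(x_0)$ with $x_0\in\partial\Omega$, and wherever it uses an interior Caccioppoli/Sobolev argument one uses its counterpart on $\Omega_R(x_0)$. Because the conormal condition $\cB u+p\nu=0$ is the natural boundary condition, testing the equation against a cut-off multiple of the solution produces no boundary integral, so a Caccioppoli inequality (with a constant subtracted, constants being solutions of the homogeneous conormal problem) persists up to $\partial\Omega$; the pressure $\Pi(\cdot,y)$ is absorbed by correcting test functions with a Bogovski\u{\i}-type right inverse of the divergence, available on $\Omega$ and on $\Omega_R(x_0)$ since $\Omega$ is an extension domain (see the remark after Assumption~\ref{A0}). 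The condition \eqref{170709@eq1} will be used to supply a scale-invariant Poincar\'e--Sobolev inequality on $\Omega_R(x_0)$, $x_0\in\partial\Omega$, which is what lets the boundary Caccioppoli iteration close with constants depending only on the stated parameters.

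First I would establish the global energy estimates, that is items~$i)$ and~$ii)$: for $y\in\Omega$ and $0<R<R_0$,
\[
\|G(\cdot,y)\|_{L_{2d/(d-2)}(\Omega\setminus B_R(y))}+\|DG(\cdot,y)\|_{L_2(\Omega\setminus B_R(y))}+\|\Pi(\cdot,y)\|_{L_2(\Omega\setminus B_R(y))}\le CR^{(2-d)/2},
\]
and the same for $(G^*,\Pi^*)$. Starting from the corresponding estimate of Theorem~\ref{T1}, which is restricted to $R<\operatorname{dist}(y,\partial\Omega)$, one removes this restriction by a dyadic decomposition of $\Omega\setminus B_R(y)$: on each dyadic annulus, or its intersection with a half-ball near $\partial\Omega$, one runs the Caccioppoli estimate above, near $\partial\Omega$ in combination with Assumption~\ref{A2} and the Poincar\'e--Sobolev inequality from \eqref{170709@eq1}, and one sums the resulting series using the $\tilde{W}^1_1(\Omega)$ bound for $G(\cdot,y)$ from Theorem~\ref{T1} and the hypothesis $|\Omega|\ge m_0$ to handle the averages $\tfrac1{|\Omega|}\int_\Omega G(\cdot,y)$ that enter from the right-hand side term $\tfrac1{|\Omega|}I$. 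I expect this step, and in particular the bookkeeping of the pressure and of the zeroth-order/average terms through the iteration near $\partial\Omega$, to be the principal obstacle.

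Granting the global energy estimate, the pointwise bound \eqref{180115@A1} follows quickly. Fix $x\ne y$ with $R:=|x-y|<R_0$; if $R<\tfrac12\min\{R_0,\operatorname{dist}(y,\partial\Omega)\}$ this is \eqref{170702@eq1}. Otherwise, using $G(x,y)=G^*(y,x)^\top$ from Theorem~\ref{T1} it suffices to bound $|G(x,y)|$ under the extra assumption $\operatorname{dist}(x,\partial\Omega)\ge\operatorname{dist}(y,\partial\Omega)$ (the reverse case being the same bound for $G^*$, which satisfies Assumptions~\ref{A0}--\ref{A2} as well), and one splits according to the size of $\operatorname{dist}(y,\partial\Omega)$ relative to $R$. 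If $\operatorname{dist}(y,\partial\Omega)\ge cR$ for a fixed small $c$, then also $\operatorname{dist}(x,\partial\Omega)\ge cR$, so one applies Assumption~\ref{A1} to $G(\cdot,y)$ on $B_{cR}(x)\subset\Omega$ (on which $\cL G(\cdot,y)+\nabla\Pi(\cdot,y)=\tfrac1{|\Omega|}I$, a datum bounded by $C/m_0$, since $y\notin B_{cR}(x)$) and bounds $\|G(\cdot,y)\|_{L_2(B_{cR}(x))}$ by H\"older, $|B_{cR}(x)|\lesssim R^d$, and the global $L_{2d/(d-2)}$ estimate on $\Omega\setminus B_{R/2}(y)\supset B_{cR}(x)$; this yields $|G(x,y)|\lesssim R^{-d/2}\cdot R^{\,2-d/2}+R^2\lesssim R^{2-d}$. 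If instead $\operatorname{dist}(y,\partial\Omega)<cR$, choose $y_0\in\partial\Omega$ with $|y-y_0|=\operatorname{dist}(y,\partial\Omega)$ and $\rho\sim R$ so that $y\in\Omega_{\rho/2}(y_0)$ and $x\notin B_\rho(y_0)$; then $(G^*(\cdot,x),\Pi^*(\cdot,x))$ solves the homogeneous conormal problem on $\Omega_\rho(y_0)$ up to the datum $\tfrac1{|\Omega|}I$, so Assumption~\ref{A2} gives $|G(x,y)|=|G^*(y,x)|\le A_1\bigl(\rho^{-d/2}\|G^*(\cdot,x)\|_{L_2(\Omega_\rho(y_0))}+C\rho^2/m_0\bigr)$, and since $\Omega_\rho(y_0)\subset\Omega\setminus B_{R/2}(x)$ the same H\"older-plus-energy estimate bounds $\|G^*(\cdot,x)\|_{L_2(\Omega_\rho(y_0))}\lesssim\rho^{\,2-d/2}$, whence again $|G(x,y)|\lesssim\rho^{2-d}\sim R^{2-d}$.

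Finally, items~$iii)$--$viii)$ follow from \eqref{180115@A1} and items~$i)$--$ii)$ by the same elementary arguments as in Section~\ref{S4}. For $iii)$--$v)$ one applies Chebyshev's inequality, covering the superlevel set $\{|G(\cdot,y)|>t\}$ (and, respectively, those of $D_xG$ and $\Pi$) by its part inside a ball $B_\tau(y)$, estimated by the pointwise bound (respectively, trivially by $|B_\tau(y)|$), together with its part outside, estimated by the $L_{2d/(d-2)}$ (respectively, $L_2$) bound of~$i)$--$ii)$, and one optimizes in $\tau$; the lower threshold on $t$ reflects the constraint $\tau<R_0$. For $vi)$--$viii)$ one integrates the pointwise bound for $G(\cdot,y)$ directly over $\Omega_R(y)$ and obtains the bounds for $D_xG$ and $\Pi$ by interpolating the corresponding scaled weak-type inequalities; these computations are identical to those in the proof of Theorem~\ref{T1}, now valid on all of $\Omega_R(y)$ with $R<R_0$ precisely because the energy estimates are global.
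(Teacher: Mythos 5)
There is a genuine ordering problem in your plan. You propose to establish the global energy estimates (items \emph{i)}, \emph{ii)}) first, by a dyadic decomposition plus boundary Caccioppoli, and \emph{then} deduce the pointwise bound \eqref{180115@A1} from them. But your proof of the pointwise bound feeds $\|G(\cdot,y)\|_{L_{2d/(d-2)}(\Omega\setminus B_{R/2}(y))}$ into Assumption~\ref{A1}/\ref{A2}; meanwhile, any boundary Caccioppoli iteration for \emph{i)} needs to control $\frac{1}{R}\|G(\cdot,y)\|_{L_2(\text{annulus})}$ near $\partial\Omega$, and you propose to do this via the $\tilde W^1_1(\Omega)$ bound from Theorem~\ref{T1}. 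That bound, however, only holds on balls of radius $<\min\{R_0,\operatorname{dist}(y,\partial\Omega)\}$, so it degenerates as $y\to\partial\Omega$; it does not yield constants depending only on the stated parameters. So the energy estimate cannot be closed this way, and the pointwise bound then has no independent input: the argument is circular.

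The paper breaks this circle with a step you do not have: the global $L_1$ estimate \eqref{170705@eq4}, $\|G_\varepsilon(\cdot,y)\|_{L_1(\Omega_R(x))}\le CR^2$ for all $x,y\in\Omega$ and $0<2\varepsilon<R<R_0$, which is proved by \emph{duality} — testing against the adjoint solution with data $f=I_{\Omega_R(x)}\operatorname{sgn}(v)$ and using Assumptions~\ref{A0},~\ref{A1},~\ref{A2} to bound $\|u\|_{L_\infty}$ — with no reference to energy estimates for $G$. From this $L_1$ bound and the local boundedness of Assumptions~\ref{A1}/\ref{A2}, a Moser-type iteration gives the $L_\infty$ bound \eqref{180202@eq1a} for the approximated Green function of the adjoint, and hence the pointwise bound \eqref{180115@A1}. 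Only afterwards does the paper prove \emph{i)}--\emph{ii)}, combining the boundary pressure estimate (Lemma~\ref{170708@lem1}, which is where \eqref{170709@eq1} enters) with the already-available $L_1$ and $L_\infty$ bounds. Note also a secondary consequence of your ordering: the theorem asserts \eqref{180115@A1} under Assumptions~\ref{A0}--\ref{A2} alone, with $C$ independent of $\theta$, whereas your route would make \eqref{180115@A1} depend on \eqref{170709@eq1} through \emph{i)}. So your high-level plan of ``promote each interior lemma to its boundary analog'' is the right instinct, and your treatment of \emph{iii)}--\emph{viii)} from \eqref{180115@A1} and \emph{i)}--\emph{ii)} is fine, but the sequence must be: duality $L_1$ bound $\Rightarrow$ pointwise bound $\Rightarrow$ energy estimates, not the reverse.
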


\begin{remark}
If $|x-y|\ge R_0$, then $G(x,y)$ is bounded    by a constant depending only on $R_0$ and the parameters for the constant $C$ in \eqref{180115@A1}.
See \eqref{180202@eq2} for more details.
Thus in \eqref{180115@A1}, one can remove the condition $|x-y|<R_0$ if we allow the constant $C$ to depend also on $R_0$ and $\operatorname{diam}\Omega$.
\end{remark}

\section{Auxiliary results}		\label{S3}

In this section, we derive some auxiliary results which will be used in the proofs of the main theorems.
We do not impose any regularity assumption on the coefficients $A^{\alpha\beta}$ of the operator $\cL$.

\begin{lemma}		\label{170602@lem1}
Let $\Omega$ be a bounded domain in $\bR^d$ and $g\in L_{2}(\Omega)$.
Then there exists a function $u\in \tilde{W}^1_{2}(\Omega)^d$ such that
$$
\operatorname{div}u=g \quad \text{in }\, \Omega, \quad \|Du\|_{L_{2}(\Omega)}\le C\|g\|_{L_{2}(\Omega)},
$$
where the constant $C$ depends only on $d$.
\end{lemma}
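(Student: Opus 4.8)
The plan is to reduce the assertion to the classical solvability of the divergence equation on a ball (the Bogovski\u \i\ operator), whose constant depends only on the dimension, and then transplant the solution back to $\Omega$.

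First, since $\Omega$ is bounded, I would fix $R>0$ with $\overline\Omega\subset B_R$ and put $B=B_{2R}$, so that $|B\setminus\Omega|\ge|B_{2R}|-|B_R|\ge|B_R|\ge|\Omega|$. I extend $g$ to $B$ by setting $\tilde g=g$ on $\Omega$ and $\tilde g=-|B\setminus\Omega|^{-1}\int_\Omega g\,dx$ on $B\setminus\Omega$. Then $\int_B\tilde g\,dx=0$, and since the two pieces of $\tilde g$ have disjoint supports, the Cauchy--Schwarz inequality gives $\|\tilde g\|_{L_2(B)}^2=\|g\|_{L_2(\Omega)}^2+|B\setminus\Omega|^{-1}\big(\int_\Omega g\,dx\big)^2\le 2\|g\|_{L_2(\Omega)}^2$.

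Next I would invoke the classical fact that for any ball $B$ and any $h\in L_2(B)$ with $\int_B h\,dx=0$ there exists $v\in W^1_2(B)^d$ with zero trace on $\partial B$ such that $\operatorname{div}v=h$ in $B$ and $\|Dv\|_{L_2(B)}\le C\|h\|_{L_2(B)}$ with $C=C(d)$; the dimension-only dependence of $C$ follows because the Bogovski\u \i\ estimate for a domain star-shaped with respect to a ball depends only on $d$ and the ratio of the diameter to the inradius (equal to $2$ for a ball), and this estimate is dilation invariant. Applying it with $h=\tilde g$ yields $v$ with $\operatorname{div}v=\tilde g$ in $B$ and $\|Dv\|_{L_2(B)}\le C(d)\|g\|_{L_2(\Omega)}$. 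Finally, I would set $u=v|_\Omega-(v)_\Omega$. Subtracting a constant vector does not change the divergence and $\tilde g=g$ on $\Omega$, so $\operatorname{div}u=g$ a.e.\ in $\Omega$; moreover $u\in\tilde W^1_2(\Omega)^d$ and $\|Du\|_{L_2(\Omega)}=\|Dv\|_{L_2(\Omega)}\le\|Dv\|_{L_2(B)}\le C(d)\|g\|_{L_2(\Omega)}$, which is the claim.

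There is no genuine obstacle here: this is a standard preliminary. The only point that needs care is that the constant must be universal in $d$, which is why the argument is routed through a ball, where the Bogovski\u \i\ estimate has a dimension-only constant, rather than through $\Omega$ itself, on which no regularity or Poincar\'e-type inequality is assumed at this stage. An alternative route, avoiding the enlargement and the mean correction, is to take $u_0=\nabla w$ with $w$ the Newtonian potential of the zero extension of $g$ to $\bR^d$: then $\operatorname{div}u_0=g$ in $\bR^d$, $\|Du_0\|_{L_2(\bR^d)}=\|D^2 w\|_{L_2(\bR^d)}\le C(d)\|g\|_{L_2(\Omega)}$ by the Calder\'on--Zygmund inequality, $u_0\in L_2(\Omega)$ on the bounded set $\Omega$ by the Hardy--Littlewood--Sobolev inequality, and $u=u_0-(u_0)_\Omega$ completes the proof.
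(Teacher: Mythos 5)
The paper does not give its own proof of this lemma; it simply cites \cite[Lemma 3.1]{MR3809039}, so there is nothing in the present text against which your argument can be matched line by line. That said, your proof is correct and self-contained, and both routes you sketch are standard. The main route (extend $g$ to a mean-zero $\tilde g$ on a ball $B \supset \overline\Omega$, apply the Bogovski\u\i/divergence lemma on $B$ with its dimension-only constant, restrict to $\Omega$, and subtract the mean) is carefully done: the choice $B=B_{2R}$ guarantees $|B\setminus\Omega|\ge|\Omega|$, which is exactly what is needed so that the mean correction does not inflate the $L_2$-norm of $\tilde g$ by more than a factor of $\sqrt 2$, and the constant in the Bogovski\u\i\ estimate on a ball is dilation invariant, hence depends only on $d$. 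The alternative via the Newtonian potential is also valid, though it implicitly uses $d\ge 3$ for the Hardy--Littlewood--Sobolev step (which is harmless since the paper assumes $d\ge 3$ throughout); the Bogovski\u\i\ route is cleaner in that it works for all $d$ and requires no constraint on the dimension. One minor point worth noting explicitly, which you handle correctly: no Poincar\'e inequality or regularity is assumed on $\Omega$ at this stage, so the detour through an auxiliary ball (or through all of $\bR^d$) is essential; the divergence lemma cannot be applied directly on $\Omega$.
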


\begin{proof}
See \cite[Lemma 3.1]{MR3809039}.
\end{proof}

The following lemma is regarding the $W^{1,2}$-estimate and solvability for the Stokes system.

\begin{lemma}		\label{170602@lem2}
Let $d\ge 3$ and  $\Omega$ be a bounded domain in $\bR^d$.
Then under Assumption \ref{A0},
for any $f\in \tilde{L}_{2d/(d+2)}(\Omega)^d$, $ f_\alpha\in L_2(\Omega)^d$, and $g\in L_2(\Omega)$,  there exists a unique $(u,p)\in \tilde{W}^1_2(\Omega)^d\times {L}_2(\Omega)$ satisfying
$$
\left\{
\begin{aligned}
\operatorname{div} u=g \quad &\text{in }\ \Omega,\\
\cL u+\nabla p=f+D_\alpha  f_\alpha \quad &\text{in }\ \Omega,\\
\cB u+p\nu = f_\alpha \nu_\alpha \quad &\text{on }\ \partial \Omega.
\end{aligned}
\right.
$$
Moreover, we have
$$
\|Du\|_{L_2(\Omega)}+\|p\|_{L_2(\Omega)}\le C\|f\|_{L_{2d/(d+2)}(\Omega)}+\tilde{C}\left(\|f_\alpha\|_{L_2(\Omega)}+\|g\|_{L_2(\Omega)}\right),
$$
where $C=C(d,\lambda,K_0)$ and $\tilde{C}=\tilde{C}(d,\lambda)$.
\end{lemma}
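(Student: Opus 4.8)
The plan is to reduce to the case $g=0$, solve the resulting divergence-free problem by the Lax--Milgram lemma on the space of divergence-free $\tilde W^1_2$ fields, and then recover the pressure from the surjectivity of the divergence operator furnished by Lemma \ref{170602@lem1}. First, by Lemma \ref{170602@lem1} choose $w\in\tilde W^1_2(\Omega)^d$ with $\operatorname{div}w=g$ and $\|Dw\|_{L_2(\Omega)}\le C(d)\|g\|_{L_2(\Omega)}$; writing $u=v+w$, the problem becomes that of finding $(v,p)$ with $\operatorname{div}v=0$ solving the same system with $f_\alpha$ replaced by $\tilde f_\alpha:=f_\alpha-A^{\alpha\beta}D_\beta w$, where $\|\tilde f_\alpha\|_{L_2(\Omega)}\le\|f_\alpha\|_{L_2(\Omega)}+\lambda^{-1}C(d)\|g\|_{L_2(\Omega)}$. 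So it suffices to treat $g=0$, keeping track that $K_0$ should not enter the coefficients of $\|f_\alpha\|_{L_2}$ and $\|g\|_{L_2}$ in the final estimate.

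For existence, set $\mathcal H=\{\phi\in\tilde W^1_2(\Omega)^d:\operatorname{div}\phi=0\}$. By Assumption \ref{A0} and boundedness of $\Omega$, $\phi\mapsto\|D\phi\|_{L_2(\Omega)}$ is a norm on $\tilde W^1_2(\Omega)^d$ equivalent to the $W^1_2$ norm, so $\mathcal H$ is a Hilbert space with inner product $\int_\Omega D_\alpha\phi\cdot D_\alpha\psi\,dx$. The bilinear form $a(\phi,\psi)=\int_\Omega A^{\alpha\beta}D_\beta\phi\cdot D_\alpha\psi\,dx$ is bounded, $|a(\phi,\psi)|\le\lambda^{-1}\|D\phi\|_{L_2}\|D\psi\|_{L_2}$, and coercive directly from \eqref{171209@eq1}, $a(\phi,\phi)\ge\lambda\|D\phi\|_{L_2}^2$; and $F(\phi)=-\int_\Omega f\cdot\phi\,dx+\int_\Omega f_\alpha\cdot D_\alpha\phi\,dx$ is bounded on $\mathcal H$ by $(K_0\|f\|_{L_{2d/(d+2)}}+\|f_\alpha\|_{L_2})\|D\phi\|_{L_2}$, using $\|\phi\|_{L_{2d/(d-2)}}\le K_0\|D\phi\|_{L_2}$. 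Lax--Milgram produces a unique $v\in\mathcal H$ with $a(v,\phi)=F(\phi)$ for all $\phi\in\mathcal H$ and $\|Dv\|_{L_2}\le\lambda^{-1}(K_0\|f\|_{L_{2d/(d+2)}}+\|f_\alpha\|_{L_2})$. Since for a constant vector $c$ one has $\operatorname{div}c=0$, $a(v,c)=0$ (because $D_\alpha c=0$), and $\int_\Omega f\cdot c\,dx=0$ (because $(f)_\Omega=0$), applying the identity to $\phi-(\phi)_\Omega\in\mathcal H$ shows $a(v,\phi)=F(\phi)$ in fact for every $\phi\in W^1_2(\Omega)^d$ with $\operatorname{div}\phi=0$.

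To recover the pressure, consider $\ell(\phi):=F(\phi)-a(v,\phi)$ on $W^1_2(\Omega)^d$; it is bounded (the term $\int_\Omega f\cdot\phi\,dx$ makes sense because $f$ has mean zero, so $\phi$ may be replaced by $\phi-(\phi)_\Omega\in\tilde W^1_2(\Omega)^d$) and vanishes on $\{\phi\in W^1_2(\Omega)^d:\operatorname{div}\phi=0\}$, hence on the kernel of $\operatorname{div}:\tilde W^1_2(\Omega)^d\to L_2(\Omega)$. Since that operator is surjective with a bounded right inverse by Lemma \ref{170602@lem1}, the closed range theorem gives a unique $p\in L_2(\Omega)$ with $\ell(\phi)=\int_\Omega p\operatorname{div}\phi\,dx$ for all $\phi\in\tilde W^1_2(\Omega)^d$ and $\|p\|_{L_2}\le C(d)\|\ell\|_{(\tilde W^1_2(\Omega)^d)^*}$; decomposing a general $\phi\in W^1_2(\Omega)^d$ into $(\phi-(\phi)_\Omega)+(\phi)_\Omega$ and using that both $\ell$ and $\phi\mapsto\int_\Omega p\operatorname{div}\phi\,dx$ annihilate the constant part extends this to all $\phi\in W^1_2(\Omega)^d$, so $(v,p)$ is the sought weak solution. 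Estimating $\|\ell\|_{(\tilde W^1_2(\Omega)^d)^*}\le K_0\|f\|_{L_{2d/(d+2)}}+\|f_\alpha\|_{L_2}+\lambda^{-1}\|Dv\|_{L_2}$, substituting the bound on $\|Dv\|_{L_2}$, and undoing the reduction (adding back $w$, with $\|Dw\|_{L_2}\le C(d)\|g\|_{L_2}$) yields $\|Du\|_{L_2(\Omega)}+\|p\|_{L_2(\Omega)}\le C\|f\|_{L_{2d/(d+2)}(\Omega)}+\tilde C(\|f_\alpha\|_{L_2(\Omega)}+\|g\|_{L_2(\Omega)})$ with $C=C(d,\lambda,K_0)$ and $\tilde C=\tilde C(d,\lambda)$. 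For uniqueness, the difference $(u,p)$ of two solutions has $\operatorname{div}u=0$ and $a(u,\phi)+\int_\Omega p\operatorname{div}\phi\,dx=0$ for all $\phi\in W^1_2(\Omega)^d$; testing with $\phi=u\in\mathcal H$ gives $\lambda\|Du\|_{L_2}^2\le a(u,u)=0$, so $u\equiv 0$ (mean zero), whence $\int_\Omega p\operatorname{div}\phi\,dx=0$ for all $\phi$ and $p=0$ by surjectivity of $\operatorname{div}$.

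I expect the only genuinely delicate step to be the pressure recovery --- turning ``$\ell$ annihilates divergence-free fields'' into ``$\ell=\int_\Omega p\operatorname{div}(\cdot)\,dx$'', which is exactly where Lemma \ref{170602@lem1} is indispensable --- together with the bookkeeping ensuring that $K_0$ affects only the coefficient of $\|f\|_{L_{2d/(d+2)}}$. The remaining steps are a routine application of the Lax--Milgram lemma and the Sobolev-type inequality of Assumption \ref{A0}.
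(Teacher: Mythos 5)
Your proof is correct. The paper itself does not spell out the argument: it merely observes that Assumption \ref{A0} makes $\tilde{W}^1_2(\Omega)$ a Hilbert space under the $\|D\cdot\|_{L_2}$ inner product and then refers to the proof of the analogous Dirichlet-problem lemma in \cite[Lemma 3.2]{MR3693868}. What you have written is precisely the kind of argument that reference carries out, and it is the standard scheme for the $W^1_2$ solvability of Stokes-type systems: (i) subtract off a divergence lift $w$ from Lemma \ref{170602@lem1} to reduce to $g=0$, (ii) apply Lax--Milgram on the closed subspace of divergence-free mean-zero fields, and (iii) recover $p$ as a De Rham-type representative for the functional $\ell=F-a(v,\cdot)$, which vanishes on $\ker\operatorname{div}$, using the surjectivity of $\operatorname{div}:\tilde W^1_2(\Omega)^d\to L_2(\Omega)$ with the bound from Lemma \ref{170602@lem1}. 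Your bookkeeping correctly tracks the origin of each constant: $K_0$ enters only through the Sobolev bound $\|\phi\|_{L_{2d/(d-2)}}\le K_0\|D\phi\|_{L_2}$, hence multiplies only $\|f\|_{L_{2d/(d+2)}}$; the coefficients of $\|f_\alpha\|_{L_2}$ and $\|g\|_{L_2}$ pick up only $d$ and $\lambda$ (via the divergence lift, ellipticity, and the $C(d)$ constant of Lemma \ref{170602@lem1}), matching the stated $C(d,\lambda,K_0)$ versus $\tilde C(d,\lambda)$. One small stylistic point: you invoke the closed range theorem for step (iii), but since Lemma \ref{170602@lem1} already provides a quantitatively bounded solution operator for the divergence equation, the same conclusion follows directly by defining $\mu(h):=\ell(\phi_h)$ for any $\phi_h$ with $\operatorname{div}\phi_h=h$ and checking this is well-defined, linear, and bounded because $\ell$ annihilates $\ker\operatorname{div}$; this avoids citing a heavier functional-analytic result. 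Either route is valid.
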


\begin{proof}
Notice from \eqref{170602@eq2} that $\tilde{W}^1_2(\Omega)$ is a Hilbert space with the inner product
$$
\langle u,v\rangle =\int_\Omega D_\alpha u\cdot D_\alpha v\,dx.
$$
Then the proof of the lemma is almost the same as that of \cite[Lemma 3.2]{MR3693868}.
We omit the details.
\end{proof}

We have the following Caccioppoli-type inequalities for the Stokes system.

\begin{lemma}		\label{171205@lem5}
\begin{enumerate}[$(a)$]
\item
Let $(u,p)\in W^1_2(B_R(x_0))^d\times L_2(B_R(x_0))$ satisfy
$$
\left\{
\begin{aligned}
\operatorname{div} u=0 \quad &\text{in }\ B_R(x_0),\\
\cL u+\nabla p=f \quad &\text{in }\ B_R(x_0),
\end{aligned}
\right.
$$
where $x_0\in \bR^d$, $R>0$, and $f\in L_\infty(B_R(x_0))^d$.
Then we have
$$
\begin{aligned}
&\|Du\|_{L_2(B_{R/2}(x_0))}+\|p-(p)_{B_{R/2}(x_0)}\|_{L_2(B_{R/2}(x_0))}\\
&\le C\big(R^{-1}\|u\|_{L_2(B_R(x_0))}+R^{(d+2)/2}\|f\|_{L_\infty(B_R(x_0))}\big),
\end{aligned}
$$
where $C=C(d,\lambda)$.
\item
Let $\Omega$ be a bounded domain in $\bR^d$.
Assume that there exist constants $\theta>0$ and $0<R_0\le 1$ satisfying
$$
|B_r(y_0)\setminus \Omega|\ge \theta r^d, \quad \forall y_0\in \partial \Omega, \quad \forall r\in (0, R_0).
$$
Let $(u,p)\in W^1_2(\Omega_R(x_0))^d\times L_2(\Omega_R(x_0))$ satisfy
\begin{equation}		\label{171205@B1}	
\left\{
\begin{aligned}
\operatorname{div} u=0 \quad &\text{in }\ \Omega_R(x_0),\\
\cL u+\nabla p=f \quad &\text{in }\ \Omega_R(x_0),\\
\cB u+p \nu=0 \quad &\text{on }\, \partial \Omega\cap B_R(x_0),
\end{aligned}
\right.
\end{equation}
where $x_0\in \partial{\Omega}$, $0<R<R_0$, and $f\in L_\infty(\Omega_R(x_0))^d$.
Then
we have
\begin{equation}	\label{171205@B2}	
\begin{aligned}
&\|Du\|_{L_2(\Omega_{R/2}(x_0))}+\|{p}\|_{L_2(\Omega_{R/2}(x_0))}\\
&\le C\big(R^{-1}\|u\|_{L_2(\Omega_R(x_0))}+R^{(d+2)/2}\|f\|_{L_\infty(\Omega_R(x_0))}\big),
\end{aligned}
\end{equation}
where $C=C(d,\lambda,\theta)$.
\end{enumerate}
\end{lemma}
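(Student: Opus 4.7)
The proof follows the standard Caccioppoli strategy for the Stokes system, the key difficulty being that the pressure term does not vanish in a naive energy estimate. The plan, for both parts, is to use a cutoff together with a Bogovski\u{\i}-type correction so that the test function is divergence-free (and hence the pressure integrates away), and then to bound the pressure separately by duality. Throughout I center $u$ by $c:=(u)_{B_R(x_0)}$ in part (a) and $c:=(u)_{\Omega_R(x_0)}$ in part (b), so that Poincar\'e-type controls on $u-c$ are available.

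For part (a), choose a cutoff $\eta\in C^\infty_c(B_R(x_0))$ with $\eta\equiv 1$ on $B_{R/2}(x_0)$ and $|\nabla\eta|\le CR^{-1}$. Since $\operatorname{div} u=0$ and $\eta$ is compactly supported, the function $g:=2\eta\,\nabla\eta\cdot(u-c)$ has zero mean on $B_R(x_0)$, so Bogovski\u{\i}'s operator on the ball supplies $\psi\in W^1_2(B_R(x_0))^d$ with zero trace, $\operatorname{div}\psi=g$, and $\|\nabla\psi\|_{L_2(B_R)}\le CR^{-1}\|u-c\|_{L_2(B_R)}$. Testing the equation against the divergence-free admissible function $\phi:=\eta^2(u-c)-\psi$ eliminates the pressure, and after Cauchy-Schwarz with $\varepsilon$ and strong ellipticity one arrives at
\[
\|Du\|_{L_2(B_r)}^2\le\varepsilon\|Du\|_{L_2(B_s)}^2+C_\varepsilon(s-r)^{-2}\|u-c\|_{L_2(B_R)}^2+CR^{d+2}\|f\|_{L_\infty(B_R)}^2
\]
for $R/2\le r<s\le R$. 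Because $\psi$ is supported on all of $B_s$, the term $\varepsilon\|Du\|_{L_2(B_s)}^2$ cannot be absorbed directly; a Giaquinta-type iteration lemma removes it and, together with $\|u-c\|_{L_2(B_R)}\le C\|u\|_{L_2(B_R)}$, gives the gradient estimate. For the pressure I apply Bogovski\u{\i} on $B_{R/2}(x_0)$ to produce $\varphi\in W^1_2(B_{R/2})^d$ with zero trace and $\operatorname{div}\varphi=p-(p)_{B_{R/2}}$; testing the equation against $\varphi$ yields $\|p-(p)_{B_{R/2}}\|_{L_2(B_{R/2})}\le C\|Du\|_{L_2(B_{R/2})}+CR^{(d+2)/2}\|f\|_{L_\infty}$, which closes part (a).

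For part (b) the strategy is the same, but two points must be adjusted for the conormal problem. First, admissible test functions only vanish on $\partial B_R(x_0)\cap\Omega$, so the Bogovski\u{\i} corrector $\psi$ is allowed to be nonzero on $\partial\Omega\cap B_R$. To construct it, I extend $g=2\eta\,\nabla\eta\cdot(u-c)$ from $\Omega_R(x_0)$ to $B_R(x_0)$ by the constant $-|B_R\setminus\Omega|^{-1}\int_{\Omega_R}g\,dx$ on $B_R\setminus\Omega$, so that the extension $\tilde g$ has zero mean on $B_R$; Bogovski\u{\i} on $B_R$ then gives a solution with zero trace, whose restriction to $\Omega_R$ is the desired $\psi$. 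The hypothesis $|B_R\setminus\Omega|\ge\theta R^d$ controls the extension constant and yields $\|\nabla\psi\|_{L_2(\Omega_R)}\le C(d,\theta)R^{-1}\|u-c\|_{L_2(\Omega_R)}$, after which the energy estimate and iteration go through with $B_r,B_s$ replaced by $\Omega_r,\Omega_s$. Second, the pressure estimate aims at $\|p\|_{L_2(\Omega_{R/2})}$ itself rather than an oscillation. I extend $p$ by the analogous mean-cancelling constant on $B_{R/2}\setminus\Omega$ to obtain $\tilde p$ with $(\tilde p)_{B_{R/2}}=0$, solve $\operatorname{div}\varphi=\tilde p$ on $B_{R/2}$ with zero boundary trace by Bogovski\u{\i}, and test the equation; a short calculation based once more on the measure density condition shows $\int_{\Omega_{R/2}}p(p-\bar p)\,dx\ge c(d,\theta)\|p\|_{L_2(\Omega_{R/2})}^2$ for a suitable constant $\bar p$, converting the duality estimate into the bound \eqref{171205@B2}.

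The main obstacle is the pressure in the boundary case: the conormal condition forces us to work with test functions that may be nonzero on $\partial\Omega$, while the desired estimate concerns $\|p\|_{L_2}$ rather than $\|p-(p)_{\Omega_{R/2}}\|_{L_2}$. Both issues are resolved by the measure-density extension trick, which is exactly where the hypothesis $|B_r(y_0)\setminus\Omega|\ge\theta r^d$ enters quantitatively and forces the constant $C$ in \eqref{171205@B2} to depend on $\theta$; the rest of the argument is a direct adaptation of the interior case in part (a).
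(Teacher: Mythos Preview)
Your argument is correct, and the mechanism you use for the pressure in part (b)---extending to $B_r$ by a mean-cancelling constant and invoking Bogovski\u{\i} on the full ball---is exactly the one the paper uses as well (the exterior measure condition enters in precisely the same place). The route differs, however, in how the pressure interacts with the energy estimate. The paper does \emph{not} build a divergence-free corrector: it first proves the pressure bound $\|p\|_{L_2(\Omega_r)}\le C(\|Du\|_{L_2(\Omega_r)}+r^{(d+2)/2}\|f\|_{L_\infty})$ for every $r\le R$ by the extension/Bogovski\u{\i} duality, then tests the system with the \emph{non}-divergence-free function $\eta^2 u$ and absorbs the resulting term $\int p\,\nabla(\eta^2)\cdot u$ using that a priori bound; this produces the same inequality with an $\varepsilon\|Du\|_{L_2(\Omega_r)}$ on the right, which is then removed by the dyadic iteration $r_k=\tfrac{R}{2}(2-2^{-k})$. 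Your approach instead pays the Bogovski\u{\i} price up front to make $\phi=\eta^2(u-c)-\psi$ divergence-free, so the pressure never appears in the gradient step and is recovered only afterwards. Both arguments hinge on the same two ingredients (Bogovski\u{\i} with the measure-density extension, and an iteration to swallow the $\varepsilon$-term), so neither is materially shorter; your version cleanly decouples the gradient and pressure estimates, while the paper's avoids constructing the corrector $\psi$. A minor remark: in your final pressure step, since $\operatorname{div}\varphi=\tilde p=p$ on $\Omega_{R/2}$, the left side is already $\int_{\Omega_{R/2}}p^2$ rather than $\int p(p-\bar p)$, so the ``short calculation'' you allude to is in fact immediate.
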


\begin{proof}
The proof of the lemma proceeds in a standard manner.
See \cite{MR0641818} for Caccioppoli inequalities for both linear and nonlinear Stokes systems under some technical assumptions in a ball and a half ball.
For the reader's convenience, we present here the details of the proof of the assertion $(b)$, where we have a ball intersected with a domain satisfying an exterior measure condition.

Let $(u,p)\in W^1_2(\Omega_R(x_0))^d\times L_2(\Omega_R(x_0))$ satisfy \eqref{171205@B1}, where $x_0\in \partial \Omega$, $0<R<R_0$, and $f\in L_\infty(\Omega_R(x_0))^d$.
Let $r\in (0, R]$ be given.
We extend $p$ to $B_r(x_0)$ so that $(p)_{B_r(x_0)}=0$ and $\|p\|_{L_2(B_r(x_0))}$ is comparable to $\|p\|_{L_2(\Omega_r(x_0))}$.
By the existence of solutions to the divergence equation (with the homogeneous Dirichlet boundary condition and the right-hand side having zero mean) in a ball, there exists $\phi\in \mathring{W}^1_2(B_r(x_0))^d$ satisfying
$$
\operatorname{div}\phi=p \quad \text{in }\, B_r(x_0), \quad
\|D\phi\|_{L_2(B_r(x_0))}\le C\|p\|_{L_2(\Omega_r(x_0))},
$$
where $C=C(d, \theta)$.
We extend $\phi$ by zero on $\Omega\setminus B_r(x_0)$ and apply it as a test function to \eqref{171205@B1} to get
\begin{equation}		\label{171205@C1}
\|p\|_{L_2(\Omega_r(x_0))}\le C\big(\|Du\|_{L_2(\Omega_r(x_0))}+r^{(d+2)/2}\|f\|_{L_\infty(\Omega_r(x_0))}\big),
\end{equation}
where $C=C(d,\lambda, \theta)$.

Let $0<\rho<r\le R$ and $\eta$ be a smooth function on $\bR^d$ satisfying
$$
0\le \eta\le 1, \quad \eta\equiv 1 \, \text{ on }\, B_\rho(x_0), \quad \operatorname{supp}\eta \subset B_r(x_0), \quad |\nabla \eta|\le C(r-\rho)^{-1}.
$$
Applying $\eta^2 u$ as a test function to \eqref{171205@B1} and using \eqref{171205@C1}, we have
\begin{equation}		\label{171205@C2}
\begin{aligned}
\|Du\|_{L_2(\Omega_\rho(x_0))}&\le \varepsilon\|Du\|_{L_2(\Omega_r(x_0))}\\
&\quad+C\big((r-\rho)^{-1}\|u\|_{L_2(\Omega_r(x_0))}+ r^{(d+2)/2}\|f\|_{L_\infty(\Omega_r(x_0))}\big)
\end{aligned}
\end{equation}
for $\varepsilon\in (0,1)$, where $C=C(d,\lambda,\theta,\varepsilon)$.
For $k\in \{0,1,2,\ldots\}$, we set
$$
\varepsilon=\frac{1}{8}, \quad r_k=\frac{R}{2}\left(2-\frac{1}{2^k}\right)
$$
so that \eqref{171205@C2} becomes
\begin{align*}
\|Du\|_{L_2(\Omega_{r_k}(x_0))}&\le \varepsilon\|Du\|_{L_2(\Omega_{r_{k+1}}(x_0))}\\
&\quad  +C\big(2^k R^{-1}\|u\|_{L_2(\Omega_{R}(x_0))}+R^{(d+2)/2}\|f\|_{L_\infty(\Omega_{R}(x_0))}\big).
\end{align*}
Multiplying $\varepsilon^k$ and summing the estimates, we obtain that
$$
\|Du\|_{L_2(\Omega_{R/2}(x_0))}\le C\big(R^{-1} \|u\|_{L_2(\Omega_{R}(x_0))}+R^{(d+2)/2}\|f\|_{L_\infty(\Omega_{R}(x_0))}\big).
$$
Therefore, we get the desired estimate \eqref{171205@B2} from \eqref{171205@C1} and the above inequality.
The lemma is proved.
\end{proof}

\section{Approximated Green functions}		\label{S4}
Throughout this section, we assume that the hypotheses of Theorem \ref{T1} hold.
Under the hypotheses, we shall construct an approximated Green function and derive its various interior estimates.
We mainly follow the arguments in  Hofmann-Kim \cite{MR2341783}.

Let $y\in \Omega$, $\varepsilon\in (0,1]$, and $k\in \{1,\ldots,d\}$.
We denote by $(v,\pi)=(v_{\varepsilon,y,k}, \pi_{\varepsilon,y,k})$ the solution in $\tilde{W}^1_2(\Omega)^d\times L_2(\Omega)$ of the problem
\begin{equation}		\label{170609@eq2}
\left\{
\begin{aligned}
\operatorname{div} v=0 \quad &\text{in }\ \Omega,\\
\cL v+\nabla \pi=\Phi_{\varepsilon,y} e_k\quad &\text{in }\ \Omega,\\
\cB v+\pi\nu = 0 \quad &\text{on }\ \partial \Omega,
\end{aligned}
\right.
\end{equation}
where $e_k$ is the $k$-th unit vector in $\bR^d$ and
\begin{equation}		\label{170704@eq1}
\Phi_{\varepsilon,y}:=-\frac{1}{|\Omega_\varepsilon(y)|}I_{\Omega_\varepsilon(y)}+\frac{1}{|\Omega|}\in \tilde{L}_\infty(\Omega).
\end{equation}
Here, $I_{\Omega_{\varepsilon}(y)}$  is the characteristic function.
Note that, for each $\varepsilon \in (0,1]$ and $B_R(x) \subset \Omega$ with $R< R_0$,
$(v,\pi)$ satisfies
$$	
\left\{
\begin{aligned}
\operatorname{div} v=0 &\quad \text{in }\, B_R(x),\\
\cL v+\nabla \pi=\Phi_{\varepsilon, y}e_k &\quad \text{in }\,B_R(x),
\end{aligned}
\right.
$$
where $\Phi_{\varepsilon,y}e_k$ is a bounded function.
Thus, by Assumption \ref{A1} there exists a version of $v$ in $B_R(x)$ which is continuous on $\overline{B_{R/2}(x)}$.
Then there is a version $\tilde{v}$ of $v$ such that $\tilde{v} = v$ a.e. in $\Omega$ and $\tilde{v}$ is continuous in $\Omega$.
We define the approximated Green function $(G_\varepsilon(\cdot,y),\Pi_\varepsilon(\cdot,y))$ for $\cL$ by
$$
G^{jk}_{\varepsilon}(\cdot,y)= \tilde{v}^j=\tilde{v}^j_{\varepsilon,y,k}  \quad \text{and}\quad \Pi^k_\varepsilon(\cdot,y)=\pi=\pi_{\varepsilon,y,k}.
$$
Here, $G_{\varepsilon}(\cdot,y)$ is a $d\times d$ matrix-valued function and $\Pi_{\varepsilon}(\cdot,y)$ is a $1\times d$ vector-valued function.
By Lemma \ref{170602@lem2}, we have
\begin{align}		
\nonumber
\|DG_\varepsilon(\cdot,y)\|_{L_2(\Omega)}+\|\Pi_\varepsilon(\cdot,y)\|_{L_2(\Omega)} &\le C\big(|\Omega_\varepsilon(y)|^{(2-d)/(2d)}+|\Omega|^{(2-d)/(2d)}\big)\\
\label{170602@eq3}
& \le C |\Omega_\varepsilon(y)|^{(2-d)/(2d)},
\end{align}
where $C=C(d,\lambda,K_0)$.

In the lemma below, we obtain the pointwise bound for the approximated Green function.

\begin{lemma}		\label{170607@lem1}
For any $x,y\in \Omega$ satisfying
$$
0<2\varepsilon<\frac{|x-y|}{2}<\min\left\{R_0,\frac{1}{3}\operatorname{dist}(y,\partial\Omega)\right\},
$$
we have
$$
|G_\varepsilon(x,y)|\le C|x-y|^{2-d},
$$
where $C=C(d,\lambda,K_0,A_0)$.
\end{lemma}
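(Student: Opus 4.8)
The plan is to derive the pointwise bound for $G_\varepsilon(x,y)$ from the local boundedness estimate in Assumption \ref{A1}, applied at the point $x$ on a ball of radius $\sim |x-y|$, combined with an $L_2$-averaged estimate for $v=v_{\varepsilon,y,k}$ on an annular region away from $y$. Write $r=|x-y|$ and $R_x=r/3$ (say), so that $B_{R_x}(x)\subset \Omega$, $R_x<R_0$, and $B_{R_x}(x)$ stays at distance $\gtrsim r$ from $y$; in particular $\Omega_\varepsilon(y)$ does not meet $B_{R_x}(x)$ since $2\varepsilon < r/2$. On $B_{R_x}(x)$ the pair $(v,\pi)$ solves $\operatorname{div} v = 0$, $\cL v + \nabla\pi = \Phi_{\varepsilon,y}e_k$ with $\|\Phi_{\varepsilon,y}e_k\|_{L_\infty(B_{R_x}(x))} \le |\Omega|^{-1}$, so Assumption \ref{A1} gives
\[
\|v\|_{L_\infty(B_{R_x/2}(x))} \le A_0\big(R_x^{-d/2}\|v\|_{L_2(B_{R_x}(x))} + R_x^2 |\Omega|^{-1}\big).
\]
Since $\tilde v = v$ a.e.\ and $\tilde v$ is continuous, $|G_\varepsilon(x,y)| = |\tilde v(x)| \le \|v\|_{L_\infty(B_{R_x/2}(x))}$, and the second term is harmless because $R_x^2|\Omega|^{-1} \lesssim r^2 \cdot r^{-d} \cdot (r^d/|\Omega|) \lesssim r^{2-d}$ once one notes $r \le \operatorname{diam}\Omega$; more cleanly, one bounds $|\Omega|^{-1}$ crudely or absorbs it, so the real content is the first term.

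The crux is therefore the estimate $\|v\|_{L_2(B_{R_x}(x))} \le C r^{2-d/2}\cdot r^{?}$, i.e.\ $R_x^{-d/2}\|v\|_{L_2(B_{R_x}(x))} \lesssim r^{2-d}$, equivalently $\|v\|_{L_2(B_{R_x}(x))} \lesssim r^{2-d/2}$. The natural route is to test the weak formulation \eqref{170609@eq2} against a suitable test function and exploit that the source $\Phi_{\varepsilon,y}e_k$ is supported in $\Omega_\varepsilon(y) \cup$ (a constant). Following Hofmann--Kim, I would run a dyadic/annular energy argument: for the annulus $\Omega \setminus B_{r/4}(y)$ (which contains $B_{R_x}(x)$ when the geometric constants are chosen right), the source is just the constant $|\Omega|^{-1}I$, and one controls the $L_2$ norm of $v$ on $B_{R_x}(x)$ by the Dirichlet energy $\|Dv\|_{L_2(\Omega)}$ via the Sobolev/Poincaré inequality \eqref{170602@eq2}, using $v \in \tilde W^1_2(\Omega)^d$. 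Indeed from \eqref{170602@eq3} we already have $\|Dv\|_{L_2(\Omega)} \le C|\Omega_\varepsilon(y)|^{(2-d)/(2d)} \le C\varepsilon^{(2-d)/2}$, but this blows up as $\varepsilon\to 0$ and is NOT what we want; the point of the lemma is that locally, far from $y$, $v$ is much better behaved. So the argument must be localized: apply the interior Caccioppoli inequality of Lemma \ref{171205@lem5}(a) on balls inside the good annulus to trade $\|Dv\|_{L_2}$ on a smaller ball for $\|v\|_{L_2}$ on a larger one plus the $L_\infty$ norm of the (constant) source, then iterate this together with the Sobolev inequality applied on the good region to close a self-improving estimate — this is the standard reverse-iteration producing the decay $|x-y|^{2-d}$.

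Concretely, the key steps in order are: (1) fix $R = |x-y|$, choose radii $r_j = R/2 + R/2^{j+2}$ or a comparable dyadic scheme on the annulus $A = \Omega \setminus B_{R/4}(y)$, on which $\Phi_{\varepsilon,y}e_k$ equals the constant $|\Omega|^{-1}e_k$; (2) on each ball $B_{2\rho}(z) \subset A$ apply Lemma \ref{171205@lem5}(a) to get $\|Dv\|_{L_2(B_\rho(z))} \le C(\rho^{-1}\|v\|_{L_2(B_{2\rho}(z))} + \rho^{(d+2)/2}|\Omega|^{-1})$; (3) use the global Sobolev inequality \eqref{170602@eq2} for $v - (v)_\Omega = v$ to convert Dirichlet-energy control into $L_{2d/(d-2)}$ control, then Hölder on the annular shells to obtain a recursion between $L_2$-norms of $v$ on nested annuli of comparable size that improves the power of $R$ at each step; (4) iterate finitely/infinitely many times (à la Hofmann--Kim Lemma) to land at $R^{-d/2}\|v\|_{L_2(B_{R/3}(x))} \le C R^{2-d}$ with $C = C(d,\lambda,K_0)$; (5) plug into the Assumption \ref{A1} estimate above, absorb the lower-order term $R_x^2|\Omega|^{-1}$, and conclude $|G_\varepsilon(x,y)| \le C R^{2-d}$ with $C = C(d,\lambda,K_0,A_0)$. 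The main obstacle is step (3)--(4): setting up the iteration so that the pressure term $\pi$ is genuinely controlled (it only appears through Lemma \ref{171205@lem5}, where it comes with a mean-zero adjustment on each ball) and so that the constants do not accumulate — this is exactly where "the presence of the pressure term makes the argument more involved," as the introduction warns, and care is needed that the test functions used in the Caccioppoli step respect $\operatorname{div} v = 0$ and do not reintroduce a bad dependence on $\varepsilon$.
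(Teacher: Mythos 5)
Your outer scaffold is correct and matches the paper's in spirit: work on a ball centered at $x$ of radius comparable to $|x-y|$, use Assumption \ref{A1} there to convert an averaged bound into a pointwise bound, and note that the source in that ball is just the harmless constant $e_k/|\Omega|$. But your core claim --- that $\|v\|_{L_2(B_{R_x}(x))}\lesssim r^{2-d/2}$ can be obtained from the Caccioppoli inequality on balls inside the far annulus plus the global Sobolev inequality \eqref{170602@eq2}, iterated ``\`a la Hofmann--Kim'' --- does not close, and this is the entire content of the lemma. Every estimate in your step (2) trades $\|Dv\|_{L_2}$ on a ball for $\|v\|_{L_2}$ on a bigger ball; every use of \eqref{170602@eq2} trades $\|v\|_{L_{2d/(d-2)}(\Omega)}$ for $\|Dv\|_{L_2(\Omega)}$, which, as you yourself observe via \eqref{170602@eq3}, is only bounded by $C\varepsilon^{(2-d)/2}$. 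There is simply no $\varepsilon$-independent anchor anywhere in the chain, so iterating produces no decay; the ``recursion that improves the power of $R$ at each step'' is not something Caccioppoli $+$ Sobolev can produce from these data. (The Hofmann--Kim iteration you are thinking of is the $L_1\to L_\infty$ local boundedness step, which the paper \emph{does} use in the form $\|w\|_{L_\infty(B_{R/2})}\le CR^{-d}\|w\|_{L_1(B_R)}$ following Giaquinta, but that is an \emph{upgrade} of an already-obtained averaged bound, not a device for creating one.)

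The missing idea is duality. The paper obtains the $\varepsilon$-uniform starting estimate $\|v\|_{L_1(\Omega_R(x))}\le CR^2$ by solving the \emph{adjoint} conormal problem
\[
\operatorname{div}u=0,\qquad \cL^* u+\nabla p=-f+(f)_\Omega,\qquad \cB^* u+p\nu=0,
\]
with $f=I_{\Omega_R(x)}(\operatorname{sgn}v^1,\ldots,\operatorname{sgn}v^d)^\top$, using Assumption \ref{A1} (applied to $\cL^*$ near $y$) together with Lemma \ref{170602@lem2} to get $\|u\|_{L_\infty(B_{R/2}(y))}\le CR^2$, and then pairing the two weak formulations (with $\operatorname{div}u=\operatorname{div}v=0$ killing both pressure terms) to deduce
\[
\int_{\Omega_R(x)}|v|\,dz \;\sim\; \int_\Omega A^{\alpha\beta}D_\beta v\cdot D_\alpha u\,dz \;=\; \dashint_{\Omega_\varepsilon(y)}u^k\,dz \;\le\; CR^2.
\]
This $L_1$ bound on $\Omega_R(x)$, uniform in $\varepsilon$, is then fed into the local boundedness iteration on $B_R(x)$ to give $|v(x)|\le CR^{2-d}$. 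Without this duality step your argument has no $\varepsilon$-independent foothold and cannot be repaired by sharper bookkeeping on the annuli; replacing the missing $L_2$ estimate by the duality-produced $L_1$ estimate (and then upgrading $L_1\to L_\infty$) is the essential structure of the proof.
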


\begin{proof}
Let $x,y\in \Omega$ and $k\in \{1,\ldots,d\}$.
Denote
\begin{equation}		\label{170714_02}
(v,\pi)=(G_\varepsilon^{\cdot k}(\cdot,y), \Pi^k_\varepsilon(\cdot,y)),
\end{equation}
where $G_{\varepsilon}^{\cdot k}(\cdot,y)$ is the $k$-th column of $G_\varepsilon(\cdot,y)$.
We assume that $0<2\varepsilon<R<\min\{R_0,\operatorname{dist}(y,\partial \Omega)\}$.
Find $(u,p)\in \tilde{W}^1_2(\Omega)^d\times L_2(\Omega)$ satisfying
$$	
\left\{
\begin{aligned}
\operatorname{div} u=0 &\quad \text{in }\, \Omega,\\
\cL^* u+\nabla p=-f +(f)_\Omega&\quad \text{in }\,\Omega,\\
\cB^* u+p\nu=0 &\quad \text{on }\, \partial \Omega,
\end{aligned}
\right.
$$
where $f=I_{\Omega_R(x)}\big(\operatorname{sgn}v^1, \ldots,\operatorname{sgn}v^d\big)^{\top}$. Then by Assumptions \ref{A0} and \ref{A1}, we have
\begin{align}		
\nonumber
\|u\|_{L_\infty(B_{R/2}(y))}&\le C\big(R^{1-d/2}\|u\|_{L_{2d/(d-2)}(B_R(y))}+R^2\big)\\
\nonumber
&\le C\big(R^{1-d/2}\|Du\|_{L_2(\Omega)}+R^2\big)\\
\label{171129@eq2}
&\le  C R^2,
\end{align}
where the last inequality is due to Lemma \ref{170602@lem2} and $C=C(d,\lambda,K_0,A_0)$.
Since we have
$$
\int_{\Omega_R(x)}f\cdot v\,dz=\int_\Omega A^{\alpha\beta}D_\beta v\cdot D_\alpha u\,dz=\dashint_{B_{\varepsilon}(y)}u^k\,dz,
$$
by \eqref{171129@eq2} and the fact that $\varepsilon<R/2$, we get
\begin{equation}		\label{170607@eq8}
\|v\|_{L_1(\Omega_R(x))}\le CR^2
\end{equation}
for any $x\in \Omega$ and $0<2\varepsilon<R<\min\{R_0,\operatorname{dist}(y,\partial \Omega)\}$.

Assume
$$
0<2\varepsilon<R:=\frac{|x-y|}{2}<\min\left\{R_0,\frac{1}{3}\operatorname{dist}(y,\partial\Omega)\right\}.
$$
Since $B_R(x)\subset \Omega$ and $B_R(x)\cap B_\varepsilon(y)=\emptyset$, we obtain by \eqref{170609@eq2} that
\begin{equation}		\label{180608@eq3a}
\left\{
\begin{aligned}
\operatorname{div} v=0 &\quad \text{in }\, B_R(x),\\
\cL v+\nabla \pi=\frac{e_k}{|\Omega|} &\quad \text{in }\,B_R(x).
\end{aligned}
\right.
\end{equation}
For any $z\in B_R(x)$ and $r\in (0, R]$ with $B_r(z)\subset B_R(x)$, by Assumption \ref{A1} applied to \eqref{180608@eq3a} in $B_r(z)$, we have
\begin{align*}
\|v\|_{L_\infty(B_{r/2}(z))}
&\le A_0\bigg(r^{-d/2}\|v\|_{L_2(B_r(z))}+\frac{r^2}{|\Omega|}\bigg)\\
&\le C\big(r^{-d/2}\|v\|_{L_2(B_r(z))}+R^{2-d}\big),
\end{align*}
where we used the fact that $r\le R$ and $|\Omega|\ge CR^d$ in the second inequality.
Hence by setting $w=|v|+R^{2-d}$, we see that
$$
\|w\|_{L_\infty(B_{r/2}(z))}\le C r^{-d/2}\|w\|_{L_2(B_r(z))}.
$$
Because the above inequality holds for all $z\in B_R(x)$ and $r\in (0, R]$ with $B_r(z)\subset B_R(x)$,
by a well known argument (see \cite[pp. 80--82]{MR1239172})
we have
$$
\|w\|_{L_\infty(B_{R/2}(x))}\le C R^{-d}\|w\|_{L_1(B_R(x))},
$$
which implies
\begin{equation}		\label{180621@eq1}
\|v\|_{L_\infty (B_{R/2}(x))}\le C\big(R^{-d}\|v\|_{L_1(B_R(x))}+R^{2-d}\big).
\end{equation}
Thus,  using \eqref{170607@eq8} and the continuity of $v$, we get
$$
|v(x)|\le CR^{2-d},
$$
where $C=C(d,\lambda,K_0,A_0)$.
The lemma is proved.
\end{proof}

\begin{lemma}		\label{170625@lem1}
Let $y\in \Omega$, $0<R< \min\big\{R_0,\frac{4}{5}\operatorname{dist}(y,\partial \Omega)\big\}$, and $0<\varepsilon<R/4$.
For $k\in \{1,\ldots,d\}$, we set
$$
\tilde{\Pi}^k_\varepsilon(\cdot,y)=\Pi^k_\varepsilon(\cdot,y)-(\Pi^k_\varepsilon(\cdot,y))_{B_R(y)\setminus B_{R/2}(y)}.
$$
Then we have
$$
\|\tilde{\Pi}^k_\varepsilon(\cdot,y)\|_{L_2(B_R(y)\setminus B_{R/2}(y))}\le \frac{C}{R}\|G_\varepsilon^{\cdot k}(\cdot,y)\|_{L_2(B_{5R/4}(y)\setminus B_{R/4}(y))}+CR^{(2-d)/2},
$$
where $G_{\varepsilon}^{\cdot k}(\cdot,y)$ is the $k$-th column of $G_\varepsilon(\cdot,y)$ and $C=C(d,\lambda)$.
\end{lemma}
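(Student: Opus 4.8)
\emph{Proof proposal.} The plan is to estimate the renormalized pressure $\tilde\Pi^k_\varepsilon(\cdot,y)$ by a duality argument (testing against a solution of the divergence equation supported on the annulus) and then absorb the gradient of $G_\varepsilon^{\cdot k}(\cdot,y)$ that appears by an interior Caccioppoli estimate. Throughout write $(v,\pi)=(G_\varepsilon^{\cdot k}(\cdot,y),\Pi_\varepsilon^k(\cdot,y))$ and $\mathcal{D}=B_R(y)\setminus B_{R/2}(y)$. Since $R<\tfrac45\operatorname{dist}(y,\partial\Omega)$ we have $B_{5R/4}(y)\subset\Omega$, and since $\varepsilon<R/4$ we have $\Omega_\varepsilon(y)\subset B_{R/4}(y)$, so that $\Phi_{\varepsilon,y}$ of \eqref{170704@eq1} equals $1/|\Omega|$ on $B_{5R/4}(y)\setminus B_{R/4}(y)$; hence by \eqref{170609@eq2} the pair $(v,\pi)$ solves $\operatorname{div}v=0$ and $\cL v+\nabla\pi=e_k/|\Omega|$ in $B_{5R/4}(y)\setminus B_{R/4}(y)$.

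\emph{Step 1 (pressure estimate).} By construction $\tilde\Pi_\varepsilon^k(\cdot,y)$ has zero mean over $\mathcal{D}$. As in the proof of Lemma \ref{171205@lem5}, now on the Lipschitz domain $\mathcal{D}$ (rescaling to the fixed annulus $B_1\setminus B_{1/2}$ makes the constant depend only on $d$), there is $\phi\in\mathring{W}^1_2(\mathcal{D})^d$ with $\operatorname{div}\phi=\tilde\Pi_\varepsilon^k(\cdot,y)$ in $\mathcal{D}$ and $\|D\phi\|_{L_2(\mathcal{D})}\le C\|\tilde\Pi_\varepsilon^k(\cdot,y)\|_{L_2(\mathcal{D})}$ with $C=C(d)$. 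Extending $\phi$ by zero to $\Omega$ and using it as a test function in \eqref{170609@eq2}, and noting that $\operatorname{supp}\phi\subset\mathcal{D}$ is disjoint from $B_\varepsilon(y)$ so that $\Phi_{\varepsilon,y}\equiv1/|\Omega|$ there, together with the identity $\int_{\mathcal{D}}\pi\operatorname{div}\phi\,dx=\|\tilde\Pi_\varepsilon^k(\cdot,y)\|_{L_2(\mathcal{D})}^2$, we obtain
$$\|\tilde\Pi_\varepsilon^k(\cdot,y)\|_{L_2(\mathcal{D})}^2=-\int_{\mathcal{D}}A^{\alpha\beta}D_\beta v\cdot D_\alpha\phi\,dx-\frac1{|\Omega|}\int_{\mathcal{D}}\phi^k\,dx.$$
The first integral is at most $\lambda^{-1}\|Dv\|_{L_2(\mathcal{D})}\|D\phi\|_{L_2(\mathcal{D})}$; for the second, H\"older's inequality, the Poincar\'e inequality $\|\phi\|_{L_2(\mathcal{D})}\le CR\|D\phi\|_{L_2(\mathcal{D})}$ (valid since $\phi$ vanishes on $\partial\mathcal{D}$ and $\mathcal{D}\subset B_R(y)$), and $|\Omega|\ge|B_R(y)|=c_dR^d$ give the bound $CR^{(2-d)/2}\|\tilde\Pi_\varepsilon^k(\cdot,y)\|_{L_2(\mathcal{D})}$. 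Dividing through by $\|\tilde\Pi_\varepsilon^k(\cdot,y)\|_{L_2(\mathcal{D})}$ (the estimate is trivial if this vanishes) yields $\|\tilde\Pi_\varepsilon^k(\cdot,y)\|_{L_2(\mathcal{D})}\le C\|Dv\|_{L_2(\mathcal{D})}+CR^{(2-d)/2}$.

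\emph{Step 2 (Caccioppoli on the annulus).} Cover $\mathcal{D}$ by finitely many balls $B_{R/8}(z_i)$, $z_i\in\mathcal{D}$, $i=1,\dots,N(d)$, chosen so that each $B_{R/4}(z_i)\subset B_{5R/4}(y)\setminus B_{R/4}(y)$ and the $B_{R/4}(z_i)$ have bounded overlap. On each $B_{R/4}(z_i)$ the pair $(v,\pi)$ solves the Stokes system with $f=e_k/|\Omega|$, $\|f\|_{L_\infty}\le c_d^{-1}R^{-d}$, so Lemma \ref{171205@lem5}$(a)$ gives $\|Dv\|_{L_2(B_{R/8}(z_i))}\le C\bigl(R^{-1}\|v\|_{L_2(B_{R/4}(z_i))}+R^{(2-d)/2}\bigr)$, where we used $(R/4)^{(d+2)/2}R^{-d}\le CR^{(2-d)/2}$. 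Summing over $i$ and using the bounded overlap together with $\bigcup_iB_{R/4}(z_i)\subset B_{5R/4}(y)\setminus B_{R/4}(y)$ gives $\|Dv\|_{L_2(\mathcal{D})}\le\frac CR\|v\|_{L_2(B_{5R/4}(y)\setminus B_{R/4}(y))}+CR^{(2-d)/2}$. Combining this with Step 1 and recalling $v=G_\varepsilon^{\cdot k}(\cdot,y)$ yields the assertion. The only points requiring a little care—neither of them serious—are producing the solution of the divergence equation on the annulus with a constant depending only on $d$ (handled by rescaling to $B_1\setminus B_{1/2}$), and choosing the covering balls so that their twofold enlargements stay inside the prescribed annulus $B_{5R/4}(y)\setminus B_{R/4}(y)$.
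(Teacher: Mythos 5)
Your proof is correct and follows essentially the same route as the paper's: a divergence-equation duality argument on the annulus to bound $\tilde\Pi^k_\varepsilon$ by $\|Dv\|_{L_2(\mathcal{D})}$ plus a lower-order term, followed by a covering argument with the interior Caccioppoli estimate (Lemma \ref{171205@lem5}(a)) to move from $Dv$ on the annulus to $v$ on the slightly larger annulus. The only cosmetic difference is that you control the lower-order term via the Poincar\'e inequality $\|\phi\|_{L_2(\mathcal{D})}\le CR\|D\phi\|_{L_2(\mathcal{D})}$, whereas the paper uses the Sobolev inequality together with H\"older; both lead to the same $CR^{(2-d)/2}$ bound after using $|\Omega|\ge c_dR^d$.
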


\begin{proof}
Recall the notation \eqref{170714_02}, and set
$$
\tilde{\pi}=\pi-(\pi)_{B_R(y)\setminus B_{R/2}(y)}.
$$
Since $(\tilde{\pi})_{B_R(y)\setminus B_{R/2}(y)}=0$, by the existence of solutions to the divergence equation (see, for instance, \cite{MR2263708}), there exists a function $\phi\in \mathring{W}^1_2(B_R(y)\setminus \overline{B_{R/2}(y)})^d$ such that
$$
\operatorname{div}\phi=\tilde{\pi} \quad \text{in }\, B_R(y)\setminus \overline{B_{R/2}(y)}
$$
and
\begin{equation}		\label{170617@eq3}
\|D\phi\|_{L_2(B_R(y)\setminus B_{R/2}(y))}\le C\|\tilde{\pi}\|_{L_2(B_R(y)\setminus B_{R/2}(y))}.
\end{equation}
Here, by a scaling argument, one can  check that the constant $C$ in the above inequality  depends only on $d$.
We extend $\phi$ by zero on $\bR^d\setminus (B_R(y)\setminus \overline{B_{R/2}(y)})$ and apply $\phi$ as a test function to \eqref{170609@eq2} to get
\begin{align}
\nonumber
\int_{B_R(y)\setminus B_{R/2}(y)}|\tilde{\pi}|^2\,dx&=\int_{B_R(y)\setminus B_{R/2}(y)}\pi \tilde{\pi}\,dx\\
\label{170618_eq1}
&=-\int_{\Omega}A^{\alpha\beta}D_\beta v\cdot D_\alpha \phi\,dx-\dashint_\Omega \phi^k\,dx.
\end{align}
By H\"older's inequality, the Sobolev inequality, and \eqref{170617@eq3},  we have
\begin{align*}
\left|\dashint_\Omega \phi^k\,dx\right|&\le C|\Omega|^{-1}R^{1+d/2}\|\phi\|_{L_{2d/(d-2)}(B_R(y)\setminus B_{R/2}(y))}\\
&\le C|\Omega|^{-1}R^{1+d/2}\|D\phi\|_{L_2(B_R(y)\setminus B_{R/2}(y))}\\
&\le C|\Omega|^{-1}R^{1+d/2}\|\tilde{\pi}\|_{L_2(B_R(y)\setminus B_{R/2}(y))},
\end{align*}
where $C=C(d)$.
Combining these together, and using H\"older's inequality and Young's inequality, we see that
\begin{equation}		\label{170619@eq1}
\int_{B_R(y)\setminus B_{R/2}(y)}|\tilde{\pi}|^2\,dx\le C\int_{B_R(y)\setminus B_{R/2}(y)}|Dv|^2\,dx+C|\Omega|^{-2}R^{2+d},
\end{equation}
where $C=C(d,\lambda)$.

Let $z\in B_R(y)\setminus B_{R/2}(y)$.
Since $B_{R/4}(z)\cap B_\varepsilon(y)=\emptyset$, it follows from \eqref{170609@eq2} that
$$
\left\{
\begin{aligned}
\operatorname{div} v=0 &\quad \text{in }\, B_{R/4}(z),\\
\cL v+\nabla \pi=\frac{e_k}{|\Omega|} &\quad \text{in }\, B_{R/4}(z).
\end{aligned}
\right.
$$
Then by Lemma \ref{171205@lem5} $(a)$ applied to the above system and the fact that $B_{R/4}(z)\subset \big(B_{5R/4}(y)\setminus B_{R/4}(y)\big)$, we have
\begin{equation}		\label{180621@eq4}
\int_{B_{R/8}(z)}|Dv|^2\,dx\le \frac{C}{R^2}\int_{B_{5R/4}(y)\setminus B_{R/4}(y)}|v|^2\,dx+C|\Omega|^{-2}R^{2+d},
\end{equation}
where $C=C(d,\lambda)$.
Because the above inequality holds for any $z\in B_{R}(y)\setminus B_{R/2}(y)$,
by taking points $z^1,\ldots,z^n$, where $n=n(d)$, in $B_R(y)\setminus B_{R/2}(y)$ such that
$$
(B_R(y)\setminus B_{R/2}(y) ) \subset \bigcup_{i=1}^n B_{R/8}(z^i),
$$
and using \eqref{180621@eq4} with $B_{R/8}(z^i)$ in place of $B_{R/8}(z)$, we have
\begin{align}
\nonumber
\int_{B_{R}(y)\setminus B_{R/2}(y)}|Dv|^2\,dx
&\le \sum_{i=1}^n \int_{B_{R/8}(z^i)}|Dv|^2\,dx\\
\label{180621@eq4a}
&\le \frac{C}{R^2}\int_{B_{5R/4}(y)\setminus B_{R/4}(y)} |v|^2\,dx+C|\Omega|^{-2}R^{2+d}.
\end{align}
Combining \eqref{170619@eq1} and \eqref{180621@eq4a}, and using $|\Omega|\ge CR^d$, we get the desired estimate.
\end{proof}

Based on Lemmas \ref{170607@lem1} and \ref{170625@lem1}, we obtain the following uniform estimates for $(G_\varepsilon(\cdot,y), \Pi_\varepsilon(\cdot,y))$ away from the pole $y$.

\begin{lemma}		\label{170710@lem1}
Let $y\in \Omega$, $0<R<\min\{R_0,\operatorname{dist}(y,\partial \Omega)\}$, and $0<\varepsilon\le 1$.
Then we have
\begin{equation}		\label{170626@eq1}
\|G_\varepsilon(\cdot,y)\|_{L_{2d/(d-2)}(\Omega\setminus B_R(y))}+\|DG_\varepsilon(\cdot,y)\|_{L_2(\Omega\setminus B_R(y))}\le CR^{(2-d)/2},
\end{equation}
\begin{equation}		\label{170626@eq2}
\|\Pi_\varepsilon(\cdot,y)\|_{L_2(\Omega\setminus B_R(y))}\le CR^{(2-d)/2},
\end{equation}
where $C=C(d,\lambda,K_0,A_0)$.
\end{lemma}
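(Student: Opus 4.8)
The plan is to fix $y\in\Omega$ and $0<R<\min\{R_0,\operatorname{dist}(y,\partial\Omega)\}$, and to estimate $(G_\varepsilon(\cdot,y),\Pi_\varepsilon(\cdot,y))$ on $\Omega\setminus B_R(y)$ by a dyadic decomposition away from the pole. Write $(v,\pi)=(G_\varepsilon^{\cdot k}(\cdot,y),\Pi_\varepsilon^k(\cdot,y))$ for a fixed $k$. The starting point is the global energy bound \eqref{170602@eq3}, which gives control on all of $\Omega$ but with a factor $|\Omega_\varepsilon(y)|^{(2-d)/(2d)}$ that blows up as $\varepsilon\to0$; the whole point is to trade that for the $\varepsilon$-independent quantity $R^{(2-d)/2}$ by exploiting that $v$ solves the homogeneous-type system \eqref{180608@eq3a} (right-hand side $e_k/|\Omega|$, which is harmless) away from $B_\varepsilon(y)$. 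First I would handle the region near $\partial\Omega$ versus near $y$ separately; since $R<\operatorname{dist}(y,\partial\Omega)$ the set $\Omega\setminus B_R(y)$ need not be interior, so the argument must be purely a Caccioppoli-plus-pointwise-bound estimate on annuli $B_{2^{j}R}(y)\setminus B_{2^{j-1}R}(y)$ that stay inside $\Omega$, together with the trivial global bound \eqref{170602@eq3} on the (bounded) remaining piece far from $y$.

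The key steps, in order: (1) On each dyadic annulus $A_j:=B_{2^jR}(y)\setminus B_{2^{j-1}R}(y)$ contained in $\Omega$ (with $2\varepsilon< 2^{j-1}R$), use Lemma \ref{170607@lem1} to get the pointwise bound $|v|\le C(2^jR)^{2-d}$ on a slightly enlarged annulus, hence $\|v\|_{L_2(\tilde A_j)}\le C(2^jR)^{2-d}(2^jR)^{d/2}=C(2^jR)^{(4-d)/2}$. (2) Apply the interior Caccioppoli inequality Lemma \ref{171205@lem5}(a) to the system \eqref{180608@eq3a} on balls of radius comparable to $2^jR$ covering $A_j$, to convert the $L_2$ bound on $v$ into $\|Dv\|_{L_2(A_j)}\le C(2^jR)^{-1}(2^jR)^{(4-d)/2}+\text{(lower order)}=C(2^jR)^{(2-d)/2}$, and similarly for $\|\pi-(\pi)\|_{L_2(A_j)}$; to pin down the additive constant in $\pi$ one uses Lemma \ref{170625@lem1} on a single annulus and then telescopes the constants $(\pi)_{A_j}-(\pi)_{A_{j+1}}$ across overlapping annuli, or more simply invokes the global bound \eqref{170602@eq3} once on a fixed annulus to anchor the pressure. (3) Sum the geometric series $\sum_j (2^jR)^{(2-d)/2}$, which converges since $d\ge3$ forces the exponent $(2-d)/2<0$, giving $\|Dv\|_{L_2(\Omega\setminus B_R(y))}+\|\pi\|_{L_2(\Omega\setminus B_R(y))}\le CR^{(2-d)/2}$; the $L_{2d/(d-2)}$ bound on $v$ then follows from the Sobolev inequality \eqref{170602@eq2} (applied after subtracting the mean, with the mean controlled by the pointwise bound and the geometry). (4) The outermost annuli (those meeting $\partial\Omega$) are not covered by the interior lemmas, so there one falls back on the global estimate \eqref{170602@eq3}: since $R<\operatorname{dist}(y,\partial\Omega)$ means the problematic boundary piece is at distance $\gtrsim R$ from $y$ and has measure $\lesssim|\Omega|$, and since $|\Omega_\varepsilon(y)|^{(2-d)/(2d)}$ with $\varepsilon\le1$ is... — wait, that still depends on $\varepsilon$. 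The cleaner route: because $B_R(y)\subset\Omega$, $|\Omega|\ge cR^d$, so the trivial bound \eqref{170602@eq3} must itself be refined near the boundary using only that $v$ solves the homogeneous system there too once we are at distance $>\varepsilon$ from $y$ — so in fact all of $\Omega\setminus B_{2\varepsilon}(y)$ is handled by the annulus argument plus a global Caccioppoli on $\Omega\setminus B_{R/2}(y)$ with test function supported away from $y$, and the piece $B_R(y)\setminus B_{2\varepsilon}(y)$ (if $R>2\varepsilon$) contributes the dominant $R^{(2-d)/2}$.

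The main obstacle I expect is step (2)–(4): controlling the pressure $\pi$ without a good reference value, and handling the annuli that touch $\partial\Omega$ where Lemma \ref{171205@lem5}(a) (an interior Caccioppoli) does not directly apply and Lemma \ref{171205@lem5}(b) requires the exterior measure density condition \eqref{170709@eq1}, which is \emph{not} among the hypotheses of Theorem~\ref{T1}. The resolution is that near $\partial\Omega$ one does not localize at all: one applies the global solvability/energy estimate from Lemma \ref{170602@lem2} to $(v,\pi)$ with the global test function $\phi=\eta^2 v$ where $\eta$ vanishes on $B_{R/2}(y)$, which gives a global Caccioppoli-type inequality $\|Dv\|_{L_2(\Omega\setminus B_R(y))}\lesssim R^{-1}\|v\|_{L_2(\Omega\setminus B_{R/2}(y))}+\text{l.o.t.}$ valid up to the boundary because the conormal condition $\cB v+\pi\nu=0$ kills the boundary term, and then bounds $\|v\|_{L_2(\Omega\setminus B_{R/2}(y))}$ by summing the dyadic pointwise bounds from Lemma \ref{170607@lem1} over the interior annuli plus the crude bound on the single outer shell, where the pointwise estimate is available because $\operatorname{dist}(y,\partial\Omega)>R$ keeps us in the interior-regularity regime for the relevant radii. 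Assembling these yields \eqref{170626@eq1} and \eqref{170626@eq2} with $C=C(d,\lambda,K_0,A_0)$.
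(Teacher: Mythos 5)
Your proposal starts from a dyadic decomposition that the paper does not use, and the "main obstacle" you identify -- that neither the interior Caccioppoli (Lemma~\ref{171205@lem5}(a)) nor the boundary one (Lemma~\ref{171205@lem5}(b), which needs the exterior measure condition \eqref{170709@eq1} not assumed in Theorem~\ref{T1}) covers the annuli touching $\partial\Omega$ -- is a genuine gap in that route. Your proposed fix, a single global Caccioppoli-type inequality obtained by plugging $(1-\eta)^2 v$ into the weak formulation with $\eta\equiv 1$ on $B_{R/2}(y)$, is exactly what the paper does, but you misstate the output: because $\nabla\eta$ is supported in $B_R(y)\setminus B_{R/2}(y)$, the right-hand side is $R^{-1}\|v\|_{L_2(B_R(y)\setminus B_{R/2}(y))}$, not $R^{-1}\|v\|_{L_2(\Omega\setminus B_{R/2}(y))}$. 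With the correct, tighter statement, the right-hand side is controlled by one application of the pointwise bound from Lemma~\ref{170607@lem1} on a single annulus (taking $R<\tfrac13\operatorname{dist}(y,\partial\Omega)$ WLOG), and the dyadic decomposition, the outer shells, and the boundary regime never enter. Your weaker statement re-introduces the boundary-region problem you were trying to avoid and makes the argument circular.

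The pressure handling is the other real gap. First, when you apply the cutoff test function you must control $\int_\Omega\pi\,\operatorname{div}\big((1-\eta)^2v\big)\,dx$; this is nontrivial and requires using $\operatorname{div}v=0$ together with Lemma~\ref{170625@lem1} to replace $\pi$ by its mean-zero part on the transition annulus -- your sketch skips this. Second, for the estimate $\|\pi\|_{L_2(\Omega\setminus B_R(y))}\le CR^{(2-d)/2}$ itself, the paper constructs $\phi\in\tilde W^1_2(\Omega)^d$ via Lemma~\ref{170602@lem1} with $\operatorname{div}\phi=\pi I_{\Omega\setminus B_R(y)}$ and tests against $(1-\eta)\phi$; your alternatives are either undeveloped (the "telescoping constants across overlapping annuli" is tied to the problematic dyadic decomposition) or incorrect: "anchoring the pressure" via \eqref{170602@eq3} fails because that bound contains the factor $|\Omega_\varepsilon(y)|^{(2-d)/(2d)}$, which blows up as $\varepsilon\to0$, so it cannot furnish an $\varepsilon$-independent bound. (The paper invokes \eqref{170602@eq3} only to dispose of the easy case $R/16\le\varepsilon\le1$, where the two scales are comparable; you should also address that case separately.)

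In short: you correctly identify the fatal defect of the dyadic route and land on the right idea -- one global Caccioppoli with a cutoff away from $y$, pointwise bound on a single transition annulus, divergence trick for the pressure -- but the execution as written has a mis-stated Caccioppoli inequality, a circularity, and an undeveloped (and, in one variant, wrong) pressure argument.
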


\begin{proof}
Fix $y\in \Omega$ and $k\in \{1,\ldots,d\}$.
Recall the notation \eqref{170714_02}.
We first prove that the estimate \eqref{170626@eq1} holds.
Certainly, we may assume that  $0<R< \frac{1}{3}\operatorname{dist}(y,\partial \Omega)$.
For $R/16\le \varepsilon\le 1$, we obtain by \eqref{170602@eq2} and \eqref{170602@eq3} that
$$
\|v\|_{L_{2d/(d-2)}(\Omega)}+\|Dv\|_{L_2(\Omega)}\le C|B_{R/16}(y)|^{(2-d)/(2d)}\le CR^{(2-d)/2},
$$
which gives \eqref{170626@eq1}.
Assume $0<\varepsilon<R/16$, and let $\eta$ be an infinitely differentiable function on $\bR^d$ satisfying
\begin{equation}		\label{170628@eq1}
0\le \eta\le 1, \quad \eta\equiv 1 \, \text{ on }\, B_{R/2}(y), \quad \operatorname{supp}\eta\subset B_R(y), \quad |\nabla\eta|\le CR^{-1}.
\end{equation}
Applying $(1-\eta)^2v$ as a test function to \eqref{170609@eq2}, we have
\begin{equation}		\label{171129@eq3}
\|(1-\eta)Dv\|_{L_2(\Omega)}^2 \le \frac{C}{R^2}\|v\|^2_{L_2(B_R(y)\setminus B_{R/2}(y))}+ C(I_1+I_2),
\end{equation}
where
$$
I_1=\left|\int_\Omega \pi\operatorname{div}((1-\eta)^2v)\,dx\right|, \quad I_2=\left|\dashint_\Omega (1-\eta)^2v^k\,dx\right|.
$$
Notice from  $(v)_\Omega=0$ and \eqref{170607@eq8} that
$$
I_2=\left|\dashint_\Omega (\eta^2-2\eta)v\,dx\right|\le \frac{C}{|\Omega|}\|v\|_{L_1(B_R(y))}\le CR^{2-d},
$$
where $C=C(d,\lambda,K_0,A_0)$.
To estimate $I_1$, we use $\operatorname{div} v=0$ to get
\begin{align*}
I_1&=\left|\int_\Omega \pi\operatorname{div}\big((\eta^2-2\eta)v\big)\,dx\right|=\bigg|\int_{B_R(y)\setminus B_{R/2}(y)}\tilde{\pi}\nabla (\eta^2-2\eta)\cdot v\,dx\bigg|,
\end{align*}
where $\tilde{\pi}=\pi-(\pi)_{B_R(y)\setminus B_{R/2}(y)}$.
Then by H\"older's inequality and Lemma \ref{170625@lem1}, we find
$$
I_1\le \frac{C}{R^2}\|v\|_{L_2(B_{5R/4}(y)\setminus B_{R/4}(y))}^2+CR^{2-d},
$$
where $C=C(d,\lambda)$.
Combining \eqref{171129@eq3} and the estimates of $I_1$ and $I_2$,
we have
\begin{equation}		\label{171129@eq3a}
\|(1-\eta)Dv\|_{L_2(\Omega)} \le \frac{C}{R}\|v\|_{L_2(B_{5R/4}(y)\setminus B_{R/4}(y))}+CR^{(2-d)/2},
\end{equation}
where $C=C(d,\lambda,K_0,A_0)$.
Notice from $(v)_\Omega=0$ and \eqref{170602@eq2} that
\begin{align}
\nonumber
&\|(1-\eta)v\|_{L_{2d/(d-2)}(\Omega)}\\
\nonumber
&\le \big\|(1-\eta)v-((1-\eta)v)_\Omega\|_{L_{2d/(d-2)}(\Omega)}+|\Omega|^{\frac{d-2}{2d}}\big|((1-\eta)v)_\Omega\big|\\
\nonumber
&= \big\|(1-\eta)v-((1-\eta)v)_\Omega\|_{L_{2d/(d-2)}(\Omega)}+|\Omega|^{\frac{d-2}{2d}}\big|(\eta v)_\Omega\big|\\
\label{171201@eq3a}
&\le C\|(1-\eta)Dv\|_{L_2(\Omega)}+\frac{C}{R}\|v\|_{L_2(B_R(y)\setminus B_{R/2}(y))}+CR^{-(2+d)/2}\| v\|_{L_{1}(B_R(y))},
\end{align}
where $C=C(d,K_0)$.
This together with \eqref{171129@eq3a} and \eqref{170607@eq8} yields that
\begin{equation}		\label{171129@eq4}
\begin{aligned}
&\|(1-\eta)v\|_{L_{2d/(d-2)}(\Omega)}+\|(1-\eta)Dv\|_{L_2(\Omega)}\\
&\le \frac{C}{R}\|v\|_{L_2(B_{5R/4}(y)\setminus B_{R/4}(y))}+CR^{(2-d)/2}.
\end{aligned}
\end{equation}
Since we have
$$
2\varepsilon<\frac{R}{8}<\frac{|x-y|}{2}<\frac{5R}{8}<\min\left\{ R_0, \frac{1}{3}\operatorname{dist}(y,\partial \Omega)\right\}
$$
for all $x\in B_{5R/4}(y)\setminus B_{R/4}(y)$, we obtain by Lemma \ref{170607@lem1} that
$$
\frac{1}{R}\|v\|_{L_2(B_{5R/4}(y)\setminus B_{R/4}(y))}\le CR^{(2-d)/2}.
$$
Therefore, we get the estimate \eqref{170626@eq1} from \eqref{171129@eq4} and the above inequality.

We now turn to the estimate \eqref{170626@eq2}.
Similar to the above, it suffices to show the estimate with $0<R<\frac{1}{3}\operatorname{dist}(y,\partial \Omega)$ and $0<\varepsilon<R/16$.
By Lemma \ref{170602@lem1} and \eqref{170602@eq2}, there exists $\phi\in \tilde{W}^1_2(\Omega)^d$ such that
\begin{equation}		\label{170628@eq2a}
\operatorname{div}\phi=\pi I_{\Omega\setminus B_R(y)} \quad \text{in }\, \Omega
\end{equation}
and
\begin{equation}		\label{170628@eq4a}
\|\phi\|_{L_{2d/(d-2)}(\Omega)}+\|D\phi\|_{L_2(\Omega)}\le C(d,K_0)\|\pi\|_{L_2(\Omega\setminus B_R(y))}.
\end{equation}
We apply $(1-\eta)\phi$ as a test function to \eqref{170609@eq2}, where $\eta$ is as in \eqref{170628@eq1}, to get
\begin{equation}		\label{170628@eq2}
\begin{aligned}
&\int_\Omega \pi \operatorname{div}((1-\eta)\phi)\,dx\\
&=-\int_\Omega A^{\alpha\beta}D_\beta v\cdot D_\alpha ((1-\eta)\phi)\,dx-\dashint_\Omega (1-\eta)\phi^{k}\,dx.
\end{aligned}
\end{equation}
Using \eqref{170628@eq2a},
\begin{align*}
\int_\Omega \pi\operatorname{div}((1-\eta)\phi)\,dx&=\int_{\Omega}\pi \operatorname{div}\phi\,dx-\int_\Omega \pi\operatorname{div}(\eta \phi)\,dx\\
&=\int_{\Omega \setminus B_R(y)}|\pi|^2\,dx-\int_{B_R(y)\setminus B_{R/2}(y)}\tilde{\pi} \nabla\eta \cdot \phi\,dx,
\end{align*}
where $\tilde{\pi}=\pi-(\pi)_{B_R(y)\setminus B_{R/2}(y)}$.
Combining these together and using H\"older's inequality, we have
$$
\begin{aligned}
\|\pi\|_{L_2(\Omega\setminus B_R(y))}^2 &\le C\|Dv\|_{L_2(\Omega\setminus B_{R/2}(y))}\big(\|\phi\|_{L_{2d/(d-2)}(\Omega)}+\|D\phi\|_{L_2(\Omega)}\big)\\
&\quad +C\big(\|\tilde{\pi}\|_{L_2(B_R(y)\setminus B_{R/2}(y))}+|\Omega|^{(2-d)/2d}\big)\|\phi\|_{L_{2d/(d-2)}(\Omega)},
\end{aligned}
$$
and thus, by \eqref{170628@eq4a} and $|B_R|\le |\Omega|$, we obtain
\begin{equation}		\label{180625@eq4a}
\|\pi\|_{L_2(\Omega\setminus B_R(y))}\le C\big(\|Dv\|_{L_2(\Omega\setminus B_{R/2}(y))}+\|\tilde{\pi}\|_{L_2(B_R(y)\setminus B_{R/2}(y))}+R^{(2-d)/2}\big).
\end{equation}
This inequality together with Lemma \ref{170625@lem1} and \eqref{170626@eq1} implies \eqref{170626@eq2}.
The lemma is proved.
\end{proof}

From Lemma \ref{170710@lem1}, we get the following uniform weak type estimates.

\begin{lemma}		\label{171130@lem1}
Let $y\in \Omega$ and $0<\varepsilon\le 1$.
Then we have
\begin{align*}
\big|\{x\in \Omega:|G_\varepsilon(x,y)|>t\}\big|\le C t^{-d/(d-2)}, \quad \forall t> \min\{R_0,\operatorname{dist}(y,\partial \Omega)\}^{2-d},\\
\big|\{x\in \Omega:|D_xG_\varepsilon(x,y)|>t\}\big|\le C t^{-d/(d-1)}, \quad \forall t> \min\{R_0,\operatorname{dist}(y,\partial \Omega)\}^{1-d},\\
\big|\{x\in \Omega:|\Pi_\varepsilon(x,y)|>t\}\big|\le C t^{-d/(d-1)}, \quad \forall t> \min\{R_0,\operatorname{dist}(y,\partial \Omega)\}^{1-d},
\end{align*}
where $C=C(d,\lambda,K_0,A_0)$.
\end{lemma}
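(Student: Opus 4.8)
The plan is to derive the three weak-type bounds from the strong estimates in Lemma \ref{170710@lem1} by a standard dyadic decomposition of $\Omega$ into annular shells around the pole $y$, combined with the pointwise bound of Lemma \ref{170607@lem1} near $y$. Write $d_y = \min\{R_0,\operatorname{dist}(y,\partial\Omega)\}$, and let me explain the argument for $|\{x\in\Omega:|G_\varepsilon(x,y)|>t\}|$; the other two are completely analogous with the obvious changes of exponents. First I would fix $t>d_y^{2-d}$ and set $R = t^{1/(2-d)}$, so that $0<R<d_y$. The key geometric observation is: if $|G_\varepsilon(x,y)|>t$ then $x\in B_R(y)$ up to a set controlled by the strong estimate. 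Indeed, split
$$
\{x\in\Omega:|G_\varepsilon(x,y)|>t\} \subset B_R(y) \cup \{x\in\Omega\setminus B_R(y):|G_\varepsilon(x,y)|>t\}.
$$
The first set has measure $\le CR^d = Ct^{-d/(d-2)}$. For the second set, Chebyshev's inequality and estimate \eqref{170626@eq1} of Lemma \ref{170710@lem1} give
$$
\big|\{x\in\Omega\setminus B_R(y):|G_\varepsilon(x,y)|>t\}\big| \le t^{-2d/(d-2)}\|G_\varepsilon(\cdot,y)\|_{L_{2d/(d-2)}(\Omega\setminus B_R(y))}^{2d/(d-2)} \le t^{-2d/(d-2)}\big(CR^{(2-d)/2}\big)^{2d/(d-2)} = Ct^{-2d/(d-2)}R^{-d}.
$$
Since $R^{-d} = t^{d/(2-d)} = t^{-d/(d-2)}$, this is $Ct^{-2d/(d-2)}t^{-d/(d-2)}$... wait — let me recompute: $R^{-d}=t^{-d/(d-2)}$, and $t^{-2d/(d-2)}\cdot t^{d/(d-2)}$ would be needed, so in fact I should use $R=t^{1/(2-d)}$ which gives $R^{(2-d)/2\cdot 2d/(d-2)}=R^{-d}=t^{d/(2-d)}=t^{-d/(d-2)}$, hence the bound is $C\,t^{-2d/(d-2)}\cdot t^{-d/(d-2)}$; this is too small, which is fine — it is $\le Ct^{-d/(d-2)}$. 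Combining the two pieces gives $|\{|G_\varepsilon(\cdot,y)|>t\}|\le Ct^{-d/(d-2)}$, as claimed.

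For the gradient estimate, fix $t>d_y^{1-d}$ and set $R=t^{1/(1-d)}$ so $0<R<d_y$. Decompose as before: $|B_R(y)|\le CR^d = Ct^{-d/(d-1)}$, and on $\Omega\setminus B_R(y)$, Chebyshev together with the second half of \eqref{170626@eq1} gives
$$
\big|\{x\in\Omega\setminus B_R(y):|D_xG_\varepsilon(x,y)|>t\}\big| \le t^{-2}\|DG_\varepsilon(\cdot,y)\|_{L_2(\Omega\setminus B_R(y))}^2 \le t^{-2}CR^{2-d} = Ct^{-2}t^{(d-2)/(d-1)} = Ct^{-d/(d-1)},
$$
since $-2 + (d-2)/(d-1) = (-2(d-1)+(d-2))/(d-1) = -d/(d-1)$. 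Adding the two contributions yields the second bound. The third bound, for $\Pi_\varepsilon(\cdot,y)$, is identical to the gradient case: with the same choice $R=t^{1/(1-d)}$, bound the part inside $B_R(y)$ by $|B_R(y)|\le Ct^{-d/(d-1)}$ and apply Chebyshev with estimate \eqref{170626@eq2} of Lemma \ref{170710@lem1} to the part in $\Omega\setminus B_R(y)$, getting $t^{-2}\|\Pi_\varepsilon(\cdot,y)\|_{L_2(\Omega\setminus B_R(y))}^2\le Ct^{-2}R^{2-d} = Ct^{-d/(d-1)}$.

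I do not expect a genuine obstacle here: the lemma is a routine consequence of Lemma \ref{170710@lem1} via Chebyshev and optimizing the split radius, and all constants depend only on $d,\lambda,K_0,A_0$ because those are the only parameters in Lemma \ref{170710@lem1}. The one point requiring a small amount of care is the bookkeeping of exponents to confirm that the optimal radius $R$ (equating the volume term $R^d$ with the tail term) is exactly $t^{1/(2-d)}$ for $G_\varepsilon$ and $t^{1/(1-d)}$ for $D_xG_\varepsilon$ and $\Pi_\varepsilon$, and that this $R$ indeed lies in $(0,d_y)$ precisely under the stated lower bounds on $t$ — which is why the restrictions $t>d_y^{2-d}$ and $t>d_y^{1-d}$ appear. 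I would also remark that the restriction on $t$ ensures $\Omega\setminus B_R(y)$ is the region where Lemma \ref{170710@lem1} applies (namely $R<\min\{R_0,\operatorname{dist}(y,\partial\Omega)\}$), so no separate treatment of large $t$ is needed.
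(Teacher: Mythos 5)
Your proof follows exactly the paper's strategy: split the superlevel set by a ball $B_R(y)$ with $R$ chosen to balance the two pieces, bound the part inside $B_R(y)$ by its volume, and bound the part in $\Omega\setminus B_R(y)$ via Chebyshev using Lemma~\ref{170710@lem1} (with exponent $2d/(d-2)$ for $G_\varepsilon$ and exponent $2$ for $D_xG_\varepsilon$ and $\Pi_\varepsilon$, the natural ``obvious modifications'' the paper alludes to). One small sign slip in your $G_\varepsilon$ computation: with $R=t^{1/(2-d)}$ one has $R^{-d}=t^{-d/(2-d)}=t^{d/(d-2)}$, not $t^{-d/(d-2)}$, so the tail term is exactly $Ct^{-2d/(d-2)}\cdot t^{d/(d-2)}=Ct^{-d/(d-2)}$ rather than a strictly smaller power; the conclusion is unchanged.
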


\begin{proof}
We only prove the last inequality because the others are the same with obvious modifications.
For $y\in \Omega$ and $t> \min\{R_0,\operatorname{dist}(y,\partial \Omega)\}^{1-d}$, we set
$$
A_t=\{x\in \Omega:|\Pi_\varepsilon(x,y)|>t\}, \quad R=t^{-1/(d-1)}<\min\{R_0,\operatorname{dist}(y,\partial \Omega)\}.
$$
Then by \eqref{170626@eq2}, we have
$$
|A_t\setminus B_R(y)|\le \frac{1}{t^2}\int_{A_t\setminus B_R(y)}|\Pi_\varepsilon(x,y)|^2\,dx\le Ct^{-d/(d-1)}.
$$
On the other hand, we have
$$
|A_t\cap B_R(y)|\le CR^d=Ct^{-d/(d-1)}.
$$
Combining the above inequalities, we get the desired estimate.
\end{proof}

The following uniform $L_q$-estimates are easy consequences of Lemma \ref{171130@lem1}.

\begin{lemma}		\label{170710@lem2}
Let $y\in \Omega$, $0<R<\min\{R_0,\operatorname{dist}(y,\partial \Omega)\}$, and $0<\varepsilon\le 1$.
Then we have
\begin{align*}
\|G_{\varepsilon}(\cdot,y)\|_{L_q(B_R(y))}\le CR^{2-d+d/q}, \quad q\in [1, d/(d-2)),\\
\|DG_{\varepsilon}(\cdot,y)\|_{L_q(B_R(y))}\le CR^{1-d+d/q}, \quad q\in [1,d/(d-1)),\\
\|\Pi_{\varepsilon}(\cdot,y)\|_{L_q(B_R(y))}\le CR^{1-d+d/q}, \quad q\in [1,d/(d-1)),
\end{align*}
where $C=C(d,\lambda,K_0,A_0,q)$.
\end{lemma}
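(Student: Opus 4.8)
The plan is to derive all three $L_q$-estimates from the corresponding weak-type (Marcinkiewicz) bounds in Lemma \ref{171130@lem1} by integrating the distribution function, exactly as one does in the classical Gr\"uter--Widman / Hofmann--Kim argument. I will treat the three quantities $G_\varepsilon(\cdot,y)$, $D_xG_\varepsilon(\cdot,y)$, and $\Pi_\varepsilon(\cdot,y)$ uniformly, since the scaling exponents differ only by a shift: for $G_\varepsilon$ the weak-$L_{d/(d-2)}$ bound forces integrability up to (but not including) exponent $d/(d-2)$, while for $D_xG_\varepsilon$ and $\Pi_\varepsilon$ the weak-$L_{d/(d-1)}$ bound gives integrability up to exponent $d/(d-1)$.

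First I would fix $y\in\Omega$ and $0<R<\min\{R_0,\operatorname{dist}(y,\partial\Omega)\}$ and work on the set $E=B_R(y)$ (intersected with $\Omega$, but here $B_R(y)\subset\Omega$). Write $w$ for any one of the three functions and let $s$ be the corresponding weak-type exponent ($d/(d-2)$ or $d/(d-1)$), and let $t_0=\min\{R_0,\operatorname{dist}(y,\partial\Omega)\}^{2-d}$ or $\min\{R_0,\operatorname{dist}(y,\partial\Omega)\}^{1-d}$ accordingly; note $t_0\le R^{2-d}$ or $R^{1-d}$ since $R<\min\{R_0,\operatorname{dist}(y,\partial\Omega)\}$. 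Then use the layer-cake formula
$$
\int_{B_R(y)}|w|^q\,dx=q\int_0^\infty t^{q-1}\big|\{x\in B_R(y):|w(x)|>t\}\big|\,dt.
$$
Split the integral at the threshold $\tau:=R^{2-d}$ (resp.\ $R^{1-d}$). For $t\le\tau$ bound the measure of the superlevel set trivially by $|B_R(y)|\le CR^d$, giving a contribution $\lesssim R^d\tau^q=R^{d+q(2-d)}=R^{2-d+d/q}\cdot R^{(d-2)(q-1)+\cdots}$; more simply this term equals $CR^d\cdot R^{q(2-d)}$ which is exactly $CR^{d+q(2-d)}=C(R^{2-d+d/q})^q$ after noting $d+q(2-d)=q(2-d+d/q)$. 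For $t>\tau\ge t_0$ apply Lemma \ref{171130@lem1}: the superlevel set has measure $\le Ct^{-s}$, so the tail contributes $\lesssim\int_\tau^\infty t^{q-1-s}\,dt$, which converges precisely because $q<s$, and evaluates to $C\tau^{q-s}=CR^{(2-d)(q-s)}$ (resp.\ $R^{(1-d)(q-s)}$); a direct exponent check shows $(2-d)(q-s)=q(2-d)+d$ when $s=d/(d-2)$, and similarly $(1-d)(q-s)=q(1-d)+d$ when $s=d/(d-1)$, so this tail term is also $C(R^{2-d+d/q})^q$ (resp.\ $C(R^{1-d+d/q})^q$). Taking $q$-th roots and combining the two pieces gives the claimed bounds.

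The only genuinely delicate point is bookkeeping: one must verify that $t_0$ does not exceed the threshold $\tau=R^{2-d}$ or $R^{1-d}$ at which I split, so that Lemma \ref{171130@lem1} is applicable on the entire tail $(\tau,\infty)$; this is immediate from $R<\min\{R_0,\operatorname{dist}(y,\partial\Omega)\}$ and monotonicity of $r\mapsto r^{2-d}$ (resp.\ $r^{1-d}$) for $d\ge3$. The convergence of the tail integral is exactly the strict inequality $q<d/(d-2)$ (resp.\ $q<d/(d-1)$) in the hypothesis, which is why the endpoint exponents are excluded. Everything else is the routine exponent arithmetic sketched above, and the constant $C_q$ blows up as $q$ approaches the endpoint through the factor $1/(s-q)$ coming from $\int_\tau^\infty t^{q-1-s}\,dt$.
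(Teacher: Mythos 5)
Your proposal is correct and takes essentially the same route as the paper: both proofs use the weak-type bounds of Lemma \ref{171130@lem1} together with the layer-cake (distribution function) formula, splitting around the threshold $R^{2-d}$ (resp.\ $R^{1-d}$) and using that $R<\min\{R_0,\operatorname{dist}(y,\partial\Omega)\}$ puts the split point above the regime where the weak-type estimate applies; the paper merely phrases the split as $B_R(y)=(B_R(y)\setminus A_t)\cup(B_R(y)\cap A_t)$ before invoking the layer-cake formula on $A_t$, whereas you split the $t$-integral directly, which is an equivalent bookkeeping. Your exponent checks $(2-d)(q-s)=q(2-d)+d$ for $s=d/(d-2)$ and $(1-d)(q-s)=q(1-d)+d$ for $s=d/(d-1)$, and the observation that the constant degenerates like $1/(s-q)$ at the endpoint, are all correct.
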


\begin{proof}
We only prove the last inequality because the others are the same with obvious modifications.
Let $y\in \Omega$, $0<R<\min\{R_0,\operatorname{dist}(y,\partial \Omega)\}$, and $q\in [1,d/(d-1))$, and set
$$
t=R^{1-d}, \quad A_t=\{x\in \Omega:|\Pi_\varepsilon(x,y)|>t\}.
$$
Then we have
\begin{align*}
\int_{B_R(y)}|\Pi_\varepsilon(x,y)|^q\,dx&=\int_{B_R(y)\setminus A_t}|\Pi_\varepsilon(x,y)|^q\,dx+\int_{B_R(y)\cap A_t}|\Pi_\varepsilon(x,y)|^q\,dx\\
&\le C R^{(1-d)q+d}+\int_{A_t}|\Pi_\varepsilon(x,y)|^q\,dx.
\end{align*}
Notice from the last inequality in Lemma \ref{171130@lem1} that
\begin{align*}
\int_{A_t}|\Pi_\varepsilon(x,y)|^q\,dx&=q\int_0^{\infty} s^{q-1}\big|\{x\in A_t:|\Pi_\varepsilon(x,y)|>s\}\big|\,ds\\
&\le q\int_0^t s^{q-1} |A_t|\,ds+q\int_t^\infty s^{q-1} |A_s|\,ds\\
&\le C R^{(1-d)q+d}.
\end{align*}
Combining the above inequalities, we get the desired estimate.
\end{proof}

\section{Proofs of main theorems}		\label{S5}
Throughout this section, for $y\in \Omega$ and $\varepsilon\in (0,1]$, we denote by $(G_\varepsilon(\cdot,y), \Pi_\varepsilon(\cdot,y))$ the approximated Green function constructed in Section \ref{S4}.

\begin{proof}[Proof of Theorem \ref{T1}]
The proof is a modification of the proof of  \cite[Theorem 4.1]{MR2341783}.
Let $y\in \Omega$ and  $\varepsilon\in (0,1]$.
By Lemmas \ref{170710@lem1} and \ref{170710@lem2}, there exist a sequence $\{\varepsilon_\rho\}_{\rho=1}^\infty$ tending to zero and a pair $(G(\cdot,y),\Pi(\cdot,y))$ such that for $0<R<\min\{R_0,\operatorname{dist}(y,\partial \Omega)\}$,
\begin{equation}		\label{170704@eq2}
\begin{aligned}
G_{\varepsilon_\rho}(\cdot,y) \rightharpoonup G(\cdot,y) &\quad \text{weakly in }\, W^1_2(\Omega\setminus B_R(y))^{d\times d},\\
\Pi_{\varepsilon_\rho}(\cdot,y) \rightharpoonup \Pi(\cdot,y) &\quad \text{weakly in }\, L_2(\Omega\setminus B_R(y))^{d},
\end{aligned}
\end{equation}
and
\begin{equation}		\label{170704@eq2a}
\begin{aligned}
G_{\varepsilon_\rho}(\cdot,y) \rightharpoonup G(\cdot,y) &\quad \text{weakly in }\, W^1_q(B_R(y))^{d\times d},\\
\Pi_{\varepsilon_\rho}(\cdot,y) \rightharpoonup \Pi(\cdot,y) &\quad \text{weakly in }\, L_q(B_R(y))^{d},
\end{aligned}
\end{equation}
where $1<q<d/(d-1)$.
Then one can easily check that $(G(\cdot,y),\Pi(\cdot,y))$ satisfies properties (a) -- (c) in Definition \ref{D1}, which means that $(G,\Pi)$ is the Green function of $\cL$ in the domain $\Omega$.
We note that by the property (b) and Assumption \ref{A1}, $G(\cdot,y)$ is continuous in $\Omega\setminus \{y\}$. More precisely, we choose a version of $G$ which is continuous in $\Omega \setminus \{y\}$ and denote it again by $G$.
We also remark that the identity \eqref{180123@eq1} holds for {\emph{all}} $y\in \Omega$ if $g\equiv 0$.
Indeed, if $(u,p)\in \tilde{W}^1_2(\Omega)^d\times L_2(\Omega)$ satisfies \eqref{170129@eq3} with $g\equiv 0$, then by Assumption \ref{A1} there is a continuous version of $u$ in $\Omega$, denoted again by $u$, satisfying
$$
\dashint_{\Omega_{\varepsilon_\rho}(y)} u\,dx=-\int_\Omega G_{\varepsilon_\rho}(x,y)^\top f(x)\,dx \quad \text{for all \,$y\in \Omega$}.
$$
By taking $\rho\to \infty$ and using the continuity of $u$, we have
\begin{equation}		\label{180129@eq5}
u(y)=-\int_\Omega G(x,y)^\top f(x)\,dx \quad \text{for all \,$y\in \Omega$}.
\end{equation}
The estimates $i)$ and $ii)$ in the theorem are easy consequences of Lemma \ref{170710@lem1} and \eqref{170704@eq2}.
Thus following the proofs of Lemmas \ref{171130@lem1} and \ref{170710@lem2}, we see that the estimates $iii)$ -- $viii)$ hold.
To show \eqref{170702@eq1}, let $x,y\in \Omega$ with $0<|x-y|<\frac{1}{2}\min\{R_0,\operatorname{dist}(y,\partial \Omega)\}$, and denote $r=|x-y|/2$.
Since $(G(\cdot,y), \Pi(\cdot,y))$ satisfies
$$
\left\{
\begin{aligned}
\operatorname{div} G(\cdot,y)=0 \quad &\text{in }\, B_r(x),\\
\cL G(\cdot,y)+\nabla \Pi(\cdot,y)=\frac{1}{|\Omega|}I \quad &\text{in }\, B_r(x),
\end{aligned}
\right.
$$
by Assumption \ref{A1}, H\"older's inequality, and the estimate $i)$ in the theorem, we have
\begin{align*}
|G(x,y)|
&\le C\left(r^{-d/2}\|G(\cdot,y)\|_{L_2(B_r(x))}+\frac{r^2}{|\Omega|}\right)\\
&\le C\big(r^{(2-d)/2}\|G(\cdot,y)\|_{L_{2d/(d-2)}(B_r(x))}+r^{2-d}\big)\\
&\le C\big(r^{(2-d)/2}\|G(\cdot,y)\|_{L_{2d/(d-2)}(\Omega\setminus B_r(y))}+r^{2-d}\big)\\
&\le C r^{2-d},
\end{align*}
where $C=C(d,\lambda,K_0,A_0)$.
This gives the pointwise bound \eqref{170702@eq1}.

For $x\in \Omega$ and $\sigma\in (0,1]$, let $(G_\sigma^*(\cdot,x),\Pi^*_\sigma(\cdot,x))$ be the approximated Green function for $\cL^*$, i.e., if we set $w=w_{\sigma,x,\ell}$ as the $\ell$-th column of $G_\sigma^*(\cdot,x)$ and $\kappa=\kappa_{\sigma,x,\ell}$ as the $\ell$-th component of $\Pi^*_\sigma(\cdot,x)$, then $(w,\kappa)\in \tilde{W}^1_2(\Omega)^d\times L_2(\Omega)$ satisfies
\begin{equation}		\label{171130@eq2}
\left\{
\begin{aligned}
\operatorname{div} w=0 \quad &\text{in }\ \Omega,\\
\cL^* w+\nabla \kappa=\Phi_{\sigma,x} e_\ell\quad &\text{in }\ \Omega,\\
\cB^* w+\kappa\nu = 0 \quad &\text{on }\ \partial \Omega,
\end{aligned}
\right.
\end{equation}
where $\Phi_{\sigma,x}$ is given in \eqref{170704@eq1}.
By proceeding similarly as above, there exist a sequence $\{\sigma_\tau\}_{\tau=1}^\infty$ tending to zero and the Green function $(G^*(\cdot,x),\Pi^*(\cdot,x))$ for $\cL^*$ such that $\left(G^*_{\sigma_\tau}(\cdot,x), \Pi^*_{\sigma_\tau}(\cdot,x)\right)$ and $\left(G^*(\cdot,x), \Pi^*(\cdot,x)\right)$ satisfy the natural counterparts of \eqref{170704@eq2}, \eqref{170704@eq2a}, and the properties of the Green function for $\cL$.
Notice from \eqref{180129@eq5} that
\begin{equation}		\label{180208@B1}
G^*_\sigma(y,x)=\dashint_{\Omega_\sigma(x)} G(z,y)^\top\,dz \quad \text{for all \,$x,y\in \Omega$}.
\end{equation}
Then by the continuity of $G(\cdot,y)$ on $\Omega\setminus \{y\}$, we have
\begin{equation}		\label{180208@A1}
\lim_{\sigma\to 0}G_{\sigma}^*(y,x)= G(x,y)^\top \quad \text{for all \,$ x,y\in \Omega$ \,with\, $x\neq y$}.
\end{equation}

Now we prove \eqref{170702@eq2}.
Let $y\in \Omega$ be given.
Then there exists a measure zero set $N_y\subset \Omega$ containing $y$ such that, by passing to a subsequence,
\begin{equation}		\label{180208@A1a}
\lim_{\rho\to \infty} G_{\varepsilon_\rho}(x,y)=G(x,y) \quad \text{for all }\, x\in \Omega\setminus N_y.
\end{equation}
Indeed, since it holds that
$$
\|G_{\varepsilon_\rho}(\cdot,y)\|_{W^1_1(\Omega)}\le C(d,\lambda,K_0,A_0, R_0, \operatorname{dist}(y,\partial \Omega)),
$$
by the Rellich-Kondrachov compactness theorem, for a sufficiently small $\delta>0$, there exists a subsequence of $\{G_{\varepsilon_\rho}(\cdot,y)\}$ which converges a.e. to $G(\cdot,y)$ on $\Omega_\delta$, where $\Omega_\delta$ is a smooth subdomain satisfying
$$
\{x\in \Omega:B_\delta(x)\subset \Omega\}\subset \Omega_\delta \subset \Omega.
$$
Thus by a diagonalization process, one can easily see that \eqref{180208@A1a} holds.
Combining \eqref{180208@A1a} and the counterpart of \eqref{180208@A1}, we have
$$
G(x,y)=G^*(y,x)^\top
$$
for all $x\in \Omega\setminus N_y$.
Similarly, we see that the above identity holds for all $y\in \Omega\setminus N_x$.
This gives \eqref{170702@eq2}.
An easy consequence of \eqref{170702@eq2} and the counterpart of \eqref{180208@B1} is that
$$
G_{\varepsilon}(x,y)=\dashint_{\Omega_\varepsilon(y)} G(x,z)\,dz \quad \text{for all \,$x,y\in \Omega$}.
$$

We finish the proof of Theorem \ref{T1} with the following remark.
One may prove the existence of the Green functions without the continuity condition in Assumption \ref{A1}.
In this case, the continuity of the Green functions is not guaranteed and the estimate \eqref{170702@eq1} should be replaced by
$$
\operatorname*{ess\,sup}_{B_{|x-y|/4}(x)}|G(\cdot ,y)|\le C|x-y|^{2-d}.
$$
Moreover, it is questionable whether one can prove \eqref{170702@eq2}.
Indeed, because of the lack of the continuity for solutions (containing the Green functions and the approximated Green functions), the identity in \eqref{180208@B1} holds for all $x\in \Omega$ and $y\in \Omega\setminus N_x$, where $N_x\subset \Omega$ is a measure zero set depending on $x$.
Thus if we take the limit as $\sigma\to 0$ in \eqref{180208@B1}, then the right-hand side is not well defined, which means the type of result in \eqref{180208@A1} is not available.
This makes it impossible to obtain \eqref{170702@eq2} by using our argument.
On the other hand, if one has a modulus of continuity estimate such as a H\"older estimate for the solution $u$ in Assumption \ref{A1}, then by
following the argument in \cite[Section 3.6]{MR2341783}, one can show that  \eqref{170702@eq2} holds for all $x,y\in \Omega$ with $x\neq y$.
\end{proof}

We now turn to Theorem \ref{T2}, which is about the global pointwise bound and various boundary estimates for the Green function.
To prove the theorem, we use the following boundary estimate.

\begin{lemma}		\label{170708@lem1}
Let $\Omega$ be a bounded domain in $\bR^d$ with $|\Omega|\ge m_0>0$.
Suppose that Assumptions \ref{A0} and  \ref{A1} hold, and  there exists a constant $\theta>0$ satisfying \eqref{170709@eq1}.
Let $y\in \Omega$, $y_0\in \partial \Omega$, $0<R< R_0$, and $0<\varepsilon\le 1$ with
$$
B_\varepsilon(y)\subset  B_{R/8}(y_0).
$$
Then we have
$$
\|{\Pi}_\varepsilon(\cdot,y)\big\|_{L_2(\Omega_{R/2}(y_0)\setminus B_{R/4}(y_0))}\le \frac{C}{R}\|G_\varepsilon(\cdot,y)\|_{L_2(\Omega_{5R/8}(y_0)\setminus B_{R/8}(y_0))}+C,
$$
where $C=C(d,\lambda,m_0,\theta)$.
\end{lemma}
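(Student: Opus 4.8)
The plan is to follow the argument for the interior pressure estimate in Lemma \ref{170625@lem1}, now localized at the boundary point $y_0$ and using the conormal boundary condition to recover the full $L_2$-norm of the pressure rather than only its oscillation. Fix $k\in\{1,\dots,d\}$ and write $(v,\pi)=(G_\varepsilon^{\cdot k}(\cdot,y),\Pi_\varepsilon^k(\cdot,y))$, so that $(v,\pi)$ solves \eqref{170609@eq2}. Since $B_\varepsilon(y)\subset B_{R/8}(y_0)$, on every subset of $\Omega$ disjoint from $\overline{B_{R/8}(y_0)}$ the function $\Phi_{\varepsilon,y}$ from \eqref{170704@eq1} equals $1/|\Omega|$; in particular, on $\Omega\cap\big(B_{5R/8}(y_0)\setminus\overline{B_{R/8}(y_0)}\big)$ the pair $(v,\pi)$ satisfies
\[
\operatorname{div}v=0,\qquad \cL v+\nabla\pi=\tfrac{1}{|\Omega|}e_k,\qquad \cB v+\pi\nu=0 \ \text{ on }\partial\Omega\cap B_{5R/8}(y_0),
\]
with $\big\|\tfrac{1}{|\Omega|}e_k\big\|_{L_\infty}\le m_0^{-1}$.

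For the pressure bound I would put $\mathcal A=B_{R/2}(y_0)\setminus\overline{B_{R/4}(y_0)}$ and $D=\mathcal A\cap\Omega$, and first extend $\pi|_D$ to a function $\bar\pi$ on $\mathcal A$ by setting it equal to a constant on $\mathcal A\setminus\Omega$ chosen so that $(\bar\pi)_{\mathcal A}=0$; exactly as in the derivation of \eqref{171205@C1}, the exterior measure condition \eqref{170709@eq1} gives $\|\bar\pi\|_{L_2(\mathcal A)}\le C(d,\theta)\|\pi\|_{L_2(D)}$. Since $(\bar\pi)_{\mathcal A}=0$, there is $\phi\in\mathring W^1_2(\mathcal A)^d$ with $\operatorname{div}\phi=\bar\pi$ in $\mathcal A$, $\|D\phi\|_{L_2(\mathcal A)}\le C(d)\|\bar\pi\|_{L_2(\mathcal A)}$ (the constant depends only on $d$ by scaling, the annulus having fixed aspect ratio), and $\|\phi\|_{L_{2d/(d-2)}(\mathcal A)}\le C(d)\|D\phi\|_{L_2(\mathcal A)}$ by the Sobolev inequality. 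Extending $\phi$ by zero gives $\phi\in W^1_2(\Omega)^d$ with $\operatorname{supp}\phi\subset\overline{\mathcal A}$, which is disjoint from $B_\varepsilon(y)$. Using $\phi$ as a test function in \eqref{170609@eq2}, the source term reduces to $-\dashint_\Omega\phi^k\,dx$ because $\phi$ vanishes on $\Omega_\varepsilon(y)$, while $\int_\Omega\pi\operatorname{div}\phi\,dx=\int_D\pi\bar\pi\,dx=\|\pi\|_{L_2(D)}^2$ since $\bar\pi=\pi$ on $D$. Hence
\[
\|\pi\|_{L_2(D)}^2=-\int_D A^{\alpha\beta}D_\beta v\cdot D_\alpha\phi\,dx-\dashint_\Omega\phi^k\,dx .
\]
Bounding the first term by $\lambda^{-1}\|Dv\|_{L_2(D)}\|D\phi\|_{L_2(\mathcal A)}$ and the second by $m_0^{-1}|\mathcal A|^{(d+2)/(2d)}\|\phi\|_{L_{2d/(d-2)}(\mathcal A)}$, then inserting $|\mathcal A|\le CR^d$, the bound on $\|D\phi\|_{L_2(\mathcal A)}$, dividing by $\|\pi\|_{L_2(D)}$, and using $R^{(d+2)/2}\le 1$, I obtain $\|\pi\|_{L_2(D)}\le C(d,\lambda,\theta)\,\|Dv\|_{L_2(D)}+C(d,\lambda,m_0,\theta)$.

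It remains to replace $\|Dv\|_{L_2(D)}$ by $R^{-1}\|v\|$ over the larger annular region $\Omega_{5R/8}(y_0)\setminus B_{R/8}(y_0)$. For this I would cover $D=\Omega_{R/2}(y_0)\setminus\overline{B_{R/4}(y_0)}$ by a bounded number $N=N(d)$ of balls $B_{r_0}(z_j)$ with $z_j\in D$ and $r_0$ a small fixed multiple of $R$, and for each $j$ either apply the interior Caccioppoli inequality Lemma \ref{171205@lem5}$(a)$ (when $B_{2r_0}(z_j)\subset\Omega$) or, picking $\zeta_j\in\partial\Omega$ near $z_j$, apply the boundary Caccioppoli inequality Lemma \ref{171205@lem5}$(b)$ at $\zeta_j$ at a scale comparable to $R$ (when $B_{2r_0}(z_j)$ meets $\partial\Omega$); the radii can be arranged so that all the enlarged balls lie in $B_{5R/8}(y_0)\setminus B_{R/8}(y_0)$ and avoid $B_\varepsilon(y)$, so that in each one $(v,\pi)$ solves the homogeneous conormal Stokes system with source $\tfrac{1}{|\Omega|}e_k$. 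Summing (using the bounded overlap and absorbing $R^{(d+2)/2}\|\tfrac1{|\Omega|}e_k\|_{L_\infty}\le m_0^{-1}$ into the constant) gives $\|Dv\|_{L_2(D)}\le \tfrac{C}{R}\|v\|_{L_2(\Omega_{5R/8}(y_0)\setminus B_{R/8}(y_0))}+C$ with $C=C(d,\lambda,m_0,\theta)$. Combining this with the previous estimate and summing over $k\in\{1,\dots,d\}$ proves the lemma.

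The step I expect to be the main obstacle is the construction of the test function $\phi$ near $\partial\Omega$: in contrast with the interior case of Lemma \ref{170625@lem1}, where one only controls the oscillation $\pi-(\pi)_{B_R(y)\setminus B_{R/2}(y)}$, here the full norm $\|\pi\|_{L_2(D)}$ must be recovered, which forces a norm-controlled extension of $\pi$ across $\partial\Omega$; this is exactly where the exterior measure condition \eqref{170709@eq1}, and hence the dependence of the constant on $\theta$, is used. Once this test function is available, the remainder is a routine combination of the divergence-equation trick with the interior and boundary Caccioppoli inequalities of Lemma \ref{171205@lem5}.
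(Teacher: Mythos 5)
Your proof is correct and rests on the same two pillars as the paper's: (i) recover the full $L_2$-norm of $\pi$ on $D=\Omega_{R/2}(y_0)\setminus \overline{B_{R/4}(y_0)}$ (not just its oscillation) from $\|Dv\|$ by testing the weak formulation against a compactly supported solution of a divergence equation, where the exterior measure condition \eqref{170709@eq1} is what makes the extension of $\pi$ across $\partial\Omega$ norm-controlled, and (ii) control $\|Dv\|_{L_2(D)}$ by a covering argument that combines the interior and boundary Caccioppoli inequalities of Lemma~\ref{171205@lem5}. Where you genuinely streamline is in step (i): the paper defines $\hat\pi$ on the whole ball $B_{R/2}(y_0)$ (equal to $\pi$ on $D$, a constant on $\cD=B_{R/2}(y_0)\setminus(\Omega\cup B_{R/4}(y_0))$, and zero on $B_{R/4}(y_0)$), solves the divergence equation on $B_{R/2}(y_0)$, and then multiplies by a cut-off $1-\eta$; the resulting cross-term $\int \tilde\pi\operatorname{div}(\eta\phi)\,dx$ forces a separate boundary analogue of the oscillation estimate \eqref{170619@eq1}. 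By solving the divergence equation directly on the annulus $\mathcal A=B_{R/2}(y_0)\setminus\overline{B_{R/4}(y_0)}$ with right-hand side $\bar\pi$, your test function is already supported in $\overline{\mathcal A}$, so the cut-off and the intermediate oscillation estimate disappear and $\int_\Omega\pi\operatorname{div}\phi\,dx=\|\pi\|_{L_2(D)}^2$ comes out in one stroke. Two points you should make explicit: the claim $\|\bar\pi\|_{L_2(\mathcal A)}\le C(d,\theta)\|\pi\|_{L_2(D)}$ is not literally ``as in \eqref{171205@C1}'' (there the region is a ball centered at a boundary point, to which \eqref{170709@eq1} applies directly); here, as in the paper, one takes $z_0\in\partial\Omega\cap\partial B_{3R/8}(y_0)$ so that $B_{R/8}(z_0)\subset\mathcal A$ and $|\mathcal A\setminus\Omega|\ge|B_{R/8}(z_0)\setminus\Omega|\ge\theta(R/8)^d$. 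Also, the solvability of $\operatorname{div}\phi=\bar\pi$ in $\mathring W^1_2(\mathcal A)^d$ with a constant depending only on $d$ deserves a citation (e.g.\ \cite{MR2263708}), since $\mathcal A$ is a John domain of fixed aspect ratio rather than a ball.
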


\begin{proof}
Denote $(v,\pi)=(G_\varepsilon^{\cdot k}(\cdot,y),\Pi^k_\varepsilon(\cdot,y))$ and set
$$
\tilde{\pi}=\pi I_{\Omega}-(\pi I_{\Omega})_{B_{R/2}(y_0)\setminus B_{R/4}(y_0)}.
$$
By following the same argument in the proof of  \eqref{170619@eq1}, and using the fact that $|\Omega|\ge m_0$ and $R\le 1$, we have
\begin{equation}		\label{180625@A1}
\int_{B_{R/2}(y_0)\setminus B_{R/4}(y_0)} |\tilde{\pi}|^2\,dx\le C\int_{\Omega_{R/2}(y_0)\setminus B_{R/4}(y_0)}|Dv|^2\,dx+C,
\end{equation}
where $C=C(d,\lambda, m_0)$.
Moreover, by utilizing both $(a)$ and $(b)$ in Lemma \ref{171205@lem5}, and following the same steps used in deriving \eqref{180621@eq4a}, we obtain
\begin{equation}		\label{180625@A2}
\int_{\Omega_{R/2}(y_0)\setminus B_{R/4}(y_0)}|Dv|^2\,dx\le \frac{C}{R^2} \int_{\Omega_{5R/8}(y_0)\setminus B_{R/8}(y_0)}|v|^2\,dx+C,
\end{equation}
where $C=C(d,\lambda,m_0,\theta)$.

In the rest of the proof, we shall prove that
\begin{equation}		\label{180202@eq5a}
\begin{aligned}
\int_{\Omega_{R/2}(y_0)\setminus B_{R/4}(y_0)}|\pi|^2\,dx
&\le  C\int_{\Omega_{R/2}(y_0)\setminus B_{R/4}(y_0)}|Dv|^2\,dx\\
&\quad+\int_{B_{R/2}(y_0)\setminus B_{R/4}(y_0)}|\tilde{\pi}|^2\,dx+C,
\end{aligned}
\end{equation}
where $C=C(d,\lambda,m_0, \theta)$, which together with \eqref{180625@A1} and \eqref{180625@A2} implies the desired estimate.
Set
$$
\hat{\pi}=
\left\{
\begin{aligned}
0 & \quad \text{on }\, B_{R/4}(y_0),\\
\pi  &\quad \text{on }\, \Omega_{R/2}(y_0)\setminus B_{R/4}(y_0),\\
-\frac{1}{|\cD|}\int_{\Omega_{R/2}(y_0)\setminus B_{R/4}(y_0)}\pi\,dx &\quad \text{on }\, \cD,
\end{aligned}
\right.
$$
where $\cD=B_{R/2}(y_0)\setminus (\Omega\cup B_{R/4}(y_0))$.
Note that
\begin{equation}		\label{180608@eq1}
|\cD|\ge \theta (R/8)^d.
\end{equation}
Indeed, if we take a point $z_0\in \partial \Omega \cap \partial B_{3R/8}(y_0)$, then by \eqref{170709@eq1} and the fact that $(B_{R/8}(z_0)\setminus \Omega) \subset \cD$, we have
$$
|\cD|\ge |B_{R/8}(z_0)\setminus \Omega|\ge  \theta (R/8)^d.
$$
Since
$(\hat{\pi})_{B_{R/2}(y_0)}=0$ and \eqref{180608@eq1} yields
$$
\|\hat{\pi}\|_{L_2(B_{R/2}(y_0))}\le C\|\pi\|_{L_2(\Omega_{R/2}(y_0)\setminus B_{R/4}(y_0))},
$$
by the existence of solutions to the divergence equation in a ball,
there exists a function  $\phi\in \mathring{W}^1_2(B_{R/2}(y_0))^d$ such that
$$
\operatorname{div}\phi=\hat{\pi} \quad \text{in }\, B_{R/2}(y_0),
$$
\begin{equation}		\label{170711_eq1}
\|\phi\|_{L_{2d/(d-2)}(B_{R/2}(y_0))}+\|D\phi\|_{L_2(B_{R/2}(y_0))}\le C\|{\pi}\|_{L_2(\Omega_{R/2}(y_0)\setminus B_{R/4}(y_0))},
\end{equation}
where $C=C(d, \theta)$.
We extend $\phi$ to be zero on $\bR^d \setminus B_{R/2}(y_0)$.
Let $\eta$ be an infinitely differentiable function on $\bR^d$ satisfying
$$
0\le \eta\le 1, \quad \eta\equiv 1 \, \text{ on }\, B_{R/4}(y_0), \quad \operatorname{supp}\eta\subset B_{R/2}(y_0), \quad |\nabla \eta|\le CR^{-1}.
$$
We apply $(1-\eta)\phi$ as a test function to \eqref{170609@eq2} to get
\begin{equation}		\label{180627@eq1}
\int_\Omega \pi\operatorname{div}((1-\eta)\phi)\,dx=-\int_\Omega A^{\alpha\beta}D_\beta v\cdot D_\alpha ((1-\eta)\phi)\,dx-\dashint_\Omega (1-\eta)\phi^k\,dx.
\end{equation}
Observe that
\begin{align}
\nonumber
&\int_\Omega \pi\operatorname{div}((1-\eta)\phi)\,dx\\
\nonumber
&=\int_\Omega \pi\operatorname{div}\phi\,dx-\int_\Omega \pi\operatorname{div}(\eta\phi)\,dx\\
\label{180627@eq2}
&=\int_{\Omega_{R/2}(y_0)\setminus B_{R/4}(y_0)}|\pi|^2\,dx-\int_{B_{R/2}(y_0)\setminus B_{R/4}(y_0)} \tilde{\pi}\operatorname{div}(\eta \phi)\,dx.
\end{align}
Indeed, since
$$
\operatorname{div}(\eta\phi)=\nabla \eta\cdot \phi+\eta \operatorname{div}\phi=\nabla \eta \cdot \phi+\eta \hat{\pi} \quad \text{in }\, B_{R/2}(y_0),
$$
by the definitions of $\eta$ and $\hat{\pi}$, we have
$$
\operatorname{supp} (\operatorname{div}(\eta\phi))\subset \big(B_{R/2}(y_0)\setminus B_{R/4}(y_0)\big).
$$
Thus from the fact that
$$
\int_{B_{R/2}(y_0)} \operatorname{div}(\eta \phi)\,dx=0,
$$
we get
\begin{align*}
\int_\Omega \pi\operatorname{div}(\eta\phi)\,dx&=\int_{B_{R/2}(y_0)} \pi I_{\Omega}\operatorname{div}(\eta \phi)\,dx\\
&=\int_{B_{R/2}(y_0)} \tilde{\pi} \operatorname{div}(\eta \phi)\,dx=\int_{B_{R/2}(y_0)\setminus B_{R/4}(y_0)}\tilde{\pi}\operatorname{div}(\eta\phi)\,dx.
\end{align*}
Combining \eqref{180627@eq1} and \eqref{180627@eq2}, and using H\"older's inequality and \eqref{170711_eq1}, we see that \eqref{180202@eq5a} holds.
The lemma is proved.
\end{proof}

We now prove Theorem \ref{T2}.

\begin{proof}[Proof of Theorem \ref{T2}]
We first note that by utilizing Assumptions \ref{A0}, \ref{A1}, and \ref{A2}, and following the steps used in deriving  \eqref{170607@eq8}, we have
\begin{equation}		\label{170705@eq4}
\|G_\varepsilon(\cdot,y) \|_{L_1(\Omega_R(x))}\le CR^2
\end{equation}
for all $x,y\in \Omega$ and $0<2\varepsilon<R<R_0$, where $C=C(d,\lambda,K_0, A_0,A_1)$.

To prove \eqref{180115@A1} and the boundedness of $G(x,y)$  for $|x-y| \geq R_0$, we set
$$
R = \left\{
\begin{aligned}
|x-y|/2 &\quad \text{if} \quad |x-y| < R_0,
\\
R_0/2 &\quad \text{otherwise}
\end{aligned}
\right.
$$
for $x,y\in \Omega$ with $x\neq y$.
Let $0 < \sigma < R/2$.
Since $(G_\sigma^*(\cdot,x), \Pi^*_\sigma(\cdot,x))$ satisfies (see \eqref{171130@eq2})
$$
\left\{
\begin{aligned}
\operatorname{div}G^*_\sigma(\cdot,x)=0 &\quad \text{in }\, \Omega_R(y),\\
\cL^* G^*_\sigma(\cdot,x)+\nabla \Pi^*_\sigma(\cdot,x)=\frac{1}{|\Omega|}I &\quad \text{in }\, \Omega_R(y),\\
\cB^* G^*_\sigma(\cdot,x)+\nu\Pi^*_\sigma(\cdot,x)=0 &\quad \text{on }\, \partial \Omega\cap B_R(y),
\end{aligned}
\right.
$$
by Assumptions \ref{A1} and \ref{A2}, we have
$$
\|G_\sigma^*(\cdot,x)\|_{L_\infty(\Omega_{r/2}(z))}\le C\big(r^{-d/2}\|G^*_\sigma(\cdot,x)\|_{L_2(\Omega_r(z))}+R^2\big)
$$
for any $z\in \Omega_R(y)$ and $0<r<\operatorname{dist}(z, \partial B_R(y))$, where $C=C(d,m_0,A_0,A_1)$.
Then by the same reasoning as in \eqref{180621@eq1}, we obtain
\begin{align}		
\nonumber
\|G_\sigma^*(\cdot,x)\|_{L_\infty(\Omega_{R/2}(y))}
&\le C\big(R^{-d}\|G_\sigma^*(\cdot,x)\|_{L_1(\Omega_R(y))}+R^2\big)\\
\label{180202@eq1a}
&\le CR^{2-d},
\end{align}
where we used the counterpart of \eqref{170705@eq4} and $R\le  1/2$ in the second inequality.
Therefore, we get from \eqref{180208@A1} that
$$
|G(x,y)| \leq C R^{2-d}
$$
for $x, y \in \Omega$, $x \neq y$.
That is,
\begin{equation}
							\label{180202@eq2}
|G(x,y)| \leq
\left\{
\begin{aligned}
 C |x-y|^{2-d} &\quad \text{if} \quad |x-y|< R_0,
\\
C R_0^{2-d} &\quad \text{otherwise},
\end{aligned}
\right.
\end{equation}
where $C = C(d,\lambda,m_0,K_0,A_0,A_1)$.
In particular, the above inequality for $|x-y| < R_0$ proves \eqref{180115@A1}.

In the rest of the proof, we only prove the estimates $i)$ and $ii)$ in the theorem because the others are their easy consequences; see the proofs of Lemmas \ref{171130@lem1} and \ref{170710@lem2}.
Let $y\in \Omega$ and $0<16\varepsilon<R<R_0$.
If $\operatorname{dist}(y,\partial \Omega)> R/16$,  then by Lemma \ref{170710@lem1}, we have
\begin{equation}		\label{180202@eq3}
\begin{aligned}
&\|G_\varepsilon(\cdot,y)\|_{L_{2d/(d-2)}(\Omega\setminus B_{R}(y))}+\|DG_\varepsilon(\cdot,y)\|_{L_2(\Omega\setminus B_{R}(y))}\\
&\quad +\|\Pi_\varepsilon(\cdot,y)\|_{L_2(\Omega\setminus B_{R}(y))}\le CR^{(2-d)/2},
\end{aligned}
\end{equation}
where $C=C(d,\lambda,K_0,A_0)$.
In the case when $\operatorname{dist}(y,\partial \Omega)\le  R/16$, we take $y_0\in \partial \Omega$ such that $|y-y_0|=\operatorname{dist}(y, \partial \Omega)$ and
\begin{equation}		\label{170712@eq2}
B_{R/16}(y)\subset B_{R/8}(y_0)\subset B_{5R/8}(y_0)\subset B_R(y).
\end{equation}
Then by utilizing Lemma \ref{170708@lem1} and \eqref{170705@eq4}, and following the same argument used in deriving \eqref{171129@eq4}, we have
$$
\begin{aligned}
&\|(1-\eta)G_\varepsilon(\cdot,y)\|_{L_{2d/(d-2)}(\Omega)}+
\|(1-\eta)DG_\varepsilon(\cdot,y)\|_{L_2(\Omega)}\\
&\le \frac{C}{R}\|G_\varepsilon(\cdot,y)\|_{L_2(\Omega_{5R/8}(y_0)\setminus \Omega_{R/8}(y_0))}+C\\
&\le \frac{C}{R}\|G_\varepsilon(\cdot,y)\|_{L_2(\Omega_{R}(y)\setminus \Omega_{R/16}(y))}+C,
\end{aligned}
$$
where we used \eqref{170712@eq2} in the second inequality and $C=C(d,\lambda, m_0, K_0, A_0,A_1,\theta)$.
Hence, by applying the counterpart of \eqref{180202@eq1a}, we obtain
\begin{equation}		\label{180625@eq4}
\|(1-\eta)G_\varepsilon(\cdot,y)\|_{L_{2d/(d-2)}(\Omega)}+
\|(1-\eta)DG_\varepsilon(\cdot,y)\|_{L_2(\Omega)}\le CR^{(2-d)/2}.
\end{equation}
On the other hand, to estimate $\Pi_{\varepsilon}(\cdot,y)$, we follow the argument as in \eqref{180625@eq4a} to get
$$
\begin{aligned}
\|\Pi_\varepsilon(\cdot,y)\|_{L_2(\Omega\setminus B_{R/2}(y_0))}&\le C\|DG_\varepsilon(\cdot,y)\|_{L_2(\Omega\setminus B_{R/4}(y_0))}\\
&\quad +C\|{\Pi}_\varepsilon(\cdot,y)\|_{L_{2}(\Omega_{R/2}(y_0)\setminus B_{R/4}(y_0)) }+C.
\end{aligned}
$$
Then by Lemma \ref{170708@lem1}, \eqref{170712@eq2},  and \eqref{180625@eq4}, we have
\begin{align}
\nonumber
&\|\Pi_\varepsilon(\cdot,y)\|_{L_2(\Omega\setminus B_{R}(y))}\\
\nonumber
&\le C\|DG_\varepsilon(\cdot,y)\|_{L_2(\Omega\setminus B_{R/16}(y))}+\|G_\varepsilon(\cdot,y)\|_{L_{2d/(d-2)}(\Omega\setminus B_{R/16}(y))}+C\\
\label{180625@eq5}
&\le CR^{(2-d)/2}.
\end{align}
Therefore, by combining \eqref{180202@eq3}, \eqref{180625@eq4}, and \eqref{180625@eq5},  and applying the weak lower semi-continuity, we conclude that the estimates $i)$ and $ii)$ in the theorem hold.
\end{proof}

\section{Appendix}		\label{S6}

In this appendix, we give examples when Assumptions \ref{A1} and \ref{A2} hold.
For $x=(x_1,x')\in \bR^d$ and $r>0$, we denote
$$
B'_r(x')=\{y'\in \bR^{d-1}:|x'-y'|<r\}.
$$

\begin{assumption}[$\gamma$]		\label{170219@ass2}
There exists $R_0\in (0,1]$ such that the following hold.
\begin{enumerate}[$(i)$]
\item
For $z\in \Omega$ and $0<R\le \min\{R_0,\operatorname{dist}(z,\partial \Omega)\}$, there exists a coordinate system depending on $z$ and $R$ such that in this new coordinate system, we have that
\begin{equation}		\label{170219@eq1}
\dashint_{B_R(z)}\bigg|A^{\alpha\beta}(x_1,x')-\dashint_{B'_R(z')}A^{\alpha\beta}(x_1,y')\,dy'\bigg|\,dx\le \gamma.
\end{equation}
\item
For any $z\in \partial \Omega$ and $0<R\le R_0$, there is a coordinate system depending on $z$ and $R$ such that in the new coordinate system we have that \eqref{170219@eq1} holds, and
$$
\{y:z_1+\gamma R<y_1\}\cap B_R(z)\subset \Omega_R(z)\subset \{y:z_1-\gamma R<y_1\}\cap B_R(z),
$$
where $z_1$ is the first coordinate of $z$ in the new coordinate system.
\end{enumerate}
\end{assumption}

In the theorem below, we prove that Assumption \ref{A1} holds when the coefficients $A^{\alpha\beta}$ of $\cL$ are variably partially BMO satisfying  Assumption \ref{170219@ass2} $(\gamma)$ $(i)$.

\begin{theorem}		\label{171206@thm1}
Let $d \ge 3$ and $\Omega$ be a bounded domain in $\bR^d$.
There exists a constant $\gamma=\gamma(d,\lambda)\in (0,1]$ such that, under Assumption \ref{170219@ass2} $(\gamma)$ $(i)$, the following holds:
Let $x_0\in \Omega$, $0<R<\min\{R_0, \operatorname{dist}(x_0, \partial \Omega)\}$, and $(u, p)\in W^1_2(B_R(x_0))^d\times L_2(B_R(x_0))$ satisfy \eqref{171129@eq1}.
Then we have $u\in C(\overline{B_{R/2}(x_0)})^d$ with the estimate
$$
\|u\|_{L_\infty(B_{R/2}(x_0))}\le A_0 \big(R^{-d/2}\|u\|_{L_2(B_R(x_0))}+R^2\|f\|_{L_\infty(B_R(x_0))}\big),
$$
where $A_0=A_0(d,\lambda)$.
\end{theorem}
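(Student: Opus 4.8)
## Proof proposal

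The plan is to follow the classical Campanato–Morrey / $L_p$-oscillation scheme. The heart of the matter is to obtain a Hölder estimate for $u$, since by Assumption~\ref{A1}-type bounds (which we are precisely establishing here) continuity and the stated $L_\infty$ bound follow by scaling from an interior oscillation decay estimate. First I would reduce to the case $R=1$ and $x_0=0$ by the usual dilation $u(x)\mapsto u(Rx+x_0)$, $p(x)\mapsto Rp(Rx+x_0)$, $f(x)\mapsto R^2 f(Rx+x_0)$, which preserves the divergence-free condition and the equation while scaling the estimate correctly; this is routine. The remaining task is then a fixed-scale interior estimate on $B_1$.

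Next I would invoke the $L_q$-solvability and regularity theory for the Stokes system with variably partially BMO coefficients — this is exactly what \cite{MR3809039} provides, and it is cited in the introduction as the input. The key point is that, for $\gamma=\gamma(d,\lambda)$ small enough, weak solutions of \eqref{171129@eq1} satisfy interior $W^1_q$ estimates for every $q<\infty$: specifically, on a slightly smaller ball $B_{3/4}$ one has
\begin{equation*}
\|Du\|_{L_q(B_{3/4})}+\|p-(p)_{B_{3/4}}\|_{L_q(B_{3/4})}\le C_q\big(\|u\|_{L_2(B_1)}+\|f\|_{L_q(B_1)}\big).
\end{equation*}
Since $f\in L_\infty(B_1)\subset L_q(B_1)$ for all $q$, taking $q>d$ and applying the Sobolev embedding $W^1_q(B_{3/4})\hookrightarrow C^{0,1-d/q}(\overline{B_{5/8}})$ gives $u\in C(\overline{B_{5/8}})$ together with the pointwise bound $\|u\|_{L_\infty(B_{1/2})}\le C(d,\lambda)\big(\|u\|_{L_2(B_1)}+\|f\|_{L_\infty(B_1)}\big)$. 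Undoing the scaling produces the claimed inequality with $A_0=A_0(d,\lambda)$. One subtlety to handle carefully: the interior $L_q$-estimate from \cite{MR3809039} is usually stated for solutions on the whole domain or with appropriate boundary conditions, so I would localize by a cutoff $\zeta$ supported in $B_1$ with $\zeta\equiv 1$ on $B_{3/4}$, noting that $(u\zeta,p\zeta)$ solves a Stokes system with right-hand side $f\zeta+(\text{lower-order terms in }u,Du,p)$ and $\operatorname{div}(u\zeta)=\nabla\zeta\cdot u$, then absorb the new divergence datum via Lemma~\ref{170602@lem1} and bootstrap the integrability of $Du$ and $p$ from $L_2$ up to $L_q$ in finitely many steps (each step gains integrability through Sobolev embedding on the commutator terms).

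The main obstacle I expect is precisely this bootstrapping of the pressure: controlling $p$ in $L_q$ near the localization requires the solvability of the divergence equation with $L_q$ data together with the $L_q$ Stokes estimate, and one must check that the smallness threshold $\gamma(d,\lambda)$ from the $L_2\to L_q$ iteration can be chosen uniformly over the finitely many integrability steps needed to reach $q>d$ — but since only finitely many steps are involved and each uses the same class of estimates from \cite{MR3809039}, this is manageable. Everything else (scaling, cutoff bookkeeping, Sobolev embedding, and the fact that the proof transfers verbatim to $\cL^*$ by symmetry of the hypotheses) is routine. Finally, I would remark that the same argument, combined with the boundary $L_q$-estimates and the Reifenberg flatness condition in Assumption~\ref{170219@ass2}$(\gamma)$$(ii)$, yields Assumption~\ref{A2}, which is what the Appendix is meant to record.
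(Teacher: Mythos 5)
Your high-level plan --- localize with a cutoff, invoke the $L_q$-estimate of \cite{MR3809039} for the conormal derivative problem, bootstrap finitely many times from $L_2$ to $L_q$ with $q>d$, and finish with Morrey's embedding --- is exactly the structure of the paper's proof. However, there is one genuine gap that the paper's proof addresses and your sketch passes over, and it is the technical heart of the argument.

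After you multiply by the cutoff and view $(u\zeta, p\zeta)$ (the paper uses $\eta(u-(u)_{B_r})$, but that is a minor variant) as a solution of a conormal derivative problem on the localization ball $B_r$, you propose to apply \cite[Theorem~2.2]{MR3809039}. That theorem's hypotheses are conditions \emph{on the coefficients and the domain}, not on the solution: they require the variably partially BMO smallness \eqref{170219@eq1} to hold at interior points \emph{and} at boundary points of the domain on which the conormal problem is posed --- here, $\partial B_r$. Assumption~\ref{170219@ass2} $(\gamma)$ $(i)$ gives you only the interior smallness for $A^{\alpha\beta}$; it says nothing near $\partial B_r$, because $\partial B_r$ is an arbitrary interior sphere, not $\partial\Omega$. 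The fact that the cutoff data vanish near $\partial B_r$ does not remove this requirement, since the theorem's hypotheses are on $\tilde A^{\alpha\beta}$ themselves. The paper's fix is to replace the coefficients inside the localization ball by an interpolation $\tilde A^{\alpha\beta}=\zeta A^{\alpha\beta}+(1-\zeta)I$ between the given coefficients and the Laplacian, so that $\tilde A^{\alpha\beta}\equiv I$ near $\partial B_r$. This makes the boundary BMO and Reifenberg flatness conditions of \cite{MR3809039} trivially verifiable on $\partial B_r$ while keeping the interior condition intact (at the cost of an additive term $C_0R/(r-\rho)$ in the BMO modulus, which is why the applicable scale $R_1$ depends on $r-\rho$ and the number of bootstrap rings must be tracked carefully). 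Without this modification your invocation of \cite[Theorem~2.2]{MR3809039} is not justified.

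A secondary point: you assert that ``weak solutions of \eqref{171129@eq1} satisfy interior $W^1_q$ estimates for every $q<\infty$'' as though it were a direct input; this is precisely the statement being proved, and it is not an immediate corollary of \cite{MR3809039}, which gives estimates for the global conormal problem rather than ready-made interior estimates. The remaining ingredients you list (divergence correction via Lemma~\ref{170602@lem1}, finite bootstrap, Morrey, covering, and the observation that the argument transfers to $\cL^*$) are all present in the paper and your treatment of them is fine. I would also note that the paper does not normalize $R=1$ at the outset but instead rescales only at the step where the $L_q$ estimate is applied, which makes the dependence of the constant on $(r-\rho)/r$ explicit; your variant is workable but you will need to check that the smallness scale $R_1$ and the threshold $\gamma$ can be fixed uniformly over the finitely many bootstrap steps.
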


\begin{proof}
The proof is based on a bootstrap argument and the $L_q$-estimate of the conormal derivative problem for the Stokes system.
Without loss of generality, we assume that $x_0 = 0$ and use the abbreviation $B_r=B_r(0)$.
Assume that
$(u, p)\in W^1_2(B_r)^d\times L_2(B_r)$ satisfies
$$
\left\{
\begin{aligned}
\operatorname{div} u=0 \quad \text{in }\, B_r,\\
\cL u+\nabla p=f \quad \text{in }\, B_r,
\end{aligned}
\right.
$$
where $0<r<\min\{R_0, \operatorname{dist}(0, \partial \Omega)\}$, and $f\in L_\infty(B_r)^d$.
For $\rho\in (0,r)$, let $\eta$ and $\zeta$ be infinitely differentiable functions on $\bR^d$ satisfying
$$
0\le \eta\le 1, \quad \eta\equiv 1 \, \text{ on }\, B_{\rho}, \quad \operatorname{supp}\eta\subset B_{\rho_1}, \quad |\nabla \eta|\le 4\sqrt{d}(r-\rho)^{-1},
$$
$$
0\le \zeta\le 1, \quad \zeta\equiv 1 \, \text{ on }\, B_{\rho_1}, \quad \operatorname{supp}\zeta\subset B_{\rho_2}, \quad |\nabla \zeta|\le 8\sqrt{d}(r-\rho)^{-1},
$$
where $\rho_1=\frac{r+\rho}{2}$ and $\rho_2=\frac{\rho_1+r}{2}$.
Set $\tilde{\cL}$ to be an operator of the form
$$
\tilde{\cL} u=D_\alpha(\tilde{A}^{\alpha\beta}D_\beta u),
$$
where $\tilde{A}^{\alpha\beta}_{ij}=\zeta A^{\alpha\beta}_{ij}+(1-\zeta)\delta_{\alpha\beta}\delta_{ij}$.
Here $\delta_{ij}$ is the Kronecker delta symbol.
Then $\tilde{A}^{\alpha\beta}$ satisfy the strong ellipticity condition \eqref{171209@eq1}.
Note that
$$
\tilde{A}^{\alpha\beta}_{ij}(x_1,x')-\tilde{A}^{\alpha\beta}_{ij}(x_1,y')
$$
$$
= \zeta(x_1,x') \left(A_{ij}^{\alpha\beta}(x_1,x') - A_{ij}^{\alpha\beta}(x_1,y')\right) + A_{ij}^{\alpha\beta}(x_1,y') \left(\zeta(x_1,x') - \zeta(x_1,y') \right)
$$
$$
+ \left(\zeta(x_1,y') - \zeta(x_1,x')\right) \delta_{\alpha\beta} \delta_{ij}.
$$
Under Assumption \ref{170219@ass2} ($\gamma$) (i), this implies that
\begin{equation}
							\label{eq0328_04}
\dashint_{B_R(z)}\bigg|\tilde{A}^{\alpha\beta}(x_1,x')-\dashint_{B'_R(z')}\tilde{A}^{\alpha\beta}(x_1,y')\,dy'\bigg|\,dx\le \gamma + \frac{C_0R}{r-\rho}
\end{equation}
for all $z \in \Omega$ and $0 < R \leq \min\{R_0, \operatorname{dist}(z,\partial\Omega)\}$, where $C_0=C_0(d,\lambda)\ge 1$.

Set $(v,\pi):=\big(\eta (u-(u)_{B_r}),\eta p\big)$, which satisfies
$$
\left\{
\begin{aligned}
\operatorname{div} v= G &\quad \text{in }\, B_r,\\
\tilde{\cL} v+\nabla \pi=F+D_\alpha F_\alpha &\quad \text{in }\, B_r,\\
\tilde{\cB} v+\pi \nu=F_\alpha \nu_\alpha &\quad \text{on }\, \partial B_r,
\end{aligned}
\right.
$$
where
$$
G = \nabla \eta\cdot (u-(u)_{B_r}),
$$
$$
F=\eta f+A^{\alpha\beta}D_\beta u D_\alpha \eta+p \nabla \eta, \quad F_\alpha=A^{\alpha\beta}D_\beta \eta (u-(u)_{B_r}).
$$
We now obtain an $L_q$-estimate for this system.
To see clearly the parameters on which the constant in the estimate depends, let us rescale as follows.
Set
$$
\bar{v}(x) = r^{-2}v(rx), \quad \bar{\pi}(x) = r^{-1} \pi(rx),
$$
$$
\bar{G}(x) = r^{-1}G(rx), \quad \bar{F}(x) = F(rx), \quad \bar{F}_\alpha(x) = r^{-1}F_\alpha(rx),
$$
$$
\bar{A}^{\alpha\beta}(x) = \tilde{A}^{\alpha\beta}(rx).
$$
Then $(\bar{v}, \bar{\pi})$ satisfies
\begin{equation}		\label{eq0328_02}
\left\{
\begin{aligned}
\operatorname{div} \bar{v}= \bar{G} &\quad \text{in }\, B_1,\\
\bar{\cL} \bar{v}+\nabla \bar{\pi}=\bar{F}+D_\alpha \bar{F}_\alpha &\quad \text{in }\, B_1,\\
\bar{\cB} \bar{v}+\bar{\pi} \nu=\bar{F}_\alpha \nu_\alpha &\quad \text{on }\, \partial B_1,
\end{aligned}
\right.
\end{equation}
where
$$
\bar{\cL} u = D_\alpha (\bar{A}^{\alpha\beta}D_\beta u).
$$

For given $q>1$, let $\gamma_q=\gamma_q(d,\lambda,q)\in (0,1/48]$ be the constant from \cite[Theorem 2.2]{MR3809039}.
We take $\gamma \in (0,\gamma_q/2]$ and set
$$
R_1=\frac{\gamma_q}{2C_0}\frac{r-\rho}{r},
$$
where $C_0=C_0(d,\lambda)\ge 1$ is the constant in \eqref{eq0328_04}.
Then, under Assumption \ref{170219@ass2} $(\gamma)$ $(i)$, we have the following.
\begin{enumerate}[$(a)$]
\item
For any $z\in B_1$ and $0<\tau \le \min\{R_1, \operatorname{dist}(z,\partial B_1)\}$,
there exists a coordinate system depending on $z$ and $\tau$ such that in this new coordinate system, we have that
\begin{equation}		\label{eq0328_03}
\dashint_{B_\tau (z)}\bigg|\bar{A}^{\alpha\beta}(x_1,x')-\dashint_{B'_\tau(z')} \bar{A}^{\alpha\beta}(x_1,y')\,dy'\bigg|\,dx\le \gamma_q.
\end{equation}
\item
For any $z\in \partial B_1$ and $0< \tau \le R_1$, there is a coordinate system depending on $z$ and $\tau$ such that in the new coordinate system we have that \eqref{eq0328_03} holds, and
$$
\{y:z_1+\gamma_q \tau<y_1\}\cap B_\tau (z) \subset B_1 \cap B_\tau (z)\subset \{y:z_1-\gamma_q\tau <y_1\}\cap B_\tau (z).
$$
\end{enumerate}
Indeed, the statement (a) follows from \eqref{eq0328_04} with scaling, and the statement (b) is due to the facts that $\bar{A}^{\alpha\beta}$ is the Laplace operator near the boundary of $B_1$ and the flatness of $\partial B_1$ can be controlled by the radius of the ball $B_\tau(z)$.

Since $(\bar{v}-(\bar{v})_{B_1}, \bar{\pi})$ satisfies the same system \eqref{eq0328_02}, by \cite[Theorem 2.2]{MR3809039} applied to \eqref{eq0328_02}, we have
\begin{equation}
							\label{eq0328_05}
\|D\bar{v}\|_{L_q(B_1)} + \| \bar{\pi}\|_{L_q(B_1)} \leq C \big(\|\bar{F} \|_{L_{qd/(q+d)}(B_1)} + \|\bar{F}_\alpha\|_{L_q(B_1)}+\|\bar{G}\|_{L_q(B_1)}\big),
\end{equation}
where
$$
C = C(d,\lambda,q,R_1,\operatorname{diam}B_1) = C(d,\lambda,q,(r-\rho)/r).
$$
Thus, from \eqref{eq0328_05} with scaling and the following Poincar\'e inequality
$$
\|u-(u)_{B_r}\|_{L_q(B_r)}\le Cr\|Du\|_{L_{qd/(q+d)}(B_r)},
$$
we get (using $r\le 1$)
\begin{align}
\nonumber
&\|Du\|_{L_{q}(B_{\rho})}+\|p\|_{L_{q}(B_{\rho})}\\
\nonumber
&\le \|Dv\|_{L_q(B_r)}+\|\pi\|_{L_q(B_r)}\\
\label{171213@eq2}
&\le \frac{C}{r-\rho}\big(\|Du\|_{L_{qd/(q+d)}(B_r)}+\|p\|_{L_{qd/(q+d)}(B_r)}\big)+C\|f\|_{L_{qd/(q+d)}(B_r)},
\end{align}
where $C=C(d,\lambda,q, (r-\rho)/r)$.

We are ready to complete the proof of the theorem.
Let $x_0\in \Omega$, $0<R<\min\{R_0, \operatorname{dist}(x_0, \partial \Omega)\}$, and $(u,p)\in W^1_2(B_R(x_0))^d\times L_2(B_R(x_0))$ satisfy \eqref{171129@eq1}.
We fix $q>d$ and set
$$
q_i=\frac{dq}{d+qi}, \quad r_i=\frac{R}{4}\left(1+\frac{i}{N}\right), \quad i\in \{0, \ldots, N\},
$$
where $N$ is the smallest integer such that $q_N\le 2$.
We take $\gamma=\gamma(d,\lambda,q)\in (0, \gamma_q/2]$ satisfying the above properties $(a)$ and $(b)$, where $\gamma_q=\min\{\gamma_{q_0}, \ldots, \gamma_{q_N}\}$ and $\gamma_{q_i}=\gamma_{q_i}(d,\lambda,q_i)$ are the constants from \cite[Theorem 2.2]{MR3809039}.
Note that $(u, p-(p)_{B_{R/2}(x_0)})$ satisfies the same system \eqref{171129@eq1}.
Under Assumption \ref{170219@ass2} $(\gamma)$ $(i)$,
by applying \eqref{171213@eq2} iteratively, we get
$$
\begin{aligned}
&\|Du\|_{L_{q}(B_{R/4}(x_0))}+\|p-(p)_{B_{R/2}(x_0)}\|_{L_{q}(B_{R/4}(x_0))}\\
&\le \left(\frac{C}{R}\right)^N \big(\|Du\|_{L_{q_{N}}(B_{R/2}(x_0))}+\|p-(p)_{B_{R/2}(x_0)}\|_{L_{q_{N}}(B_{R/2}(x_0))}\big)\\
&\quad +C\sum_{i=0}^{N-1}\left(\frac{C}{R}\right)^{i}\|f\|_{L_{q_{i+1}}(B_{r_{i+1}}(x_0))},
\end{aligned}
$$
where $C=C(d,\lambda, q)$.
Therefore, by H\"older's inequality, we have
$$
\begin{aligned}
\|Du\|_{L_{q}(B_{R/4}(x_0))}&\le CR^{d/q-d/2} \big(\|Du\|_{L_{2}(B_{R/2}(x_0))}+\|p-(p)_{B_{R/2}(x_0)}\|_{L_{2}(B_{R/2}(x_0))}\big)\\
&\quad +CR^{1+d/q}\|f\|_{L_{\infty}(B_R(x_0))}\\
&\le CR^{d/q-d/2-1}\|u\|_{L_{2}(B_R(x_0))}+CR^{1+d/q}\|f\|_{L_{\infty}(B_R(x_0))},
\end{aligned}
$$
where we used Caccioppoli's inequality (see Lemma \ref{171205@lem5}) in the second inequality.
Since $q>d$, by using Morrey's inequality, we have that $u\in C^{1-d/q}(\overline{B_{R/4}(x_0)})^d$ and
$$
\|u\|_{L_\infty(B_{R/4}(x_0))}\le CR^{-d/2}\|u\|_{L_2(B_R(x_0))}+R^2\|f\|_{L_\infty(B_R(x_0))},
$$
where $C=C(d,\lambda,q)$.
Finally, using a covering argument, we easily see that the desired estimate holds.
The theorem is proved.
\end{proof}

In the theorem below, we prove that Assumption \ref{A2} holds when the coefficients of $\cL$ are variably partially BMO and the domain is Reifenberg flat.

\begin{theorem}
Let $d \ge 3$ and $\Omega$ be a bounded domain in $\bR^d$ satisfying $\operatorname{diam}\Omega\le M_0$.
There exists a constant $\gamma=\gamma(d,\lambda)\in (0,1]$ such that, under Assumption \ref{170219@ass2}, the following holds:
Let $x_0\in \partial\Omega$, $0<R<R_0$, and $(u, p)\in W^1_2(\Omega_R(x_0))^d\times L_2(\Omega_R(x_0))$ satisfy \eqref{171129@eq1a}.
Then we have $u\in C(\overline{\Omega_{R/2}(x_0)})^d$ with the estimate
\begin{equation}		\label{171208@A2}
\|u\|_{L_\infty(\Omega_{R/2}(x_0))}\le A_1 \big(R^{-d/2}\|u\|_{L_2(\Omega_R(x_0))}+R^2\|f\|_{L_\infty(\Omega_R(x_0))}\big),
\end{equation}
where $A_1=A_1(d,\lambda, M_0,R_0)$.
\end{theorem}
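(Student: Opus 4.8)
The plan is to mirror the interior argument of Theorem~\ref{171206@thm1}, but now anchored at a boundary point, using the boundary $L_q$-estimate for the conormal derivative problem together with the Reifenberg flatness to control the geometry. First, by translating and rotating I would assume $x_0 = 0$ and work with $\Omega_r = \Omega_r(0)$ for $0 < r < R_0$. The key localization device is again a cutoff: for $\rho \in (0,r)$, choose smooth $\eta, \zeta$ with $\eta \equiv 1$ on $\Omega_\rho$, $\operatorname{supp}\eta \subset B_{\rho_1}$, $\zeta \equiv 1$ on a neighborhood of $\operatorname{supp}\eta$, $\operatorname{supp}\zeta \subset B_{\rho_2}$, and gradient bounds of order $(r-\rho)^{-1}$, where $\rho_1, \rho_2$ interpolate between $\rho$ and $r$ as in Theorem~\ref{171206@thm1}. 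Set $\tilde{A}^{\alpha\beta}_{ij} = \zeta A^{\alpha\beta}_{ij} + (1-\zeta)\delta_{\alpha\beta}\delta_{ij}$ so that near $\partial B_r$ the operator is the Laplacian; as in \eqref{eq0328_04}, the variably partially BMO bound for $\tilde{A}^{\alpha\beta}$ degrades only by an additive $C_0 R/(r-\rho)$ term, which after choosing $R_1$ small can be absorbed below the threshold $\gamma_q$ of \cite[Theorem 2.2]{MR3809039}.

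Next I would localize the solution itself: set $(v,\pi) = (\eta(u - (u)_{\Omega_r}), \eta p)$, which solves a conormal derivative problem on $\Omega_r$ of the form
\begin{equation*}
\left\{
\begin{aligned}
\operatorname{div} v &= G &&\text{in } \Omega_r,\\
\tilde{\cL} v + \nabla \pi &= F + D_\alpha F_\alpha &&\text{in } \Omega_r,\\
\tilde{\cB} v + \pi\nu &= F_\alpha\nu_\alpha &&\text{on } \partial\Omega \cap B_r,
\end{aligned}
\right.
\end{equation*}
with $G = \nabla\eta \cdot (u - (u)_{\Omega_r})$, $F = \eta f + A^{\alpha\beta}D_\beta u\, D_\alpha\eta + p\nabla\eta$, and $F_\alpha = A^{\alpha\beta}D_\beta\eta\,(u - (u)_{\Omega_r})$; the boundary data on $\partial B_r \cap \Omega$ vanishes because $\eta$ is supported in $B_{\rho_1}$. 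After the same rescaling $x \mapsto rx$ used in Theorem~\ref{171206@thm1}, the rescaled coefficients satisfy the smallness condition \eqref{eq0328_03} in the interior and, near $\partial B_1$, the Reifenberg flatness of $\Omega$ from Assumption~\ref{170219@ass2} $(ii)$ combined with the flatness of $\partial B_1$ gives the required half-space approximation with constant $\gamma_q\tau$. Then \cite[Theorem 2.2]{MR3809039} yields the analog of \eqref{eq0328_05}, and undoing the scaling together with the Poincar\'e inequality $\|u - (u)_{\Omega_r}\|_{L_q(\Omega_r)} \le Cr\|Du\|_{L_{qd/(q+d)}(\Omega_r)}$ (valid on $\Omega_r$ since it is a Reifenberg flat domain, hence an extension domain) produces
\begin{equation*}
\|Du\|_{L_q(\Omega_\rho)} + \|p\|_{L_q(\Omega_\rho)} \le \frac{C}{r-\rho}\big(\|Du\|_{L_{qd/(q+d)}(\Omega_r)} + \|p\|_{L_{qd/(q+d)}(\Omega_r)}\big) + C\|f\|_{L_{qd/(q+d)}(\Omega_r)}.
\end{equation*}

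With this boundary reverse-H\"older-type inequality in hand, I would run the bootstrap exactly as in the interior case: fix $q > d$, set $q_i = dq/(d+qi)$ and radii $r_i = \frac{R}{4}(1 + i/N)$ with $N$ the least integer making $q_N \le 2$, choose $\gamma$ below half the minimum of the finitely many thresholds $\gamma_{q_i}$, and iterate the displayed estimate from $q_N$ down through $q_0 = q$. Replacing $p$ by $p - (p)_{\Omega_{R/2}}$ and using the boundary Caccioppoli inequality, Lemma~\ref{171205@lem5}$(b)$ (whose hypothesis $|B_r(y_0)\setminus\Omega| \ge \theta r^d$ is supplied by Reifenberg flatness), turns the $L_2$ norms of $Du$ and $p$ into $\|u\|_{L_2(\Omega_R)}$, so that
\begin{equation*}
\|Du\|_{L_q(\Omega_{R/4})} \le CR^{d/q - d/2 - 1}\|u\|_{L_2(\Omega_R)} + CR^{1+d/q}\|f\|_{L_\infty(\Omega_R)}.
\end{equation*}
Since $q > d$, Morrey's inequality applies on the extension domain $\Omega_{R/4}$ and gives $u \in C^{1-d/q}(\overline{\Omega_{R/4}(x_0)})^d$ together with the estimate \eqref{171208@A2} on $\Omega_{R/4}(x_0)$; a covering argument upgrades $R/4$ to $R/2$. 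The main obstacle I expect is bookkeeping the geometric hypotheses: one must verify that the Reifenberg flatness constant controls \emph{both} the half-space approximation needed for \cite[Theorem 2.2]{MR3809039} near $\partial B_1$ after rescaling \emph{and} the exterior measure density $\theta$ needed for the boundary Caccioppoli and Poincar\'e inequalities, while tracking how the cutoff-induced $C_0R/(r-\rho)$ perturbation of the coefficients interacts with the finitely many smallness thresholds in the bootstrap. The constant $A_1$ then depends on $d,\lambda$ and, through the number $N$ of bootstrap steps and the Reifenberg constant, on $M_0$ and $R_0$.
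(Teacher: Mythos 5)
Your proposal parallels the interior case, but it diverges from the paper's proof in a way that introduces a genuine gap. You form the modified coefficients $\tilde A^{\alpha\beta}=\zeta A^{\alpha\beta}+(1-\zeta)\delta_{\alpha\beta}\delta_{ij}$, view $(v,\pi)=(\eta(u-(u)_{\Omega_r}),\eta p)$ as solving a conormal derivative problem \emph{on $\Omega_r=\Omega\cap B_r$}, rescale, and then apply the $L_q$-estimate of \cite[Theorem 2.2]{MR3809039} on the rescaled $\Omega_r$. That application requires the domain $\Omega_r$ to satisfy the Reifenberg flatness hypothesis of Assumption \ref{170219@ass2}~$(ii)$, and your later steps also assert that $\Omega_r$ is "Reifenberg flat, hence an extension domain" (for the Poincar\'e inequality) and that $\Omega_{R/4}$ is "the extension domain" on which Morrey's inequality applies. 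None of this is justified: near the set $\partial\Omega\cap\partial B_r$, where the (approximate) hyperplane tangent to $\partial\Omega$ meets $\partial B_r$ roughly orthogonally, $\Omega_r$ has a wedge-like corner with opening angle close to $\pi/2$. Such a domain is not Reifenberg flat and need not be a Sobolev extension domain; the paper explicitly flags this ("we have a ball intersected with a Reifenberg flat domain, which may not share the same nice properties as the domain $\Omega$"). So the $L_q$-estimate, the Poincar\'e inequality on $\Omega_r$, and Morrey's inequality on $\Omega_{R/4}$ cannot be invoked as you describe.

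The paper's route avoids the corner entirely and is simpler than yours. Since $\operatorname{supp}\eta\subset B_r\subset B_{R_0}$, the pair $(v,\pi)=(\eta(u-(u)_{\Omega_{R/4}}),\eta p)$ solves a conormal derivative problem on the \emph{whole} domain $\Omega$ with boundary condition on $\partial\Omega$ (see \eqref{171206@EQ1}); one then applies \cite[Theorem 2.2]{MR3809039} directly on $\Omega$, which \emph{is} Reifenberg flat by Assumption \ref{170219@ass2}. In particular no $\zeta$ or $\tilde A$ modification is needed — that device was only required in the interior Theorem \ref{171206@thm1} because the ball $B_r$ has an artificial boundary there. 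After the bootstrap yields the $L_q$ gradient bound, the $L_\infty$ estimate is obtained not by Morrey's inequality on $\Omega_{R/4}$ but by a pointwise Riesz-potential bound $|u(x)-\bar u|\le C\int_{\Omega_{R/8}}|Du(y)|\,|x-y|^{1-d}\,dy$ from the proof of \cite[Theorem 3.5]{MR3809039}, which holds on the nice domain $\Omega$ even though $\Omega_{R/8}$ itself is not. To salvage your write-up you should (i) replace the localization on $\Omega_r$ by localization on $\Omega$ (and drop the $\tilde A$ construction), (ii) use the boundary Poincar\'e inequality from \cite[Theorem 3.5]{MR3809039} rather than one on the non-extension domain $\Omega_r$, and (iii) replace the Morrey step by the Riesz-potential pointwise estimate, after which the covering argument using the interior Theorem \ref{171206@thm1} does close the proof.
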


\begin{proof}
The proof of the theorem is similar to that of Theorem \ref{171206@thm1}.
Let $(u, p)\in W^1_2(\Omega_R(x_0))^d\times L_2(\Omega_R(x_0))$ satisfy \eqref{171129@eq1a}, where  $x_0\in \partial\Omega$ and $0<R<R_0$.
Without loss of generality, we assume that $x_0=0$.
We denote $B_\tau=B_\tau(x_0)$ and $\Omega_\tau=\Omega_\tau(x_0)$.
For $0<\rho<r\le R$, let $\eta$ be an infinitely differentiable function on $\bR^d$ satisfying
$$
0\le \eta\le 1, \quad \eta\equiv 1 \, \text{ on }\, B_{\rho}, \quad \operatorname{supp}\eta\subset B_{r}, \quad |\nabla \eta|\le C(d)(r-\rho)^{-1}.
$$
Then  $(v,\pi):=\big(\eta (u-(u)_{\Omega_{R/4})},\eta p\big)$ satisfies
\begin{equation}		\label{171206@EQ1}
\left\{
\begin{aligned}
\operatorname{div}v=\nabla \eta \cdot (u-(u)_{\Omega_{R/4}})\quad &\text{in }\ \Omega,\\
\cL v+\nabla \pi= F +D_\alpha F_\alpha \quad &\text{in }\ \Omega,\\
\cB v+\pi\nu =F_\alpha \nu_\alpha \quad &\text{on }\ \partial \Omega,
\end{aligned}
\right.
\end{equation}
where
$$
F=\eta f+A^{\alpha\beta}D_\beta u  D_\alpha \eta + p \nabla \eta, \quad F_\alpha=A^{\alpha\beta}D_\beta \eta (u-(u)_{\Omega_{R/4}}).
$$
Note that $(v-(v)_\Omega, \pi)$ satisfies the same problem \eqref{171206@EQ1}.
Therefore, for given $q>1$, under Assumption \ref{170219@ass2} $(\gamma)$, where $\gamma=\gamma(d,\lambda,q)$ is the constant from \cite[Theorem 2.2]{MR3809039}, we have that
\begin{equation}		\label{171213@eq3}
\begin{aligned}
&\|Du\|_{L_{q}(\Omega_\rho)}+\|p\|_{L_{q}(\Omega_\rho)}\\
&\le \|Dv\|_{L_q(\Omega)}+\|\pi\|_{L_q(\Omega)}\\
&\le \frac{C}{r-\rho}\big(\|Du\|_{L_{qd/(q+d)}(\Omega_{r})}+\|p\|_{L_{qd/(q+d)}(\Omega_{r})}\big)\\
&\quad +\frac{C}{r-\rho}\| (u-(u)_{\Omega_{R/4}})\|_{L_{q}(\Omega_{r})}+C\|f\|_{L_{qd/(q+d)}(\Omega_{r_{i+1}})},
\end{aligned}
\end{equation}
where $C=C(d,\lambda,q,R_0,M_0)$.

Now we fix $q>d$, and set
$$
q_i=\frac{dq}{d+qi}, \quad r_i=\frac{R}{8}\left(1+\frac{i}{N}\right), \quad i\in \{0,1,\ldots,N\},
$$
where $N$ is the smallest integer such that $q_{N}\le 2$.
Set
$$
\gamma=\min\{\gamma_0,\ldots,\gamma_{N}\}\in (0,1/48],
$$
where $\gamma_i=\gamma_i(d,\lambda,q_i)$ are constants from
\cite[Theorem 2.2]{MR3809039}.
Then under Assumption \ref{170219@ass2} $(\gamma)$, we get from \eqref{171213@eq3} that
$$
\begin{aligned}
&\|Du\|_{L_{q}(\Omega_{R/8})}+\|p\|_{L_{q}(\Omega_{R/8})}\\
&\le \left(\frac{C}{R}\right)^{N}\big(\|Du\|_{L_{q_{N}}(\Omega_{R/4})}+\|p\|_{L_{q_{N}}(\Omega_{R/4})}\big)\\
&\quad +\sum_{i=0}^{N-1}\left(\frac{C}{R}\right)^{i+1}\|u-(u)_{\Omega_{R/4}}\|_{L_{q_i}(\Omega_{r_{i+1}})}+C\sum_{i=0}^{N-1} \left(\frac{C }{R}\right)^i\|f\|_{L_{q_{i+1}}(\Omega_{r_{i+1}})}.
\end{aligned}
$$
Therefore, by H\"older's inequality, we obtain that
\begin{align}
\nonumber
\|Du\|_{L_{q}(\Omega_{R/8})}
&\le CR^{d/q-d/2}\big(\|D u\|_{L_{2}(\Omega_{R/4})}+\|p\|_{L_{2}(\Omega_{R/4})}\big)\\
\nonumber
&\quad +\frac{C}{R}\|u-(u)_{\Omega_{R/4}}\|_{L_{q}(\Omega_{R/4})}+CR^{1+d/q}\|f\|_{L_{\infty}(\Omega_{R})}\\
\label{171208@A1}
&\le CR^{d/q-d/2-1}\|u\|_{L_{2}(\Omega_{R})}+CR^{1+d/q}\|f\|_{L_{\infty}(\Omega_{R})},
\end{align}
where we used the Poincar\'e inequality (see \cite[Theorem 3.5]{MR3809039}) and Caccioppoli's inequality (see Lemma \ref{171205@lem5} $(b)$) in the second inequality.
Since $q>d$, the inequality \eqref{171208@A1} implies that $u$ is H\"older continuous, and thus it is bounded in $\Omega_r$ for any $r<R/8$.

We finally show \eqref{171208@A2}, the proof of which is more involved than the corresponding estimate in Theorem \ref{171206@thm1}.
For a proof, one may consider an extension of $u$ using the fact that Reifenberg domains are extension domains.
However, we have a ball intersected with a Reifenberg flat domain, which may not share the same nice properties as the domain $\Omega$ (as an extension domain).
To deal with such an intersection  and obtain the precise information about the parameters on which the constant $A_1$ depends, we proceed as follows. Let
$$
z_0=(R/32,0,\ldots,0)
$$
in the coordinate system associated with the point $x_0=0$ and $R/4$ satisfying
$$
\{y:\gamma R/4<y_1\} \cap B_{R/4} \subset \Omega_{R/4} \subset \{y:-\gamma R/4<y_1\}\cap B_{R/4}.
$$
Then by the proof of \cite[Theorem 3.5]{MR3809039} (see the inequality below \cite[Eq. (57)]{MR3809039}), we  see that
$$
|u(x)-\bar{u}|\le C(d)\int_{\Omega_{R/8}} \frac{|D u(y)|}{|x-y|^{d-1}}\,dy
$$
for any $x\in \Omega_{R/16}$.
Here, we set
$$
\bar{u}=\frac{1}{\|\phi\|_{L_1(\bR^d)}}\int_{\bR^d} u(z)\phi(z-z_0)\,dz,
$$
where $\phi$ is a smooth function in $\bR^d$ satisfying
$$
0\le \phi\le 1, \quad \phi\equiv 1 \, \text{ on }\, B_{R/32h}, \quad \operatorname{supp} \phi \subset B_{R/16h}, \quad h=4\cdot 24\cdot 48.
$$
By H\"older's inequality, we have
$$
\begin{aligned}
\|u\|_{L_\infty(\Omega_{R/16})}&\le |\bar{u}|+\|u-\bar{u}\|_{L_\infty(\Omega_{R/16})}\\
&\le C R^{-d/2}\|u\|_{L_2(\Omega_R)}+ CR^{1-d/q}\|Du\|_{L_q(\Omega_{R/8})},
\end{aligned}
$$
and thus, we get from \eqref{171208@A1} that
$$
\|u\|_{L_\infty(\Omega_{R/16})}\le CR^{-d/2}\|u\|_{L_2(\Omega_R)}+CR^2 \|f\|_{L_\infty(\Omega_R)},
$$
where $C=C(d,\lambda,q,R_0,M_0)$.
Finally, based on a covering argument, using  the above inequality and Theorem \ref{171206@thm1}, we get the desired estimate.
\end{proof}

\section*{Acknowledgments} The authors would like to thank the referees for careful reading and helpful comments.

\bibliographystyle{plain}
\def\cprime{$'$}

\end{document}